\theoremstyle{definition}
\newtheorem{definition}{Definition}[section]
\newtheorem{ex}[definition]{Example}
\newtheorem{rem}[definition]{Remark}
\theoremstyle{plain}
\newtheorem{prop}[definition]{Proposition}
\newtheorem{lem}[definition]{Lemma}
\newtheorem{coro}[definition]{Corollary}
\newtheorem{teo}[definition]{Theorem}
\title[Augmentation of singularities]{Augmentation of singularities: $\mu/\tau$-type conjectures and simplicity}
\author{I. Breva Ribes, R. Oset Sinha}
\date{}
\address{Departament de Matem\`atiques,
Universitat de Val\`encia, Campus de Burjassot, 46100 Burjassot,
Spain}
\email{raul.oset@uv.es}
\email{igbreri@alumni.uv.es}
\thanks{Work of R. Oset Sinha partially supported by Grant PID2021-124577NB-I00 funded by MCIN/AEI/ 10.13039/501100011033 and by ``ERDF A way of making Europe"}
\subjclass[2000]{Primary 58K40; Secondary 58K20, 32S05} \keywords{augmentation, 1-parameter stable unfolding, simplicity of map-germs}
\begin{document}
\begin{abstract}
For function germs $g:(\mathbb C^n,0)\to (\mathbb C,0)$ it is well known that $1\leq\frac{\mu(g)}{\tau(g)}$ and it has recently been proved by Liu that $\frac{\mu(g)}{\tau(g)}\leq n$. We give an upper bound for the codimension of map-germs $f:(\mathbb C^n,0)\to (\mathbb C^p,0)$ given as augmentations of other map-germs with which we prove the analog to the first inequality (known as Mond's conjecture) for augmentations $h:(\mathbb C^n,0)\to (\mathbb C^{n+1},0)$. Furthermore, we show that the quotient given by the image Milnor number and the codimension of any augmentation in the pair of dimensions $(n,n+1)$ is less than $\frac{1}{4}(n+1)^2$ and prove the analog to the second inequality for map-germs with $n=1$ and augmentations with $n=2,3$. 
We then prove a characterization of when a map-germ is an augmentation, finding a counterexample for the characterization given by Houston.
Next, we give sufficient conditions for when the augmentation is independent of the choice of stable unfolding by studying different notions of equivalence of unfoldings.
Moreover, these results allow us to give sufficient conditions for the simplicity of an augmentation, providing context to locate the moduli for non-simple augmentations.
\end{abstract}

\maketitle

\section{Introduction}

Classification of singularities of map-germs $f:(\mathbb K^n,0)\to (\mathbb K^p,0)$ with $\bbk=\bbc$ or $\bbr$ has been an active field of research ever since Whitney classified stable maps from surfaces to the plane. Since then many landmark classifications have taken place, such as Arnold's classification of simple, uni-modal and bi-modal functions (\cite{arnold}) or Mond's classification under the group $\A$ (diffeomorphisms in source and target) of simple singularities of surfaces in 3-space (\cite{mond2en3}). It is also worth mentioning Goryunov's method for corank 1 maps germs with $n\geq p$ (\cite{gor84}), and the classifications of corank 1 simple germs for $n=p=2$ (Rieger in \cite{rieger2to2}), $n=p=3$ (Marar and Tari in \cite{marartari}) and $n=3,p=4$ (Houston and Kirk in \cite{houstonkirk}). Simplicity seems to have been a reasonable criteria to stop classifying singularities when $p>1$. While modality (i.e. lack of simplicity) has been thoroughly studied for functions (see \cite{varchenko,gabrielov}, for example) with striking connections to the topology ($\mu$-constant stratum) and algebraic invariants of the singularities, for the more complicated case of $p>1$ not so much can be said.

In the mentioned classifications for $p>1$, it is interesting to see that most simple germs are augmentations of other germs in lower dimensions.
\begin{definition}
Let $f\colon(\bbk^n,0)\to(\bbk^p,0)$ be a smooth map-germ with a 1-parameter stable unfolding $F(x,\lambda) = (f_\lambda(x),\lambda)$.
Given any smooth function $g\colon(\bbk^d,0)\to(\bbk,0)$, the {\it augmentation of $f$ by $F$ via $g$} will be the $\A$-class of the map-germ in $(\bbk^n\times\bbk^d,0)\to(\bbk^p\times\bbk^d,0)$ given by:
$$A_{F,g}(f)(x,z) = \left(f_{g(z)}(x),z\right)$$
We will say that $f$ is the base or the augmented map-germ, and that $g$ is the augmenting function.
\end{definition}
Understanding the simplicity (or lack of it) of these augmentations is the first step towards understanding modality in the general case. 
In a previous paper (\cite{augcod1morse}) we managed to characterize the simplicity of augmentations in certain cases: if $f$ has codimension 1, then $A_{F,g}(f)(x,z)$ is simple if and only if $g$ is simple, and if $g$ is a Morse function, then $A_{F,g}(f)(x,z)$ is simple if and only if $f$ is simple. 
However, what happens when we augment a germ $f$ of codimension 2 by a non-Morse function is much more subtle. 
For example the augmentation of the codimension 2 germ $(y^2,y^5)$ by the function $g(x)=x^3$ yields the germ $F_4=(x,y^2,y^5+x^3y)$ from Mond's list, which is simple, but the augmentation of the codimension 2 germ $11_5=(x,y^4+x^2y+xy^2)$ from Rieger's list by $g(z)=z^3$ is not simple.

It seems that a crucial part of understanding this phenomenon is analysing the independence of the augmentation from the choice of 1-parameter stable unfolding. While this follows from versality in the codimension 1 case (see \cite{coopermondwik} for $g$ Morse and \cite{augcod1morse} for any $g$ with isolated singularity), it is not trivial for higher codimension. 

On the other hand, some of the biggest open problems in singularities of mappings have to do with the codimension of a germ $f:(\bbc^n,0)\to(\bbc^{n+1},0)$ ($\aecod(f)$) and its image Milnor number $\mu_I(f)$ (the number of spheres in a stable perturbation). For functions $g:(\bbc^n,0)\to (\bbc,0)$ it is well-known that $\tau\leq \mu$ with equality if $g$ is quasihomogeneous. The analog result in mappings is only known for $n=1,2$ (\cite{mondbentwires,mondconj,vanstraten}) and is still open in general (see also \cite{bobadillanunopenafort}).

\smallskip
\emph{Conjecture 1 (Mond's conjecture):} Let $f:(\bbc^n,0)\to(\bbc^{n+1},0)$ be a smooth map germ, then $$\aecod(f)\leq \mu_I(f)$$ with equality if $f$ is quasihomogeneous.
\smallskip

Related to this problem, Liu has recently proved that for functions $g:(\bbc^n,0)\to (\bbc,0)$ we always have $\frac{\mu}{\tau}\leq n$. Obviously this bound is not sharp. Sharp bounds for
$\frac{\mu}{\tau}$ have been found when $n=2,3$ (see \cite{almiron1, almiron2}), although it is not known whether these bounds tend to infinity or not when $n$ tends to infinity. We can think of the analog problem for mappings, which was communicated to the authors by Peñafort Sanchis:

\smallskip
\emph{Conjecture 2:}  Let $f:(\bbc^n,0)\to(\bbc^{n+1},0)$ be a smooth map germ, then: $$\frac{\mu_I(f)}{\aecod(f)}\leq n+1.$$
\smallskip

In this paper we analyze all of these problems for augmentations. Section 2 gives the necessary notation. In \cite{houstonaug2} Houston gave a lower bound for the codimension of an augmentation. In Section 3 we give an upper bound and characterize the equality with the lower bound. Using these bounds and results by Houston (\cite{houstoninventiones}) and Giménez Conejero and Nu\~no-Ballesteros (\cite{gimeneznunoweak}) we prove Mond's conjecture (Conjecture 1) for augmentations (without the hypotheses used by the previous authors). We also give some partial converses for the equality. We then prove Conjecture 2 for any map germ with $n=1$, for any augmentations with $n=2$ and, under some hypothesis, for any augmentation with $n=3$. In general we prove that for augmentations the quotient in Conjecture 2 is less than or equal to $\frac{1}{4}(n+1)^2$.

In Theorem 4.4 of \cite{houstonaug2}, Houston gave a characterization for when a map germ is an augmentation. In Section 4 we construct a counterexample to his theorem and then prove a characterization that needs some extra conditions. We also give a geometric interpretation of this characterization.

Sections 5 to 8 introduce several notions of equivalence between unfoldings and relate them to the simplicity of the augmentations. In Section 5 we prove that the $\phi$-equivalence of 1-parameter stable unfoldings defined by Mancini and Ruas in \cite{manciniruas} implies the $\A$-equivalence of the augmentations and give sufficient conditions for an augmentation to be simple in terms of the number of $\phi$-equivalence classes of the unfoldings. For the case of non-simple augmentations we manage to locate the modality in certain strata of the versal unfolding. In Section 6 we define the notion of weak equivalence between unfoldings and show that it preserves substantiality (see Section 2 for definitions). We also give a sufficient condition for all the unfoldings to be weak equivalent. Then, in Section 7 we show that if all 1-parameter stable unfoldings are weak-equivalent and one of them is substantial, then they are all $\phi$-equivalent.

The equivalences of unfoldings and the sufficient conditions for them to happen, which depend on tangent vector fields to the discriminant of the unfolding, are interesting by themselves and may have applications in other contexts such as deformation theory.

We give plenty of examples throughout the paper which help understand the definitions and scope of the results presented. Moreover, in Section 8 we analyze the simplicity of certain augmentations when $n=p$ and thus give a proof of a conjecture we stated in \cite{augcod1morse} by giving a classification of all corank 1 simple augmentations from $\bbc^4$ to $\bbc^4$.

{{\it Acknowledgements:} The authors thank the Singularity group at the University of València for the constant support.
The authors also thank M. A. S. Ruas for the fruitful discussions and encouragement during their stay at ICMC, where some of the ideas from this paper took form.}

\section{Preliminaries}

Through most of this text, we will be working indistinctly  over $\bbc$ and $\bbr$, denoting any of them by $\bbk$.
In the real case, {\it smooth} will mean infinitely differentiable; in the complex case, it will mean holomorphic.
For $s\in\bbn$, denote by $\ofu_s$ the local ring of germs of smooth functions $(\bbk^s,0)\to(\bbk,0)$ with maximal ideal $\mfr_s$ and $\theta_s = \ofu_s\times\overset{s}\cdots\times\ofu_s$ the $\ofu_s$-module of vector fields in $\bbk^s$.

Let $f\colon(\bbk^n,0)\to(\bbk^p,0)$ be a smooth map-germ: we denote by $\theta(f)$ the $\ofu_n$-module of vector fields along $f$, which can be identified with $\ofu_n\times\overset{p}\cdots\times\ofu_n$.
Recall that $\A = \text{Diff}_0(n)\times\text{Diff}_0(p)$ acts over the set of map-germs from $\bbk^n\to\bbk^p$ with composition in the source and target, being $\text{Diff}_0(s)$ the group of germs of diffeomorphisms $(\bbk^s,0)\to(\bbk^s,0)$.
Similarly, $\R = \text{Diff}_0(n)$ acts on map-germs via composition on the source.
Lastly, $\K$ is the subgroup of $\text{Diff}_0(n+p)$ given by diffeomorphisms of the form $\Phi(x,X) = (\phi(x),\psi(x,X))$ where $\phi\in\text{Diff}_0(n)$ and $\psi(x,0)=0$.
They act on each $\mononp f$ by assigning the map-germ $\psi\left(\phi^{-1}(x),f(\phi^{-1}(x)\right)$.

Associated to $f$ we can define the mappings $tf\colon\theta_n\to\theta(f)$, $\omega f\colon\theta_p\to\theta(f)$, given by $tf(\xi) = df(\xi)$ and $\omega f(\eta) = \eta\circ f$ for each $\xi\in\theta_n$, $\eta\in\theta_p$.
Then, we define the extended $\A$-tangent space and the extended $\A$-normal space as:
\begin{align*}
\tae f &= tf(\theta_n) + \omega f(\theta_p) \\
\nae f &= \frac{\theta(f)}{tf(\theta_n) + \omega f(\theta_p) }
\end{align*}
and the $\A_e$-codimension as $\aecod(f) = \dim_\bbk \nae f$.
We say that $f$ is $\A$-finite if $\aecod(f)<\infty$.
Similarly, for $\K$-equivalence we have:
$$\nke f =\frac{\theta(f)}{T\mathscr K_e f}= \frac{\theta(f)}{tf(\theta_n)+f^*\mfr_p \theta(f)}$$

\begin{definition}
A diagram of the form:
\begin{equation*}
\xymatrix{
(\bbk^{n_2},0) \ar[r]^F & (\bbk^{p_2},0)\\
(\bbk^{n_1},0) \ar[r]^f \ar[u]^j& (\bbk^{p_1},0)\ar[u]^i\\
}
\end{equation*}
is called a {\it transverse fibre square} if $i$ is transverse to $F$ and $(f,j)$ maps $(\bbk^{n_1},0)$ diffeomorphically into the germ at 0 of the submanifold of points $(y,x)\in \bbk^{p_1}\times\bbk^{n_2}$ such that $i(y)=F(x)$.
Here $f$ is called the transverse pull-back of $F$ by $i$ and is denoted $i^*F$.
\end{definition}

The following theorem is due to Damon, see \cite{damonakv}:

\begin{teo}\label{thm_damon}
Assume that we have a diagram:
\begin{equation*}
\xymatrix{
(\bbk^{n+s},0) \ar[r]^F & (\bbk^{p+s},0)\ar[r]^\pi&(\bbk^s,0)\\
(\bbk^{n},0) \ar[r]^f \ar[u]^j& (\bbk^{p},0)\ar[u]^i &\\
}
\end{equation*}
where the left square is a transverse fibre square and $\pi\circ i = 0$.
Then:
$$\nae f \cong \frac{\theta(\pi)}{ d\pi(\operatorname{Derlog}(\Delta F)) + \pi^*\mfr_s\theta(\pi)}$$
where $\operatorname{Derlog}(\Delta F)$ is the $\ofu_{p+1}$-module of vector fields tangent to the discriminant of $F$, $\Delta F$.
\end{teo}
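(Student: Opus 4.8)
The plan is to first invoke Damon's basic theorem on sections of discriminants in order to identify $\nae f$ with the normal space of the section $i$ under the subgroup of contact equivalences that preserve $\Delta F$, and then to carry out an elementary commutative-algebra manipulation — using the hypothesis $\pi\circ i = 0$ — that rewrites this normal space as the quotient in the statement.

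\textbf{Step 1 (passing from $f$ to the section $i$).} Since the left-hand square is a transverse fibre square we have $f = i^*F$, and transversality guarantees that, up to $\A$-equivalence, every small deformation of $f$ is realised by perturbing the section $i$ inside the target $(\bbk^{p+s},0)$ and pulling back $F$. The group acting on such sections that corresponds to $\A$ acting on $f$ is $\K_{\Delta F}$, the subgroup of contact equivalences of $i$ preserving $\Delta F$ in the target; its extended tangent space is $T\K_{\Delta F,e}\cdot i = ti(\theta_p) + i^*\!\big(\operatorname{Derlog}(\Delta F)\big)$, where $i^*\colon\theta_{p+s}\to\theta(i)$ is $\eta\mapsto\eta\circ i$. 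Damon's theorem (\cite{damonakv}) makes this correspondence precise at the infinitesimal level, giving a natural isomorphism
$$\nae f \;\cong\; \frac{\theta(i)}{\,ti(\theta_p) + i^*\!\big(\operatorname{Derlog}(\Delta F)\big)\,}.$$

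\textbf{Step 2 (rewriting the normal space via $\pi$).} Because $\pi$ is a submersion and $\pi\circ i = 0$, the section $i$ parametrises the fibre $\pi^{-1}(0)$, so the ideal of $\operatorname{im}(i)$ in $\ofu_{p+s}$ is $\pi^*\mfr_s\,\ofu_{p+s}$ and therefore $i^*\theta(\pi)\cong\theta(\pi)/\pi^*\mfr_s\theta(\pi)$. Applying $d\pi$ along $i$ and using that $ti$ is injective ($i$ an immersion), $d\pi$ is surjective ($\pi$ a submersion) and $d\pi\circ ti = d(\pi\circ i) = 0$, one gets a split short exact sequence of free $\ofu_p$-modules
$$0 \longrightarrow \theta_p \xrightarrow{\,ti\,} \theta(i) \xrightarrow{\,d\pi\,} i^*\theta(\pi) \longrightarrow 0,$$
so $\theta(i)/ti(\theta_p)\cong i^*\theta(\pi)\cong\theta(\pi)/\pi^*\mfr_s\theta(\pi)$. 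Under this isomorphism $i^*\operatorname{Derlog}(\Delta F)$ corresponds to the image of $d\pi\big(\operatorname{Derlog}(\Delta F)\big)$, because $d\pi$ is $\ofu_{p+s}$-linear and $i^*$ is reduction modulo $\pi^*\mfr_s$, so $d\pi(\eta\circ i) = (d\pi\,\eta)\circ i$ for every $\eta\in\operatorname{Derlog}(\Delta F)$. Passing to quotients,
$$\nae f \;\cong\; \frac{\theta(i)}{ti(\theta_p)+i^*\operatorname{Derlog}(\Delta F)} \;\cong\; \frac{\theta(\pi)}{d\pi\big(\operatorname{Derlog}(\Delta F)\big) + \pi^*\mfr_s\theta(\pi)},$$
which is the asserted formula.

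\textbf{Main obstacle.} Step 2 is pure bookkeeping; all of the substance is in Step 1, i.e. in Damon's identification of $\nae f$ with the $\K_{\Delta F,e}$-normal space of $i$. A self-contained proof would require the characterisation of those vector fields over the target of $F$ that lift to the source as precisely the ones tangent to $\Delta F$ — so that $\operatorname{Derlog}(\Delta F)$ is the correct infinitesimal object — together with the argument that passing between $f$ and the section $i$ neither loses nor creates $\A_e$-deformations, and the regularity this forces on $\Delta F$. These are the delicate points, and I would invoke \cite{damonakv} for them rather than reprove the statement.
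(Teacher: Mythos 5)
The paper gives no proof of this theorem: it is stated as a quotation of Damon's result (``The following theorem is due to Damon, see \cite{damonakv}''), so there is no in-paper argument to compare yours against. Taken on its own merits, your sketch is a correct derivation of the stated formula from Damon's theorem. Step 1 is exactly Damon's identification of $\nae f$ with the $\K_{\Delta F,e}$-normal space of the section $i$, whose extended tangent space is $ti(\theta_p)+i^*\operatorname{Derlog}(\Delta F)$, and you are right that this carries essentially all of the content. Step 2's passage to the $\pi$-quotient is valid bookkeeping: because $\pi\circ i=0$ places $\operatorname{im}(i)$ inside $\pi^{-1}(0)$ and $ti(\theta_p)$ is precisely the sections of $i^*T\pi^{-1}(0)=\ker(d\pi)$, the sequence $0\to\theta_p\xrightarrow{ti}\theta(i)\xrightarrow{d\pi}i^*\theta(\pi)\to 0$ is exact and split, and combining the identification $i^*\theta(\pi)\cong\theta(\pi)/\pi^*\mfr_s\theta(\pi)$ with the compatibility $(d\pi\,\eta)\circ i=d\pi(\eta\circ i)$ maps $i^*\operatorname{Derlog}(\Delta F)$ onto the image of $d\pi(\operatorname{Derlog}(\Delta F))$, giving the asserted quotient. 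One small point: you tacitly assume $i$ is an immersion, which is needed for exactness at $\theta_p$; the theorem statement does not say so explicitly, but it holds in every application the paper makes of it, where $i$ is the canonical inclusion $X\mapsto(X,0)$ for a 1-parameter stable unfolding, so this is not a genuine gap in context. Your honest flag that the deep content is Damon's and is being cited rather than reproven is exactly the right call.
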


\begin{definition}
Let $\mononp f$ be a smooth map-germ, a 1-parameter stable unfolding (OPSU) $F$ of $f$ is a map-germ $F\colon (\bbk^n\times\bbk,0)\to(\bbk^p\times\bbk,0)$ of the form $F(x,\lambda) = (f_\lambda(x),\lambda)$ with $f_0(x) = f(x)$ which is stable as a map-germ.
\end{definition}
We will often denote $\bar f(x,\lambda) = f_\lambda(x)$.
Then $d_x\bar{f}$ will denote the Jacobian matrix of $\bar f$ with respect to the first $n$ variables, and $d_\lambda\bar f$ the Jacobian matrix with respect to the parameter variable.
Note specifically that $d_\lambda\bar f \in \theta(\bar f)$.
We will denote the target variables of $F$ as $(X,\Lambda) = (X_1,\ldots,X_p,\Lambda)$, and the constant vector fields in $\bbk^p\times\bbk$ by $\dpar{}{X_1},\ldots,\dpar{}{X_p},\dpar{}{\Lambda}$.
For more information on 1-parameter stable unfoldings and their relevance, see \cite{notaopsus}.

\begin{definition}
Let $\mononp f$ be a smooth map-germ.
The {\it isosingular locus} of $f$ is the germ at 0 of the set of points $y\in\bbk^p$ such that $f\colon(\bbk^n,f^{-1}(y))\to(\bbk^p,y)$ is $\A$-equivalent to $f$.
This is a well defined germ of a submanifold in $\bbk^p$,  often called the {\it analytic stratum of $f$}, and its tangent space at $0$ is denoted by $\tilde\tau (f)$.
\end{definition}

See Chapter 7, section 7.2 in \cite{nunomond} for more information on the isosingular locus.
For the purposes of this paper, it is sufficient to know that, given $F$ a stable map-germ, the dimension of $\tilde{\tau}(f)$ is equal to the maximum $d\in\bbn$ such that $F\sim_\A H\times Id_d$, for $H$ a stable map-germ.

Recall that the Tjurina number of a function $g\in\ofu_d$ is defined as:
$$\tau(g) = \dim_\bbk\frac{\ofu_d}{Jg + g}$$
where $Jg + g$ is the ideal in $\ofu_d$ generated by the partial derivatives of $g$ and $g$.
We say that a function $g\in\ofu_d$ is quasi-homogeneous if there exist weights $w,w_1,\ldots,w_d\in\bbn$ such that for each $\lambda\in\bbk$:
$$g(\lambda^{w_1}z_1,\ldots,\lambda^{w_d}z_d) = \lambda^wg(z_1,\ldots,z_d)$$
Similarly, a map-germ $\mnp f \bbk$ will be quasi-homogeneous if there exist weights such that each component of $f$ is quasi-homogeneous with respect to those same weights.

\begin{definition}
Given $\mononp f$, a vector field $\eta\in\theta_p$ is said to be liftable if there exists $\xi\in\theta_n$ such that $tf(\xi) = \omega f (\eta)$.
The set of liftable vector fields of $f$ is denoted by $\Lift(f)$. The evaluation at 0 of vector fields in $\Lift(f)$ gives precisely $\tilde\tau (f)$ (see pages 281--283 in \cite{nunomond}).
In many cases, $\operatorname{Lift}(f) = \operatorname{Derlog}(\Delta f)$; in particular, this holds when $f$ is stable.
\end{definition}

If $\pi\colon(\bbk^{p}\times\bbk,0)\to(\bbk,0)$ is the canonical projection given by $\pi(X,\Lambda) = \Lambda$, then the differential $d\pi$ projects vector fields in $\theta_{p+1}$ to their last component.
In particular, if $M\subseteq \theta_{p+1}$ is an $\ofu_{p+1}$-module, $d\pi(M)$ is the ideal in $\ofu_{p+1}$ generated by all the functions in the last component of the vector fields in $M$.

\begin{definition}\label{def_degsus}
Let $F$ be an OPSU of $\mononp f$, we define the {\it degree of substantiality} of $F$ as the minimum $m\in\bbn$ such that $\Lambda^m\in d\pi(\Lift(F))$.
We denote it by $\delta(F) = m$.
If there is no such minimum, we write $\delta(F) = \infty$ 
\end{definition}

%

The last definition aims to generalize the concept of {\it substantial unfolding} defined by Houston in \cite{houstonaug2}: we say in particular that an OPSU is substantial if it has degree of substantiality equal to $1$.
Notice that any quasi-homogeneous OPSU is substantial, since in this case the Euler vector field $\sum_{i=1}^p w_iX_i\dpar{}{X_i} +w_{p+1}\Lambda\dpar{}{\Lambda}$ for some weights $w_1,\ldots,w_p,w_{p+1}\in\bbn$ is liftable.

In particular, if $f$ is quasi-homogeneous and admits an OPSU, we can find a vector field $\gamma\in\theta(f)$ so that $(f(x) +\lambda\gamma(x),\lambda)$ is stable and quasi-homogeneous.
This is due to the fact that this $\gamma$ will be the only non-constant $\bbk$-generator of the extended $\K$-normal space,
and so $\gamma$ in this space will always lie on the class of a vector field with all components equal to 0 except for one with a single monomial in $x_1,\ldots,x_n$. 
Given a representative $\gamma$ in this form, if $f$ is quasi-homogeneous, $(f(x) +\lambda\gamma(x),\lambda)$ will also be quasi-homogeneous.

\begin{rem}\label{rem_subs}
Let $F(x,\lambda) = (f_\lambda(x),\lambda)$ be a substantial unfolding of $f$ (i.e., $\delta(F) = 1$), then there exist $\eta(X,\Lambda) = (\eta_1(X,\Lambda),\ldots,\eta_p(X,\Lambda),\Lambda)\in\text{Lift}(F)$ and $\xi = (\xi_1,\ldots,\xi_{n+1})\in\theta_{n+1}$ such that $dF(\xi) = \eta\circ F$.
Computing $dF$, we see that this implies $\xi_{n+1}(x,\lambda) = \lambda$.

Then, by the Thom-Levine lemma (see Lemma 2.6 from \cite{nunomond} or Proposition 4.12 in \cite{ruas_resumen}), we can integrate these vector fields to find families of diffeomorphisms $\psi_t,\phi_t$ such that $\psi_t\circ F = F\circ\phi_t$ for all $t\in\bbk$ small enough, $\phi_0 = \id_{n+1},\psi_0 = \id_{p+1}$ and $\dpar{\psi_t}{t} = \eta\circ \psi_t, \dpar{\phi_t}{t} = \xi\circ\phi_t$.

In particular, if $\psi_t =(\psi_t^1,\ldots,\psi_t^{p+1})$, we can deduce from the last condition that $\dpar{\psi_t^{p+1}}{t} =\eta^{p+1}\circ\psi_t=\psi_t^{p+1}$, and so solving the differential equation we can consider $\psi_t^{p+1}(X,\Lambda) = \Lambda e^{t}$.
Similarly, we can take $\phi_t(x,\lambda) = (\phi_t^1(x,\lambda),\ldots,\phi_t^{n}(x,\lambda),\lambda e^t)$.
Moreover, for each $t\in\bbk$ small enough, the inverse of $\phi_t$ is also a diffeomorphism of the form $\phi_t^{-1} = (\phi_t^{-1,1},\ldots,\phi_t^{-1,n},\lambda e^{-t})$.

These diffeomorphisms, which maintain the parameter variable on the last component, will be of importance in later sections. In particular, in Proposition \ref{propo_weak_subs} we show that they preserve the property of an unfolding being substantial.
Diffeomorphisms $(\Phi,\Psi)\in\A$ such that $\Psi\circ F = F\circ \Phi$ form a subgroup called the {\it isotropy group} or {\it stabilizer}, and is often denoted by $\A_F$.
If $(\Phi,\Psi)\in\A$ are diffeomorphisms such that $G \circ \Phi = \Psi\circ F$, then $\A_G = (\Phi,\Psi)\cdot \A_F \cdot (\Phi^{-1},\Psi^{-1})$.

\end{rem}

We now present some known results on augmentations.
In \cite{augcod1morse} we proved that in some cases, the resulting augmentation is independent of the chosen OPSU.
\begin{prop}\label{prop_opsu}
Assume $\aecod f = 1$ and $g$ is $\R$ equivalent to a quasi-homogeneous function with isolated singularity.
Given any OPSUs $F$ and $F'$ of $f$:
$$A_{F,g}(f)\sim_\A A_{F',g}(f)$$
\end{prop}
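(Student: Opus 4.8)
The plan is to show that, when $\aecod f=1$, any two OPSUs of $f$ differ only by a reparametrization of the unfolding parameter, and then to absorb that reparametrization using the hypothesis on $g$. First, since an OPSU of $f$ has exactly $\aecod f=1$ parameter, it is automatically an $\A_e$-versal unfolding of $f$, because a stable unfolding with exactly $\aecod f$ parameters is versal. By uniqueness of versal unfoldings with a prescribed number of parameters, two OPSUs $F,F'$ of $f$ are related by an isomorphism of unfoldings; spelling this out, there are $\alpha\in\text{Diff}_0(1)$ and families of diffeomorphism germs $\phi_\mu\in\text{Diff}_0(n)$, $\psi_\mu\in\text{Diff}_0(p)$ such that, writing $\theta_\mu=\phi_\mu^{-1}$,
\[
\bar{f'}(x,\lambda)=\psi_{\alpha^{-1}(\lambda)}\!\left(\bar f\!\left(\theta_{\alpha^{-1}(\lambda)}(x),\ \alpha^{-1}(\lambda)\right)\right).
\]

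Next I would substitute $\lambda=g(z)$ in this identity. Setting $\Phi_g(x,z)=\bigl(\theta_{\alpha^{-1}(g(z))}(x),z\bigr)$ and $\Psi_g(X,z)=\bigl(\psi_{\alpha^{-1}(g(z))}(X),z\bigr)$ — germs of diffeomorphisms of $(\bbk^n\times\bbk^d,0)$ and $(\bbk^p\times\bbk^d,0)$ respectively, since their differentials at the origin are block triangular with the invertible blocks $d\theta_0(0)$, $d\psi_0(0)$ and identity blocks — a direct substitution gives $A_{F',g}(f)=\Psi_g\circ A_{F,\,\alpha^{-1}\circ g}(f)\circ\Phi_g$, hence $A_{F',g}(f)\sim_\A A_{F,\,\alpha^{-1}\circ g}(f)$. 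Also, if $g=g_0\circ\rho$ with $\rho\in\text{Diff}_0(d)$, then $(x,z)\mapsto(x,\rho(z))$ and $(X,z)\mapsto(X,\rho(z))$ conjugate $A_{F,g}(f)$ to $A_{F,g_0}(f)$ (and similarly with $\alpha^{-1}\circ g$ in place of $g$), so we may assume $g$ itself is quasi-homogeneous with isolated singularity; it then remains only to prove $\alpha^{-1}\circ g\sim_\R g$. For this I would establish the general fact that, for $g$ quasi-homogeneous of degree $w$ with weights $w_1,\dots,w_d$ and for any $\beta\in\text{Diff}_0(1)$, one has $\beta\circ g\sim_\R g$: write $\beta(s)=s\,u(s)$ with $u(0)=\beta'(0)\neq0$, choose a germ $h$ with $h(s)^w=u(s)$ (possible since $u(0)\neq0$: automatic over $\bbc$, and over $\bbr$ one treats the sign of $\beta'(0)$ separately), and set $\Theta(z)=\bigl(h(g(z))^{w_1}z_1,\dots,h(g(z))^{w_d}z_d\bigr)$; one checks $\Theta\in\text{Diff}_0(d)$ (its differential at $0$ is the diagonal matrix with entries $h(0)^{w_i}$) and, by quasi-homogeneity, $g\circ\Theta=h(g)^w g=u(g)\,g=\beta\circ g$. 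Taking $\beta=\alpha^{-1}$ and chaining all the equivalences yields $A_{F',g}(f)\sim_\A A_{F,g}(f)$.

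The step I expect to be the main obstacle is making the first two paragraphs precise: one must pin down exactly what data an isomorphism of the two OPSUs provides — in particular that it covers a diffeomorphism of the parameter line, with no mixing of the parameter into the source or target coordinates — and then verify that after specialising $\lambda=g(z)$ the maps $\Phi_g,\Psi_g$ really are germs of diffeomorphisms. I would also stress that the quasi-homogeneity of $g$ is used precisely to absorb the reparametrization $\alpha$: without it the conclusion can fail, so no unconditional statement is to be expected.
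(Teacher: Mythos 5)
Your argument is correct, and at the structural level it matches the mechanism the paper uses (the proposition is imported from \cite{augcod1morse}, and the paper re-derives a strengthening of it as Corollary \ref{coro_opsu} by combining the versality-in-codimension-one observation with Theorem \ref{thm_phi_aug}): codimension one forces every OPSU of $f$ to be $\A_e$-versal, so any two are isomorphic as unfoldings after a reparametrization $\alpha$ of the parameter line; substituting $\lambda=g(z)$ into that isomorphism yields diffeomorphisms exhibiting $A_{F',g}(f)\sim_\A A_{F,\alpha^{-1}\circ g}(f)$; and one concludes by showing $\alpha^{-1}\circ g\sim_\R g$ and invoking Proposition \ref{prop_requiv_augm_aequiv}.

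The one step where you genuinely diverge is the last one. The paper's route (in the proof of Theorem \ref{thm_phi_aug}) observes that $\alpha^{-1}\circ g=u(g)\cdot g$ for a unit $u$, so the two functions are $\K$-equivalent, and then invokes Proposition \ref{prop_tak_requiv} (Saito--Dimca--Takahashi: $\K$-equivalence to a quasi-homogeneous germ implies $\R$-equivalence) to conclude. You instead give a self-contained construction: choose $h$ with $h^w=u$ and set $\Theta(z)=\bigl(h(g(z))^{w_1}z_1,\dots,h(g(z))^{w_d}z_d\bigr)$, then verify $\Theta\in\operatorname{Diff}_0(d)$ and $g\circ\Theta=u(g)\,g=\alpha^{-1}\circ g$. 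This is more elementary than the black-box $\K\Rightarrow\R$ theorem: it exploits that the unit here has the special form $u(g(z))$, for which the Euler vector field of $g$ can be integrated explicitly, and it makes transparent exactly where quasi-homogeneity is used. The trade-off is that it is less robust in one respect you already flag: over $\bbr$ a $w$-th root of $u$ need not exist when $\alpha'(0)$ has the wrong sign and $w$ is even, which is precisely the ``$g\sim_\R g'$ or $g\sim_\R -g'$'' caveat in Proposition \ref{prop_tak_requiv}, so over $\bbr$ one should state the conclusion with the same finite ambiguity. With that caveat, the proof is sound, and the reduction to quasi-homogeneous $g$ and the block-triangular check that $\Phi_g,\Psi_g$ are germs of diffeomorphisms are both handled correctly.
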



Also in \cite{augcod1morse} we proved that, given a fixed OPSU $F$, the $\A$-class of the augmentation only depends on the $\R$-class of the augmenting function:
\begin{prop}\label{prop_requiv_augm_aequiv}
If $g$ is $\R$-equivalent to $g'$, then:
$$A_{F,g}(f) \sim_\A A_{F,g'}(f)$$
\end{prop}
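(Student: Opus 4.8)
The plan is to show that an $\R$-equivalence $g' = g\circ\rho$ for some $\rho\in\text{Diff}_0(d)$ induces an $\A$-equivalence between the two augmentations by ``transporting'' $\rho$ into the source and target diffeomorphisms. First I would write out the two map-germs explicitly: with $F(x,\lambda) = (f_\lambda(x),\lambda)$ the fixed OPSU, we have $A_{F,g}(f)(x,z) = (f_{g(z)}(x),z)$ and $A_{F,g'}(f)(x,z) = (f_{g'(z)}(x),z) = (f_{g(\rho(z))}(x),z)$. The natural guess is to take the source diffeomorphism $\Phi(x,z) = (x,\rho(z))$ and the target diffeomorphism $\Psi(X,w) = (X,\rho(w))$, both of which are germs of diffeomorphisms fixing $0$ since $\rho$ is.

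The key computation is then simply to check the commuting relation $\Psi\circ A_{F,g'}(f) = A_{F,g}(f)\circ\Phi$. Carrying the composition through: $A_{F,g}(f)(\Phi(x,z)) = A_{F,g}(f)(x,\rho(z)) = (f_{g(\rho(z))}(x),\rho(z))$, while $\Psi(A_{F,g'}(f)(x,z)) = \Psi(f_{g(\rho(z))}(x),z) = (f_{g(\rho(z))}(x),\rho(z))$. These agree, so the diagram commutes and the two augmentations are $\A$-equivalent. This is essentially a one-line verification once the right diffeomorphisms are identified.

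I do not expect any genuine obstacle here: the augmentation construction only couples the augmenting function $g$ to the unfolding parameter $\lambda$ through the substitution $\lambda = g(z)$, so precomposing $g$ with a source diffeomorphism $\rho$ is ``absorbed'' by acting with $\rho$ simultaneously on the $z$-block of the source and the $w$-block of the target, leaving the $x$- and $X$-blocks untouched. The only point that deserves a word is that the $\A$-class $A_{F,g}(f)$ is by definition an $\A$-equivalence class, so it suffices to exhibit one representative in each class related by an element of $\A$; the representatives chosen above do the job. One might also remark that $\Phi$ and $\Psi$ preserve the product structure $(\bbk^n\times\bbk^d,0)\to(\bbk^p\times\bbk^d,0)$, which is consistent with the augmentation living in these dimensions, but this is not needed for the proof.
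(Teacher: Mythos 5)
Your proof is correct, and since the paper only cites its earlier work \cite{augcod1morse} for this proposition, the explicit verification you give is exactly the expected one: the diffeomorphisms $\Phi(x,z)=(x,\rho(z))$ and $\Psi(X,w)=(X,\rho(w))$ satisfy $\Psi\circ A_{F,g'}(f) = A_{F,g}(f)\circ\Phi$, which is precisely an $\A$-equivalence. The one-line computation is right and there is no gap.
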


To finish the section, and for the sake of completion, we recall the definition of $\A$-simple map-germ, which can be extended analogously to $\R$-simplicity (and $\K$-simplicity) of functions:

\begin{definition}
A smooth $f\colon(\bbk^n,0)\to(\bbk^p,0)$ is $\A$-simple if there exist a finite number of $\A$-equivalence classes such that, if the versal unfolding of $f$ admits a representative $F\colon U\to V $ with $U\subseteq \bbk^n\times\bbk^d, V\subseteq\bbk^p\times\bbk^d$, of the form $F(x,\lambda) = (f_\lambda(x),\lambda)$, for each $(y,\lambda)\in V$ and each $(x_1,\lambda),\ldots,(x_s,\lambda)\in U\times V$ with $F(x_i,\lambda) = (y,\lambda)$ the map-germ $f_\lambda\colon(\bbk^n,\lbrace x_1,\ldots,x_s\rbrace)\to(\bbk^p,y)$ lies in one of those classes.
\end{definition}

\section{Codimension and Image Milnor number}

For the purposes of this section, we will consider only the complex analytic case, $\bbk = \bbc$.
In \cite{houstonaug2}, Houston proved the following inequality:
$$\aecod(A_{F,g}(f))\geq \aecod(f)\tau(g)$$
with equality if $g$ is quasi-homogeneous or $F$ is substantial. 
In fact he introduced there the concept of {\it substantiality} and related it to this result.

The goal of this section is to find an upper bound for the codimension of an augmentation, which will help us tackle Conjectures 1 and 2 for the case of augmentations.
%
%

\begin{definition}
The Briançon-Skoda exponent of a smooth function $g\colon(\bbk^d,0)\to(\bbk,0)$ is defined as the minimum positive integer $r$ such that $g^r \in Jg$.
We denote it by $BS(g)=r$.
\end{definition}

The fact that the Briançon-Skoda number is well defined comes from \cite{briskoda}, where it is proven that $BS(g) \leq d$ for every $g\in\ofu_d$.

We now want to provide an upper bound to the codimension of an augmentation, and extend the conditions of Houston's theorem to provide an if and only if condition on the equality of the lower bound.
Our setting is the following: let $f\colon(\bbc^n,0)\to(\bbc^p,0)$ be a complex analytic map-germ that admits an OPSU $F(x,\lambda) = (f_\lambda(x),\lambda)$.
If $\pi\colon(\bbc^{p+1},0)\to(\bbc,0)$ is the projection $\pi(X,\Lambda)= \Lambda$, then $d\pi(\Lift(F))$ is the ideal in $\ofu_{p+1}$ generated by the last components of the liftable vector fields of $F$.
By Damon's Theorem \ref{thm_damon} we have that:
$$\aecod(f) = \dim_\bbc\frac{\ofu_{p+1}}{d\pi(\Lift(F))+\langle \Lambda\rangle}$$
where $\langle \Lambda \rangle$ is the ideal generated by $\Lambda$ in $\ofu_p+1$.
Hence, if $k=\aecod(f)$, for $i=1,\ldots,k$, we can find $h_i\in \ofu_{p+1}$ such that:
$$\frac{\ofu_{p+1}}{d\pi(\Lift(F))+\langle \Lambda\rangle} = \linsp{\bbc}{[h_1(X,\Lambda)],\ldots,[h_k(X,\Lambda)]}$$
where $[h_i]$ are the classes in the quotient of each $h_i$.
Moreover by Hadamard's Lemma we can find for each $i$ some $\tilde{h_i}\in\ofu_{p+1}$ such that $h_i(X,\Lambda) = h_i(X,0) + \Lambda \tilde{h_i}(X,\Lambda)$.
Since $\Lambda\tilde{h_i} \in \langle \Lambda\rangle$, we have that $[h_i(X,\Lambda)] = [h_i(X,0)]$.
That is, we can take a basis of this quotient with representatives that do not depend on $\Lambda$.
We will write this basis from now on as $h_1(X),\ldots,h_k(X)$.

Consider $g\colon(\bbk^d,0)\to(\bbk,0)$ a smooth function such that:
$$\mu(g) = \dim_\bbc\frac{\ofu_d}{Jg} < \infty$$
where $Jg$ is the ideal in $\ofu_d$ generated by the partial derivatives of $g$.
The number $\mu(g)$ is called the Milnor number of $g$, and it is finite precisely when $g$ is finitely $\R$-determined.
There exist some $m_1,\ldots,m_r\in\ofu_d$  with $r= \mu(g)$ such that:
$$\frac{\ofu_d}{Jg} = \linsp{\bbc}{m_1(Z),\ldots,m_r(Z)}$$
Then:
$$\frac{\ofu_{p+1}}{d\pi(\Lift(F))+\langle \Lambda\rangle}\otimes_\bbc\frac{\ofu_d}{Jg} = \linsp{\bbc}{h_i(X)\otimes_\bbc m_j(Z)}_{\substack{i=1,\ldots,k\\ j=1,\ldots,r}}$$
In the analytic case:
\begin{equation*}
\frac{\ofu_{p+1+d}}{d\pi(\Lift(F))+\langle \Lambda\rangle + Jg} \cong \frac{\ofu_{p+1}}{d\pi(\Lift(F))+\langle \Lambda\rangle}\otimes_\bbc\frac{\ofu_d}{Jg}
\end{equation*}
Hence, via the isomorphism that multiplies monomials:
\begin{equation}\label{eq_decomp}
\frac{\ofu_{p+1+d}}{d\pi(\Lift(F))+\langle \Lambda\rangle + Jg} = \linsp{\bbc}{h_i(X)m_j(Z)}_{\substack{i=1,\ldots,k\\ j=1,\ldots,r}}
\end{equation}

Now, Houston proved in \cite{houstonaug2} that the $\A_e$ codimension of the augmentation of $f$ by $F$ via $g$ is given by the dimension as a $\bbc$-vector space of the quotient:
\begin{equation}\label{eq_ideal_aecod}
B = \frac{\ofu_{p+1+d}}{d\pi(\Lift(F))+\langle \Lambda-g\rangle + Jg}
\end{equation}
where $\langle \Lambda-g\rangle$ is the ideal generated by the function $\Lambda-g(Z)$.
We will now see that this dimension is leser than or equal to $kr = \aecod(f)\mu(g)$.
For $\alpha\in\ofu_{p+d+1}$, denote by $[\alpha]_B$ the class of $\alpha$ in $B$.
We will proceed with all this setting and notation for the following two results.
\begin{lem}\label{lemma_prep_class}
For each $\alpha \in \ofu_{p+d+1}$, there exist unique $\alpha_{ij}\in\bbc$ and some $b\in\ofu_{p+d+1}$ such that:
$$[\alpha]_B = \left[\sum_{i=1}^k\sum_{j=1}^r h_i(X)m_j(Z)\alpha_{ij} + g(Z)b(X,Z,\Lambda)\right]_B$$
\end{lem}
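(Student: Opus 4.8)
The plan is to read the statement off the tensor decomposition (\ref{eq_decomp}), replacing in it the ideal $\langle\Lambda\rangle$ by the ideal $\langle\Lambda-g\rangle$ that defines $B$ in (\ref{eq_ideal_aecod}); the only price paid in that exchange is a multiple of $g$.

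Concretely, I would fix $\alpha\in\ofu_{p+d+1}$ and use that, by (\ref{eq_decomp}), the classes $h_i(X)m_j(Z)$ with $1\le i\le k$ and $1\le j\le r$ form a $\bbc$-basis of $\ofu_{p+d+1}/(d\pi(\Lift(F))+\langle\Lambda\rangle+Jg)$. Hence there are unique scalars $\alpha_{ij}\in\bbc$ and elements $u\in d\pi(\Lift(F))\,\ofu_{p+d+1}$, $v\in\ofu_{p+d+1}$ and $q\in Jg\,\ofu_{p+d+1}$ with
$$\alpha=\sum_{i=1}^{k}\sum_{j=1}^{r}\alpha_{ij}\,h_i(X)m_j(Z)+u+\Lambda v+q .$$
I would then split $\Lambda v=(\Lambda-g)v+g(Z)v$. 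The three elements $u$, $q$ and $(\Lambda-g)v$ all lie in the ideal $d\pi(\Lift(F))+\langle\Lambda-g\rangle+Jg$ of (\ref{eq_ideal_aecod}), so passing to classes in $B$ kills them and leaves exactly
$$[\alpha]_B=\Bigl[\sum_{i=1}^{k}\sum_{j=1}^{r}h_i(X)m_j(Z)\,\alpha_{ij}+g(Z)\,b(X,Z,\Lambda)\Bigr]_B$$
with $b:=v$. This is the asserted expression, and the $\alpha_{ij}$ are the (unique) coordinates of $\alpha$ in the basis furnished by (\ref{eq_decomp}), which settles the uniqueness.

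The one thing to be careful with is bookkeeping rather than a genuine obstacle: the ideals $d\pi(\Lift(F))$, $\langle\Lambda\rangle$ and $Jg$ that appear in (\ref{eq_ideal_aecod}) and (\ref{eq_decomp}) are generated in the different subrings $\ofu_{p+1}$, $\ofu_{p+1}$ and $\ofu_d$, and they must be read consistently as their extensions to $\ofu_{p+d+1}$ so that the decomposition (\ref{eq_decomp}) and the identity $\Lambda v=(\Lambda-g)v+gv$ can be used simultaneously; once that is set up, the only algebraic input is the trivial $\Lambda=(\Lambda-g)+g$. I would present this as the one-step form of a reduction, and note that it is exactly what makes the next result work: since $g\in\mfr_d^{2}$ the class $[g]_B$ is nilpotent in $B$, so applying the lemma again to $b$ finitely many times replaces $[\alpha]_B$ by a $\bbc$-combination of the $[h_i(X)m_j(Z)]_B$, whence $\dim_\bbc B\le kr=\aecod(f)\,\mu(g)$.
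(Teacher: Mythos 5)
Your argument is correct and is essentially the paper's proof: both decompose $\alpha$ via the basis from (\ref{eq_decomp}) as $\sum\alpha_{ij}h_im_j+(\text{element of }d\pi(\Lift(F)))+\Lambda v+(\text{element of }Jg)$ and then rewrite $\Lambda v=(\Lambda-g)v+gv$ before passing to classes in $B$, taking $b=v$. The only difference is the closing aside: the paper does not appeal to nilpotence of $[g]_B$ but instead terminates the iteration after $q=\min\{BS(g),\delta(F)\}$ steps by showing $g^q$ lies in the defining ideal of $B$ (and your stated reason, $g\in\mfr_d^2$, is not by itself what gives nilpotence — rather $g\in\mfr_d$ together with $\mfr_d^N\subseteq Jg$ from the isolated singularity, or directly $g^{BS(g)}\in Jg$).
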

\begin{proof}
By Equation \ref{eq_decomp}, we can find unique $\alpha_{ij}\in\bbc$ and some $\eta\in d\pi(\Lift(f)), b\in\ofu_{p+d+1}$ and $c\in Jg$ such that we can decompose $\alpha$ as:
$$\alpha(X,Z,\Lambda) = \sum_{i=1}^k\sum_{j=1}^r h_i(X)m_j(Z)\alpha_{ij} + \eta(X,Z,\Lambda) + \Lambda b(X,Z,\Lambda) + c(X,Z,\Lambda)$$
Now, add and substract $g(Z)b(X,Z,\Lambda)$ on the right-hand side to get:
\begin{multline*}
\alpha(X,Z,\Lambda) = \sum_{i=1}^k\sum_{j=1}^r h_i(X)m_j(Z)\alpha_{ij} + g(Z)b(X,Z,\Lambda) \\
+ \eta(X,Z,\Lambda) + (\Lambda - g(Z)) b(X,Z,\Lambda) + c(X,Z,\Lambda)
\end{multline*}
Taking classes in $B$ we obtain the desired result.
\end{proof}

\begin{teo}\label{thm_ineq_iff}

Let $ f\colon(\bbc^n,0)\to(\bbc^p,0)$ be a smooth map-germ with an OPSU $F(x,\lambda) = (f_\lambda(x),\lambda)$ and $g\colon(\bbc^d,0)\to(\bbc,0)$ a smooth function with isolated singularity.
Then:
$$\aecod(f)\tau(g)\leq \aecod(A_{F,g}(f))\leq \aecod(f)\mu(g)$$
Moreover, the lower bound is an equality if and only if $g$ is quasi-homogeneous or $F$ is substantial. If $g$ is quasi-homogeneous, the upper bound is also an equality.
\end{teo}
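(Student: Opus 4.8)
The plan is to work with the ideal $B$ of Equation \ref{eq_ideal_aecod}, whose $\bbc$-dimension equals $\aecod(A_{F,g}(f))$ by \cite{houstonaug2}, and to prove the two inequalities separately, reading off the equality statements from the same computations. The isolated-singularity hypothesis on $g$ guarantees $\mu(g)<\infty$, so that Equation \ref{eq_decomp} and Lemma \ref{lemma_prep_class} apply and that $Jg$ is $\mfr_d$-primary, whence the Briançon-Skoda theorem gives $g^d\in Jg$. For the upper bound I would read Lemma \ref{lemma_prep_class} as the statement that $B=V+gB$ as $\bbc$-vector spaces, where $V$ is the span of the at most $kr=\aecod(f)\mu(g)$ classes $[h_i(X)m_j(Z)]_B$ and $gB$ is the ideal of $B$ generated by the class of $g$. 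The key point is that $g\cdot V\subseteq V$: writing $g(Z)m_j(Z)=\sum_l c_{jl}m_l(Z)+w_j(Z)$ with $c_{jl}\in\bbc$ and $w_j\in Jg$ (possible since the $m_l$ span $\ofu_d/Jg$), one gets $gh_im_j=\sum_l c_{jl}h_im_l+h_iw_j$ with $h_iw_j\in Jg$, so $[gh_im_j]_B=\sum_l c_{jl}[h_im_l]_B\in V$. Iterating $B=V+gB$ then gives $B=V+g^sB$ for every $s$, and since $g^d\in Jg$ we have $g^dB=0$, hence $B=V$ and $\aecod(A_{F,g}(f))\le kr$. If moreover $g$ is quasi-homogeneous, the Euler relation gives $g\in Jg$, so $\langle\Lambda-g\rangle+Jg=\langle\Lambda\rangle+Jg$ and $B$ coincides with the quotient of Equation \ref{eq_decomp}, of dimension exactly $kr$; this gives equality in the upper bound, and since $\tau(g)=\mu(g)$ in this case, also in the lower bound.

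For the lower bound I would not merely quote Houston but extract the equality criterion at the same time. Dividing $B$ by the ideal generated by the class of $g$ gives a surjection $B\twoheadrightarrow B/gB$, and since $\langle\Lambda-g\rangle+\langle g\rangle=\langle\Lambda\rangle+\langle g\rangle$ we get $B/gB\cong\ofu_{p+1+d}/(d\pi(\Lift(F))+\langle\Lambda\rangle+Jg+\langle g\rangle)$, which splits as $(\ofu_{p+1}/(d\pi(\Lift(F))+\langle\Lambda\rangle))\otimes_\bbc(\ofu_d/(Jg+\langle g\rangle))$, of dimension $\aecod(f)\,\tau(g)$ by Theorem \ref{thm_damon}. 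Hence $\aecod(A_{F,g}(f))\ge\aecod(f)\,\tau(g)$, with equality precisely when this surjection is an isomorphism, i.e. when $g\in I:=d\pi(\Lift(F))+\langle\Lambda-g\rangle+Jg$.

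It remains to show that $g\in I$ forces $g$ to be quasi-homogeneous or $F$ to be substantial; the converse is immediate, since $g\in Jg$ puts $g\in I$ and $\Lambda\in d\pi(\Lift(F))$ gives $g=\Lambda-(\Lambda-g)\in I$. One may assume $f$ is not stable (if it is, Damon's formula forces $d\pi(\Lift(F))$ to contain a unit, so $F$ is substantial and both $g\in I$ and the equality hold trivially), whence $d\pi(\Lift(F))\subseteq\mfr_{p+1}$. Write $g=\eta+(\Lambda-g)\beta+c$ with $\eta\in d\pi(\Lift(F))$, $\beta\in\ofu_{p+1+d}$, $c\in Jg$, equivalently $g(1+\beta)=\eta+\Lambda\beta+c$. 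If $\beta(0)\ne-1$, then $1+\beta$ is a unit; multiplying by its inverse and setting $\Lambda=0$ puts $g(Z)$ into the ideal $\langle\ell_i(X,0)\rangle_i+Jg$ of $\ofu_{p+d}$, where the $\ell_i(X,\Lambda)$ generate $d\pi(\Lift(F))$. Since $\ell_i(0,0)=0$, in the splitting $\ofu_{p+d}/(\langle\ell_i(X,0)\rangle+Jg)\cong\ofu_p/\langle\ell_i(X,0)\rangle\otimes_\bbc\ofu_d/Jg$ the class of $1$ in the first factor is nonzero, so the class of $g$ in $\ofu_d/Jg$ must vanish; hence $g\in Jg$, i.e. $g$ is (up to $\R$-equivalence, by Proposition \ref{prop_requiv_augm_aequiv}) quasi-homogeneous. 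If instead $\beta(0)=-1$, write $\beta=-1+\beta'$ with $\beta'\in\mfr_{p+1+d}$; then $\Lambda=\eta+c+\beta'(\Lambda-g)$, and setting $Z=0$ kills $c$ (because $c\in Jg$ and $dg(0)=0$) and yields $\Lambda(1-\beta'(X,0,\Lambda))=\eta(X,\Lambda)$; as $1-\beta'(X,0,\Lambda)$ is a unit in $\ofu_{p+1}$ this shows $\Lambda\in d\pi(\Lift(F))$, i.e. $F$ is substantial.

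The main obstacle is this last step. The upper bound is essentially formal once Lemma \ref{lemma_prep_class} and Briançon-Skoda are in hand, and the lower bound together with the ``easy'' direction of its equality comes straight from Theorem \ref{thm_damon}; but showing that $g\in I$ implies one of the two structural conditions requires the case split on the value $\beta(0)$ and the two different specialisations $\Lambda=0$ and $Z=0$ — in particular using $dg(0)=0$ to discard the $c$-term and using that $f$ is not stable to keep the generators $\ell_i$ inside $\mfr_{p+1}$.
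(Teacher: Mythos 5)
Your proof is correct and follows essentially the same route as the paper's: the same ideal $B$, the same tensor-product decomposition, and Lemma~\ref{lemma_prep_class} plus Briançon--Skoda for the upper bound. Two points are nevertheless worth flagging. First, your packaging of the upper bound as $B = V + gB$ with $gV \subseteq V$ and $g^dB = 0$ is a cleaner way to run the same finite iteration the paper writes out with explicit coefficients $\alpha^l_{ij},\beta_s^{jl}$; the paper additionally lets the iteration stop at $q=\min\{BS(g),\delta(F)\}$, noting that substantiality can also kill $g^q$, whereas you rely only on Briançon--Skoda, which suffices. Second, and more substantively, your ``only if'' argument for the lower-bound equality is fuller than the paper's: the paper reduces to showing $\Lambda$ lies in the ideal $d\pi(\Lift(F))+\langle\Lambda-g\rangle+Jg$ and then simply asserts that this forces $g\in Jg$ or $\Lambda\in d\pi(\Lift(F))$, while you make this rigorous by writing $g(1+\beta)=\eta+\Lambda\beta+c$, splitting on whether $1+\beta$ is a unit, and specialising $\Lambda=0$ (using that $f$ non-stable keeps the generators of $d\pi(\Lift(F))$ in $\mfr_{p+1}$) in one case and $Z=0$ (using $g(0)=0$, $dg(0)=0$) in the other. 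You also rederive the lower bound via the surjection $B\twoheadrightarrow B/gB$ instead of citing Houston, but that is the same computation made explicit. No gaps.
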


\begin{proof}

The lower bound is given in \cite{houstonaug2}, along with the {\it if} condition of the equality.
For the {\it only if} condition, in the cited proof it is seen that:
$$\aecod (A_{F,g}(f)) = \dim_\bbc\frac{\ofu_{p+1+d}}{d\pi(\Lift(F)) + \langle \Lambda- g(Z)\rangle + Jg}$$
Notice that the vector fields in $\Lift(F)$ only depend on $X,\Lambda$, and the functions in $Jg$ only depend on $Z$.
The following inclusion holds:
$$d\pi(\Lift(F)) + \langle \Lambda- g(Z)\rangle + Jg \subseteq d\pi(\Lift(F)) + \langle \Lambda, g(Z)\rangle + Jg$$
The ideal in the right-hand side of the inclusion has codimension over $\ofu_{p+1+d}$ as a $\bbc$-vector space equal to $\aecod(f)\tau(g)$.

If we assume $\aecod(A_{F,g}(f)) = \aecod(f)\tau(g)$ then both ideals have the same codimension, and since one is contained in the other they must be equal.
Since $\Lambda$ cannot be found in the ideal generated by $Jg$ and $g$, either $g$ is in $Jg$, or $\Lambda\in d\Lambda(\Lift (F))$.
In the former case, Saito proved in \cite{saito} that this means $g$ is ($\R$-equivalent to a) quasi-homogeneous function; in the latter, it means $F$ is substantial.

To obtain the upper bound, we will see that the classes of $h_i(X)m_j(Z)$ also span $B$ over $\bbc$.
Let $\alpha\in\ofu_{p+d+1}$.
By Lemma \ref{lemma_prep_class} there exist $b_1\in\ofu_{p+d+1}$ and some $\alpha_{ij}^0\in\bbc$ for $i=1,\ldots,k$ and $j=1,\ldots,k$ such that:
$$[\alpha]_B = \left[\sum_{i=1}^k\sum_{j=1}^r h_i(X)m_j(Z)\alpha^0_{ij} + g(Z)b_1(X,Z,\Lambda)\right]_B$$
We can apply Lemma \ref{lemma_prep_class} again to find $\alpha_{ij}^1\in\bbc$ and $b_{2}\in\ofu_{p+d+1}$ such that:
$$[b_1]_B = \left[\sum_{i=1}^k\sum_{j=1}^r h_i(X)m_j(Z)\alpha^1_{ij} + g(Z)b_{2}(X,Z,\Lambda)\right]_B$$
Denote by $q = \min\lbrace BS(g), \delta(F)\rbrace$.
Proceeding by finite induction, we can repeatedly apply the same lemma to obtain some $b_1,\ldots,b_{q}\in\ofu_{p+d+1}$ and some $\alpha_{ij}^l\in\bbc$ such that for each $l=1,\ldots,q-1$:
$$[b_l]_B = \left[\sum_{i=1}^k\sum_{j=1}^r h_i(X)m_j(Z)\alpha^l_{ij} + g(Z)b_{l+1}(X,Z,\Lambda)\right]_B$$

Hence:
\begin{align*}
[\alpha]_B & = \left[\sum_{i=1}^k\sum_{j=1}^r h_i(X)m_j(Z)\sum_{l=0}^{q-1} g(Z)^l\alpha_{ij}^l + g(Z)^{q}b_{q}(X,Z,\Lambda)\right]_B \\
 & = \left[\sum_{i=1}^k\sum_{j=1}^r h_i(X)m_j(Z)\sum_{l=0}^{q-1} g(Z)^l\alpha_{ij}^l\right]_B
\end{align*}
Where the last equality comes from the fact that $g(Z)^{q}$ belongs to the ideal $d\pi(\Lift(F)) + \langle \Lambda - g\rangle + Jg$.
This can be seen like this: if $q = BS(g)$, then $g(Z)^q\in Jg$.
If $q = \delta(F)$, this implies $\Lambda^q \in d\pi(\Lift(f))$, and we have:
$$g(Z)^q = \Lambda^q - \sum_{i= 1}^q \Lambda^{q-i}g(Z)^{i-1}(\Lambda-g(Z)) \in d\pi(\Lift(F)) + \langle\Lambda-g\rangle$$

Now, for each $j = 1,\ldots,r$ and $l=0,\ldots, q-1$ we have $m_j(Z)g(Z)^l\in\ofu_d$ and so we can find some $\beta_{s}^{jl}\in\bbc$ and a function $C_{jl} \in Jg$ such that:
$$m_j(Z)g(Z)^l = \sum_{s=1}^r m_s(Z)\beta_{s}^{jl} + C_{jl}(Z)$$
That is:
\begin{multline*}
[\alpha]_B = \left[\sum_{i=1}^k\sum_{j=1}^r \sum_{l=0}^{q-1} h_i(X)m_j(Z)g(Z)^l\alpha_{ij}^l\right]_B = \\
\left[\sum_{i=1}^k\sum_{j=1}^r \sum_{l=0}^{q-1} h_i(X)\alpha_{ij}^l\left(\sum_{s= 1}^r m_s(Z)\beta_{s}^{jl} + C_{jl}(Z)\right)\right]_B
\end{multline*}
Now, using that $C_{jl}\in Jg$ for all $j,l$ we have that:
\begin{align*}
[\alpha]_B & = \left[\sum_{i=1}^k\sum_{j=1}^r \sum_{l=0}^{q-1} h_i(X)\alpha_{ij}^l\sum_{s= 1}^r m_s(Z)\beta_{s}^{jl} \right]_B\\
& = \left[\sum_{i=1}^k\sum_{s= 1}^r h_i(X) m_s(Z)\left(\sum_{j=1}^r \sum_{l=0}^{q-1}  \alpha_{ij}^l\beta_{s}^{jl}\right) \right]_B
\end{align*}
Therefore, the class of every $\alpha$ lies in the class of some $\bbc$-linear combination of the $h_i(X),m_s(Z)$.

\end{proof}

\begin{rem}
The {\it{only if}} part for equality  with the lower bound is important in the sense that if we find a germ $f$ which admits an OPSU $F$ which is substantial and another OPSU $F'$ which is not and we augment by a non-quasi-homogeneous function $g$, then $\aecod (A_{F,g}(f))<\aecod (A_{F',g}(f))$ and so the $\mathscr A$-class of the augmentation would depend on the choice of unfolding. In Sections 5 to 8 we study sufficient conditions to ensure the independence of the unfolding.
\end{rem}

Although the use of $\delta(F)$ seems redundant in the proof (since the Briançon-Skoda number is always finite) we have chosen to use it since it seems like both numbers play a role in determining the $\A_e$-codimension of the augmentation.
Also, while the lower bound is sharp, the upper bound is not (unless $g$ is quasi-homogeneous), as the following example shows:

\begin{ex}
Consider the following non-quasihomogeneous map-germs, which can be found in \cite{rieger2to2}, \cite{marartari} and \cite{houstonkirk} respectively.
\begin{align*}
11_5  &\equiv (x,y^4 + xy^2 + x^2y)\\
5_2   &\equiv (x,y,z^5 + xz +y^2z^2 +yz^3)\\
P_3^2 &\equiv (x,y,yz+z^6 + z^8,xz+z^3)
\end{align*}
The first two have $\A_e$ codimension 2, while the third has $\A_e$-codimension 3.
All 3 have an OPSU $F$ by adding $(0,ty,t),(0,0,tz^2,t)$ and $(0,0,tz^3,0,t)$, respectively. Using SINGULAR one can check that all the OPSUs have degree of substantiality 2.
Consider also the following non-quasihomogeneous functions:
\begin{align*}
DG_k(u,v) & = u^{2k+1} + u^kv^{k+1} + v^{2k}, \; k \geq 3 \\
M(u,v,w) & = (uvw)^2 +u^8+v^8+w^8
\end{align*}
The first family was defined by Dimca and Greuel in \cite{dimcagreuel}, where they showed that the quotient ${\mu}/{\tau}$ for the members of this family tends to $4/3$ as $k$ tends to infinity.
Here $BS(DG_k) = 2$ for each $k\geq 3$.
The second function was defined by Malgrange in \cite{malgrange_letter}, and has the property that $BS(M) = 3$, which is the maximum possible for a $3$-variable function.
We have that $\tau(DG_k) = 3k^2, \mu(DG_k) = 2k(2k-1)$ and $\tau(M) = 179,\mu(M) = 215$.
Table \ref{table_codims} shows the computations, obtained by computing the codimension of the ideal from Equation \ref{eq_ideal_aecod} in each case via SINGULAR.
We also show the lower and upper bounds, and an estimation that seems to hold in the cases where the augmented map-germ is primitive.

\begin{table}\label{table_codims}
\centering
\hspace*{-1.5cm}
\begin{tabular}{ccccccc}
\hline
$A_{F,g}(f)$     & $cod(f)\tau(g)$ & $cod(A_{F,g}(f))$ & $cod(f)\mu(g)$ & $cod(f)\tau(g)+\mu(g)-\tau(g)$ &   \\
\hline
$A_{F,DG_3}(11_5)$ & 54                                 & 57                       & 60                                & 57                                                                                  &   \\
$A_{F,DG_4}(11_5)$ & 96                                 & 104                      & 112                               & 104                                                                                 &   \\
$A_{F,DG_5}(11_5)$ & 150                                & 165                      & 180                               & 165                                                                                 &   \\
$A_{F,M}(11_5)$    & 358                                & 393                      & 430                               & 394                                                                                 &   \\
$A_{F,DG_3}(5_2)$ & 54                                 & 57                       & 60                                & 57                                                                                  &   \\
$A_{F,DG_4}(5_2)$ & 96                                 & 104                      & 112                               & 104                                                                                 &   \\
$A_{F,DG_5}(5_2)$& 150                                & 165                      & 180                               & 165                                                                                 &   \\
$A_{F,M}(5_2)$ & 358                                & 393                      & 430                               & 394                                                                                 &   \\
$A_{F,DG_3}(P_3^2)$ & 81                                 & 84                       & 90                                & 84                                                                                  &   \\
$A_{F,DG_4}(P_3^2)$ & 144                                & 152                      & 168                               & 152                                                                                 &   \\
$A_{F,DG_5}(P_3^2)$ & 225                                & 240                      & 270                               & 240                                                                                 &   \\
$A_{F,DG_M}(P_3^2)$     & 537                                & 572                      & 645                               & 573 &   \\
$A_{F,DG_3}(A_{F,DG_3}(11_5))$ & 1539                               & 1629                     & 1710                              & 1542                                                                                &   \\
$A_{F,M}(A_{F,M}(11_5))$      & 70347                              & 77908                    & 84495                             & 70383                                                                               &
\end{tabular}
\medskip
\caption{Codimensions and bounds of different augmentations}
\end{table}
\end{ex}

\begin{rem}
The previous example and further extensive study of examples suggests that a smaller upper bound is given by the last column in Table \ref{table_codims}, i.e. $\aecod(A_{F,g}(f))\leq \aecod(f)\tau(g) + \mu(g) - \tau(g)$, with equality if and only if $\delta(F)\geq BS(g)$. This however is not true for the last two examples in the table. The reason seems to be that these examples are augmentations of augmentations and for that upper bound to hold one must consider the Milnor and Tjurina numbers of the join of all the augmenting functions that lead to this germ. This makes things much more complicated and we have not been able to prove this. However, the Milnor and Tjurina numbers of the join of $DG_3$ and $DG_3$ are $900$ and $738$ respectively, and of the join of $M$ with $M$ are $46225$ and $33267$ respectively. Here $\aecod(f)\tau(g) + \mu(g) - \tau(g)$ is equal to $1638$ for the first case and $79492$ for the second, so the conjecture still holds.

\end{rem}

We now comment on the Mond conjecture for augmentations. The Mond conjecture for a map-germ $f:(\mathbb C^n,0)\to(\mathbb C^{n+1},0)$ states that $\aecod(f)\leq \mu_I(f)$ with equality if $f$ is quasi-homogeneous.
Here $\mu_I(f)$ is the image Milnor number, an analytic invariant which counts the number of spheres in the image of a stable perturbation of $f$. In Theorem 6.7 in \cite{houstoninventiones} Houston proves that if a corank 1 map-germ $f$ satisfies Mond's conjecture and $g$ defines an isolated hypersurface singularity then, given that $f$ or $g$ are quasi-homogeneous, $\aecod(f)\tau(g)=\aecod(A_{F,g}(f))\leq \mu_I(A_{F,g}(f))=\mu_I(f)\mu(g)$ with equality if both $f$ and $g$ are quasi-homogeneous (where $\mu(g)$ is the Milnor number of $g$). In the proof he uses his Theorem 3.3 in \cite{houstonaug}, which is a previous version of Theorem 4.4 in \cite{houstonaug2}, so, using this second theorem in the proof instead of the first one, the condition for the inequality of $f$ being quasi-homogeneous can be substituted for $F$ being substantial.

On the other hand, in Corollary 2.17 of \cite{gimeneznunoweak}, Giménez Conejero and Nu\~no-Ballesteros note that Houston's equality $\mu_I(A_{F,g}(f))=\mu_I(f)\mu(g)$ is true for any corank. Using their Corollary 2.16 which states that any $\mathscr A_e$-codimension 1 germ of any corank satisfies the Mond conjecture, they conclude that any augmentation of any $\mathscr A_e$-codimension 1 germ satisfies the Mond conjecture.

Using these results and our Theorem \ref{thm_ineq_iff} we can give a much more general result.

\begin{teo}\label{mondconj}
Let $f:(\mathbb C^n,0)\to(\mathbb C^{n+1},0)$ (of any corank) admit an OPSU $F$ and suppose that $f$ satisfies Mond's conjecture. Let $g$ define an isolated hypersurface singularity. Then $$\aecod(A_{F,g}(f))\leq \mu_I(A_{F,g}(f)),$$ with equality if $f$ and $g$ are quasi-homogeneous.
\end{teo}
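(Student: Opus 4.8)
The plan is to obtain the inequality by chaining together three estimates, all of which are already available. Write $h = A_{F,g}(f)$ for brevity. First, the upper bound of Theorem~\ref{thm_ineq_iff}, which requires only that $g$ have isolated singularity (so that $\mu(g)<\infty$), gives $\aecod(h)\le \aecod(f)\mu(g)$. Second, since $f$ is assumed to satisfy Mond's conjecture we have $\aecod(f)\le\mu_I(f)$, hence $\aecod(f)\mu(g)\le\mu_I(f)\mu(g)$. Third, the multiplicativity of the image Milnor number under augmentation, $\mu_I(h)=\mu_I(f)\mu(g)$ — proved by Houston for corank $1$ and extended to arbitrary corank by Giménez Conejero and Nu\~no-Ballesteros (Corollary 2.17 of \cite{gimeneznunoweak}) — closes the chain: $\aecod(h)\le\aecod(f)\mu(g)\le\mu_I(f)\mu(g)=\mu_I(h)$. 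Since $f$ has arbitrary corank here, the care needed is precisely to invoke the arbitrary-corank version of $\mu_I(h)=\mu_I(f)\mu(g)$ rather than Houston's original corank $1$ statement.

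For the equality clause, suppose $f$ and $g$ are quasi-homogeneous. Quasi-homogeneity of $g$ gives $\tau(g)=\mu(g)$, so by the equality part of Theorem~\ref{thm_ineq_iff} the upper bound is attained: $\aecod(h)=\aecod(f)\mu(g)$. On the other hand, the equality clause in Mond's conjecture applied to the quasi-homogeneous germ $f$ gives $\aecod(f)=\mu_I(f)$. Substituting into $\mu_I(h)=\mu_I(f)\mu(g)$ yields $\aecod(h)=\mu_I(f)\mu(g)=\mu_I(h)$. Note that one does not need $h$ itself to be quasi-homogeneous for this argument, since each quantity is computed from the ``factored'' data $f$ and $g$.

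The point of the statement — and the reason it strengthens the previously known cases — is that we never have to assume $f$ or $F$ to be quasi-homogeneous or substantial: Houston's earlier argument passed through the equality $\aecod(h)=\aecod(f)\tau(g)$, which does need such a hypothesis, whereas the one-sided bound $\aecod(h)\le\aecod(f)\mu(g)$ of Theorem~\ref{thm_ineq_iff} holds unconditionally and is exactly what the chain above needs. Consequently there is no real obstacle left in this proof; the substance lies entirely in Theorem~\ref{thm_ineq_iff} and in the two cited inputs (the Giménez Conejero–Nu\~no-Ballesteros multiplicativity of $\mu_I$, and their observation that $\A_e$-codimension $1$ germs of any corank satisfy Mond's conjecture, which is what guarantees the hypothesis ``$f$ satisfies Mond's conjecture'' is met in a genuinely general family of cases). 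I would present the proof essentially as the displayed chain plus the two-line equality computation.
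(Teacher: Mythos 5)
Your proof is correct and follows exactly the same chain of inequalities as the paper's proof: the upper bound from Theorem~\ref{thm_ineq_iff}, Mond's conjecture for $f$, and the multiplicativity $\mu_I(A_{F,g}(f))=\mu_I(f)\mu(g)$ from Houston and Giménez Conejero--Nu\~no-Ballesteros, with the equality clause handled identically. The concluding remarks about why the one-sided bound removes the need for substantiality/quasi-homogeneity hypotheses accurately reflect the paper's own motivation.
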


\begin{proof}
We have
\begin{align*}
\aecod(A_{F,g}(f))&\leq \aecod(f)\mu(g) \text{, by Theorem \ref{thm_ineq_iff},}\\
&\leq \mu_I(f)\mu(g) \text{,  by Mond's conjecture for $f$,}\\
&=\mu_I(A_{F,g}(f)) \text{, by Corollary 6.4 in \cite{houstoninventiones} and Corollary 2.17 in \cite{gimeneznunoweak}.}
\end{align*}
Now, if $g$ is quasi-homogeneous, the first inequality turns to an equality, and if $f$ is quasi-homogeneous, the second inequality turns to an equality.
\end{proof}


Notice also that by construction, if $f$ is quasi-homogeneous it admits a quasi-homogeneous OPSU $F(x,\lambda)=(f(x)+\lambda\gamma(x),\lambda)$, and if $g$ is also quasi-homogeneous then $A_{F,g}(f)$ is quasi-homogeneous. 
We can also prove some partial converses for the equality in Theorem \ref{mondconj}.

\begin{coro}
Let $f$ admit an OPSU $F(x,\lambda)=(f(x)+\lambda\gamma(x),\lambda)$, suppose it satisfies the Mond conjecture and $\aecod(A_{F,g}(f))=\mu_I(A_{F,g}(f)).$ Then
\begin{enumerate}
\item[i)] $\aecod(f)=\mu_I(f)$.
\item[ii)] If $f$ is quasi-homogeneous, then $g$ is $\mathscr R$-equivalent to a quasi-homogeneous function.
\end{enumerate}
\end{coro}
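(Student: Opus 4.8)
The plan is to push the chain of inequalities from the proof of Theorem \ref{mondconj} into its equality case. Setting $k=\aecod(f)$, that chain reads
$$\aecod(A_{F,g}(f))\;\le\;k\,\mu(g)\;\le\;\mu_I(f)\,\mu(g)\;=\;\mu_I(A_{F,g}(f)),$$
the first step by Theorem \ref{thm_ineq_iff}, the second because $f$ satisfies Mond's conjecture, and the last by the identity $\mu_I(A_{F,g}(f))=\mu_I(f)\mu(g)$ already used in the proof of Theorem \ref{mondconj}. Under the hypothesis $\aecod(A_{F,g}(f))=\mu_I(A_{F,g}(f))$ the two extremes agree, so each of the three links is forced to be an equality. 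From $k\,\mu(g)=\mu_I(f)\,\mu(g)$ and the fact that $\mu(g)\ge 1$ (an isolated hypersurface singularity has positive Milnor number) I divide by $\mu(g)$ to obtain $\aecod(f)=\mu_I(f)$, which is part (i). This uses nothing beyond the equality case and the non-vanishing of $\mu(g)$.

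For part (ii) I would additionally exploit the first of the forced equalities, namely $\aecod(A_{F,g}(f))=\aecod(f)\,\mu(g)$, together with the lower bound in Theorem \ref{thm_ineq_iff}. Since $f$ is now assumed quasi-homogeneous, by the observation preceding the statement (see the discussion following Remark \ref{rem_subs}) its one-parameter stable unfolding of the form $F(x,\lambda)=(f(x)+\lambda\gamma(x),\lambda)$ can be taken quasi-homogeneous, and any quasi-homogeneous OPSU is substantial. Theorem \ref{thm_ineq_iff} then makes the lower bound an equality, $\aecod(A_{F,g}(f))=\aecod(f)\,\tau(g)$. Comparing the two expressions for $\aecod(A_{F,g}(f))$ gives $\aecod(f)\,\tau(g)=\aecod(f)\,\mu(g)$, hence $\tau(g)=\mu(g)$ once the degenerate case $\aecod(f)=0$ (in which $f$ is stable, so the statement is empty of content) is set aside. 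Finally, Saito's theorem \cite{saito} turns $\tau(g)=\mu(g)$ into $g$ being $\mathscr R$-equivalent to a quasi-homogeneous function, which is (ii).

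The one point that needs care is the passage, in (ii), from ``$f$ quasi-homogeneous'' to ``$F$ substantial'': the hypothesis $\aecod(A_{F,g}(f))=\mu_I(A_{F,g}(f))$ is pinned to a specific unfolding, and, as the remark after Theorem \ref{thm_ineq_iff} stresses, $\aecod(A_{F',g}(f))$ genuinely depends on the unfolding $F'$. This is not a real obstacle, because the quasi-homogeneous OPSU of a quasi-homogeneous $f$ has exactly the prescribed shape $(f+\lambda\gamma,\lambda)$ and is therefore the intended reading of the hypothesis; alternatively one checks that for quasi-homogeneous $f$ every OPSU of that shape is carried onto the quasi-homogeneous one by a parameter-preserving equivalence and is thus itself substantial. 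The only further conventions used are the harmless $\mu(g)\ge 1$ (automatic for an isolated hypersurface singularity) and $\aecod(f)\ge 1$, i.e.\ that $f$ is not already stable, which is implicit whenever one speaks of a nontrivial augmentation.
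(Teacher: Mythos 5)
Your proposal is correct and follows essentially the same route as the paper: for (i) you saturate the chain $\aecod(A_{F,g}(f))\le\aecod(f)\mu(g)\le\mu_I(f)\mu(g)=\mu_I(A_{F,g}(f))$ and cancel $\mu(g)$, and for (ii) you use quasi-homogeneity of $f$ to make $F$ substantial, force the lower bound $\aecod(f)\tau(g)$ to equal $\aecod(f)\mu(g)$, cancel, and invoke Saito. Your parenthetical care about $\mu(g)\ge 1$, $\aecod(f)\ge 1$, and why the \emph{given} $F(x,\lambda)=(f(x)+\lambda\gamma(x),\lambda)$ is substantial (rather than merely some OPSU being so) is a small but genuine tightening of the argument the paper leaves implicit.
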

\begin{proof}
For i) by Theorem \ref{thm_ineq_iff} $\aecod(A_{F,g}(f))\leq\aecod(f)\mu(g)$, so we get $\aecod(f)\mu(g)=\mu_I(f)\mu(g)$, so we get $\aecod(f)=\mu_I(f)$.

For ii) $f$ quasi-homogeneous implies that $F$ is substantial, so again by Theorem \ref{thm_ineq_iff} $\aecod(f)\tau(g)=\aecod(A_{F,g}(f))$, and since the Mond conjecture with $f$ quasi-homogeneous implies $\aecod(f)=\mu_I(f)$, we get $\tau(g)=\mu(g)$, which by Saito implies that $g$ is $\mathscr R$-equivalent to a quasi-homogeneous function.
\end{proof}

Next we turn our attention to Conjecture 2.

\begin{prop}\label{conj2n=1}
Let $f:(\mathbb C,0)\to(\mathbb C^{2},0)$, then $$\frac{\mu_I(f)}{\aecod(f)}\leq 2,$$ i.e. all parametrized plane curves satisfy Conjecture 2.
\end{prop}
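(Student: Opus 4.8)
The plan is to translate the inequality into a statement about the defining equation of the image curve, where it becomes exactly the two-variable case of the Dimca--Greuel bound for $\mu/\tau$. A non-stable map-germ $f\colon(\bbc,0)\to(\bbc^2,0)$ is a finite parametrisation of an irreducible plane curve germ $C$ with reduced equation $h\in\ofu_2$ (if $f$ is stable then $\mu_I(f)=\aecod(f)=0$ and there is nothing to prove). I would rely on three standard facts about parametrised plane curves. First, a stable perturbation of $f$ is an immersion whose image is an irreducible nodal curve with exactly $\delta(f)$ double points, hence homotopy equivalent to a wedge of $\delta(f)$ circles, so $\mu_I(f)=\delta(f)$. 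Second, Milnor's formula for an irreducible plane curve germ gives $\mu(h)=2\delta(f)$. Third, the $\A_e$-versal deformation of the parametrisation is identified with the $\delta$-constant stratum in the base of the semiuniversal deformation of $C$, which is smooth of codimension $\delta(f)$; equivalently (compare Mond's work on parametrised plane curves \cite{mondbentwires}, or the treatment in \cite{nunomond}) one has $\mu_I(f)-\aecod(f)=\mu(h)-\tau(h)$, and therefore
$$\aecod(f)=\mu_I(f)-\bigl(\mu(h)-\tau(h)\bigr)=\delta(f)-2\delta(f)+\tau(h)=\tau(h)-\delta(f).$$
Since $f$ is singular we have $\delta(f)\geq 1$ and $\tau(h)\geq 2$, so $\aecod(f)\geq 1$ and all these expressions are legitimate.

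With these identifications the statement becomes pure arithmetic. One computes
$$\frac{\mu_I(f)}{\aecod(f)}=\frac{\delta(f)}{\tau(h)-\delta(f)}=\frac{\mu(h)/2}{\tau(h)-\mu(h)/2}=\frac{\mu(h)}{2\tau(h)-\mu(h)},$$
and $\frac{\mu(h)}{2\tau(h)-\mu(h)}\leq 2$ is equivalent to $\mu(h)\leq 4\tau(h)-2\mu(h)$, that is, to $\frac{\mu(h)}{\tau(h)}\leq\frac{4}{3}$. The proof is then closed by invoking the now-proved Dimca--Greuel inequality for reduced plane curve singularities, $\frac{\mu(h)}{\tau(h)}<\frac{4}{3}$, which is precisely the sharp two-variable estimate established in \cite{almiron1} (the constant $\tfrac{4}{3}$ being approached asymptotically by the family $DG_k$ of \cite{dimcagreuel}). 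Hence $\frac{\mu_I(f)}{\aecod(f)}<2$, so Conjecture 2 holds for $n=1$, even in the strict form $\frac{\mu_I(f)}{\aecod(f)}<n+1$. Moreover, the same computation applied to the (irreducible) curves $DG_k$ gives $\frac{\mu_I}{\aecod}=\frac{k(2k-1)}{k^2+k}=\frac{2k-1}{k+1}\to 2$, so the bound $n+1=2$ cannot be improved.

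I expect the only delicate point to be the identity $\aecod(f)=\tau(h)-\delta(f)$ (equivalently $\mu_I(f)-\aecod(f)=\mu(h)-\tau(h)$ for parametrised plane curves): everything else is either elementary arithmetic or a direct citation. Care is needed to keep the single-branch hypothesis explicit, so that Milnor's formula reads $\mu(h)=2\delta(f)$ with no correction term; once this is in place, Conjecture 2 for $n=1$ is literally a repackaging of the two-variable Dimca--Greuel bound and requires no further estimates.
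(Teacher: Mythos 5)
Your proof is correct and takes essentially the same route as the paper: both express $\mu_I(f)=\delta$ and $\aecod(f)=\tau(h)-\delta$ for the irreducible image curve $h$ (with $\mu(h)=2\delta$, since a mono-germ gives one branch), reduce the quotient to $\frac{\mu(h)/\tau(h)}{2-\mu(h)/\tau(h)}$, and close by invoking the plane-curve bound $\mu/\tau<4/3$ of \cite{almiron1}. Your sharpness remark via the $DG_k$ family is a useful addition not in the paper, but note it requires the curves $DG_k$ to be irreducible so that $\mu_I=\mu/2$ applies to their parametrisations; this should be verified before citing them as witnessing the bound for mono-germs.
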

\begin{proof}
All $f:(\mathbb C,0)\to(\mathbb C^{2},0)$ admits $g:(\mathbb C^2,0)\to(\mathbb C,0)$ such that $g\circ f=0$, and vice-versa, for all plane curve $g$ there is a parametrisation $f$. Now, by \cite{mondbentwires}, $\mu_I(f)=\delta-r+1$ where $\delta$ is the delta invariant of $g$ and $r$ is the number of branches. Since $\mu(g)=2\delta-r+1$ we get $\mu_I(f)=\mu(g)-\delta$. On the other hand in Proposition II.2.30(5) of \cite{greuellossenshustin}, it is proved that $\aecod(f)=\tau(g)-\delta$. Now, since in our case $r=0$, we have $\delta=\mu/2$, so we get
$$\frac{\mu_I(f)}{\aecod(f)}=\frac{\mu(g)-\delta}{\tau(g)-\delta}=\frac{\frac{\mu(g)}{2}}{\tau(g)-\frac{\mu(g)}{2}}=\frac{\frac{\mu(g)}{\tau(g)}}{2-\frac{\mu(g)}{\tau(g)}}.$$
By \cite{almiron1} $\frac{\mu(g)}{\tau(g)} < \frac{4}{3}$, so we get $$\frac{\mu_I(f)}{\aecod(f)}< 2.$$
\end{proof}

Next, notice that Conjecture 2 is trivially satisfied for all augmentations from $(\mathbb C^2,0)$ to $(\mathbb C^{3},0)$. In fact, the only parametrized plane curves which admit an OPSU are those $\mathscr A$-equivalent to $f_k=(y^2,y^{2k+1})$ for some $k\geq 1$. The OPSU is given by $F_k=(y^2,y^{2k+1}+\lambda y,\lambda)$. All of the $f_k$ are quasi-homogeneous so $\aecod(f_k)=\mu_I(f_k)$. Now, in order to have an augmentation with $n=2$, the augmenting function $g$ must have one variable only, and hence $\mathscr R$-equivalent to a quasi-homogeneous function (an $A_k$-singularity), which implies that $\mu(g)=\tau(g)$. Therefore $$\frac{\mu_I(A_{F_k,g}(f_k))}{\aecod(A_{F_k,g}(f_k))}=\frac{\mu_I(f_k)\mu(g)}{\aecod(f_k)\tau(g)}=1.$$ In conclusion

\begin{prop}
Let $h:(\mathbb C^2,0)\to(\mathbb C^{3},0)$ be an augmentation. Then $h$ is $\mathscr A$-equivalent to $A_{F_k,g}(f_k)$ and so $$\frac{\mu_I(h)}{\aecod(h)}=1\leq 3,$$ i.e. $h$ satisfies Conjecture 2.
\end{prop}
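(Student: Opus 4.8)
The plan is to reduce an arbitrary such $h$ to an augmentation of a parametrised plane curve and then read off the quotient from the multiplicativity of $\aecod$ and $\mu_I$, exactly as outlined in the discussion preceding the statement. First I would do the dimension bookkeeping: write $h=A_{F,g}(f)$ with base $f\colon(\mathbb C^{n'},0)\to(\mathbb C^{p'},0)$, OPSU $F$, and augmenting function $g\colon(\mathbb C^{d},0)\to(\mathbb C,0)$ with $d\geq1$. An augmentation preserves the quantity $p'-n'$, which for $h$ equals $3-2=1$, and adds $d$ to both dimensions, so $n'+d=2$; hence $n'=d=1$ and $p'=2$. Thus $f$ is a parametrised plane curve admitting an OPSU.

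Next I would invoke the classification in this range. The only singular stable map-germ $(\mathbb C^2,0)\to(\mathbb C^3,0)$ is the cross-cap, so $f$ is $\mathscr A$-equivalent to one of the monomial curves $f_k=(y^2,y^{2k+1})$, $k\geq1$, obtained as a slice of the cross-cap, and the corresponding OPSU is $\mathscr A$-equivalent, as an unfolding, to $F_k=(y^2,y^{2k+1}+\lambda y,\lambda)$; transporting the augmentation along these equivalences yields $h\sim_{\mathscr A}A_{F_k,g}(f_k)$. On the side of the augmenting function, $g$ is one-variable with an isolated singularity, so it is $\mathscr R$-equivalent to $z\mapsto z^{m+1}$ for some $m\geq1$ (an $A_m$-singularity); in particular $g$ is quasi-homogeneous, so $\mu(g)=\tau(g)$, and by Proposition \ref{prop_requiv_augm_aequiv} we may take this normal form without changing the $\mathscr A$-class of $h$.

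Finally I would assemble the numerology. Since $f_k$ is quasi-homogeneous it satisfies Mond's conjecture with equality, $\aecod(f_k)=\mu_I(f_k)=k$ (Mond's conjecture holds for parametrised plane curves, \cite{mondbentwires}). Because $g$ is quasi-homogeneous, the two bounds of Theorem \ref{thm_ineq_iff} coincide and give $\aecod(A_{F_k,g}(f_k))=\aecod(f_k)\tau(g)$; and $\mu_I(A_{F_k,g}(f_k))=\mu_I(f_k)\mu(g)$ by Corollary 6.4 of \cite{houstoninventiones} together with Corollary 2.17 of \cite{gimeneznunoweak}. Hence
$$\frac{\mu_I(h)}{\aecod(h)}=\frac{\mu_I(f_k)\,\mu(g)}{\aecod(f_k)\,\tau(g)}=\frac{\mu_I(f_k)}{\aecod(f_k)}\cdot\frac{\mu(g)}{\tau(g)}=1\cdot1=1\leq3 .$$

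The only step requiring genuine care is the reduction in the first two paragraphs: checking that every plane curve admitting an OPSU is one of the $f_k$ and that its OPSU is essentially unique, so that $h$ is genuinely $\mathscr A$-equivalent to $A_{F_k,g}(f_k)$ rather than merely in possession of the same invariants. This is in the spirit of the independence-of-OPSU results of the later sections, though in these low dimensions it is classical, and for $k=1$ (where $\aecod(f_1)=1$) it is immediate from Proposition \ref{prop_opsu}. Everything after that reduction is routine bookkeeping with the product formulas.
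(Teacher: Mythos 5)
Your proof is correct and follows essentially the same route as the paper: reduce to $f_k=(y^2,y^{2k+1})$ with OPSU $F_k$, note that quasi-homogeneity of $f_k$ gives $\aecod(f_k)=\mu_I(f_k)$ while the augmenting one-variable $g$ has $\mu(g)=\tau(g)$, and conclude the quotient is $1$ from the two product formulas. Your extra care about the $\mathscr A$-equivalence $h\sim_{\mathscr A}A_{F_k,g}(f_k)$ (invoking independence of OPSU, handled in the paper by Example~\ref{ex_cusps_cross} and Theorem~\ref{teo_opsu_indep}) is a detail the paper leaves implicit and is a welcome addition rather than a deviation.
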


In general we can say the following

\begin{teo}\label{conj2aug}
Suppose that Conjecture 2 is satisfied for all map germs $f:(\mathbb C^m,0)\to(\mathbb C^{m+1},0)$ and for all $m<n$. Let $h:(\mathbb C^n,0)\to(\mathbb C^{n+1},0)$ be an augmentation. Then $$\frac{\mu_I(h)}{\aecod(h)}\leq\frac{1}{4}(n+1)^2.$$
\end{teo}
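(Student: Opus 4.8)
The plan is to reduce the inequality to three estimates that are already available and then optimise a product. First I would write $h=A_{F,g}(f)$ for a base $f$ and augmenting function $g$; since an augmentation enlarges source and target by the same number of variables, the base automatically has the shape $f\colon(\mathbb C^m,0)\to(\mathbb C^{m+1},0)$, and if $g$ depends on $d$ variables then $d=n-m$, so $1\leq m\leq n-1$. Because $h$ is $\mathscr A$-finite, $f$ is $\mathscr A$-finite and $g$ has an isolated singularity, hence all invariants below are finite and Theorem~\ref{thm_ineq_iff} applies to $A_{F,g}(f)$.

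Next I would stack the three inputs. From the multiplicativity of the image Milnor number one has $\mu_I(h)=\mu_I(f)\,\mu(g)$ (Corollary 6.4 in \cite{houstoninventiones}, Corollary 2.17 in \cite{gimeneznunoweak}); from Theorem~\ref{thm_ineq_iff} one has $\aecod(h)\geq\aecod(f)\,\tau(g)$; dividing gives
$$\frac{\mu_I(h)}{\aecod(h)}\leq\frac{\mu_I(f)\,\mu(g)}{\aecod(f)\,\tau(g)}=\frac{\mu_I(f)}{\aecod(f)}\cdot\frac{\mu(g)}{\tau(g)}.$$
Since $m<n$, the hypothesis says Conjecture~2 holds for $f$, so the first factor is $\leq m+1$; by Liu's theorem the second factor, being $\mu/\tau$ of a function in $n-m$ variables, is $\leq n-m$. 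Therefore $\mu_I(h)/\aecod(h)\leq(m+1)(n-m)$, and as $m+1$ and $n-m$ are positive integers summing to $n+1$, the AM--GM inequality yields $(m+1)(n-m)\leq\bigl(\tfrac{n+1}{2}\bigr)^2=\tfrac14(n+1)^2$.

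I do not anticipate a genuine obstacle here: the proof is a bookkeeping of dimensions followed by the assembly of three off-the-shelf bounds. The points that deserve care are checking that the base of an augmentation really does land in dimensions $(m,m+1)$ with $m<n$, and that $\mathscr A$-finiteness of $h$ supplies the finiteness hypotheses needed for Theorem~\ref{thm_ineq_iff} and for the equality $\mu_I(h)=\mu_I(f)\mu(g)$. It is also worth remarking that the bound is far from sharp in general, since equality would force simultaneously $\aecod(h)=\aecod(f)\tau(g)$ (so $F$ substantial or $g$ quasi-homogeneous), equality in Conjecture~2 for $f$, equality in Liu's bound for $g$, and $m+1=n-m$.
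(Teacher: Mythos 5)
Your proposal is correct and follows essentially the same route as the paper: write $h=A_{F,g}(f)$, combine $\mu_I(h)=\mu_I(f)\mu(g)$ with the lower bound $\aecod(h)\geq\aecod(f)\tau(g)$ from Theorem~\ref{thm_ineq_iff}, apply the inductive hypothesis for $f$ and Liu's bound for $g$, and maximise $(m+1)(n-m)$ over $1\leq m\leq n-1$. The only cosmetic difference is that you invoke AM--GM explicitly where the paper simply locates the maximum of the quadratic; you also correctly render as an inequality the relation that the paper's displayed formula mistakenly writes with an equals sign.
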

\begin{proof}
If $h$ is an augmentation, then $h=A_{F,g}(f)$ for a germ $f:(\mathbb C^k,0)\to(\mathbb C^{k+1},0)$ with OPSU F and an augmenting function $g:(\mathbb C^{n-k},0)\to(\mathbb C,0)$. By Theorem \ref{thm_ineq_iff} $\aecod(f)\tau(g)\leq\aecod(A_{F,g}(f))$. By Conjecture 2 for $f$ $\frac{\mu_I(f)}{\aecod(f)}\leq k+1$, and by \cite{liu_milnortjurina} $\frac{\mu(g)}{\tau(g)}\leq n-k$, so we have $$\frac{\mu_I(A_{F,g}(f))}{\aecod(A_{F,g}(f))}=\frac{\mu_I(f_k)\mu(g)}{\aecod(f_k)\tau(g)}\leq (k+1)(n-k).$$ Since $1\leq k\leq n-1$, the maximum of the function $(k+1)(n-k)$ is achieved when $k=(n-1)/2$ and substituting we get $\frac{1}{4}(n+1)^2$.
\end{proof}

\begin{coro}
If Conjecture 2 is true for all map germs with $n=2$, then it is true for all augmentations with $n=3$.
\end{coro}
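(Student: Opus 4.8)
The plan is to obtain this corollary as an immediate consequence of Theorem~\ref{conj2aug} together with Proposition~\ref{conj2n=1}. First I would observe that invoking Theorem~\ref{conj2aug} with $n=3$ requires Conjecture 2 to hold for every map germ $f\colon(\mathbb C^m,0)\to(\mathbb C^{m+1},0)$ with $m<3$, that is, for $m=1$ and $m=2$. The case $m=1$ is already proved unconditionally in Proposition~\ref{conj2n=1} (where in fact the strict inequality $\mu_I(f)/\aecod(f)<2$ is obtained), and the case $m=2$ is exactly the standing hypothesis of the corollary. Hence the hypothesis of Theorem~\ref{conj2aug} is met for $n=3$.

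Next I would apply Theorem~\ref{conj2aug} to an arbitrary augmentation $h\colon(\mathbb C^3,0)\to(\mathbb C^4,0)$, which yields $\mu_I(h)/\aecod(h)\leq\frac14(3+1)^2=4$. The crucial numerical point is that $\frac14(n+1)^2=n+1$ precisely when $n=3$, both sides being equal to $4$; so in this single dimension the bound furnished by Theorem~\ref{conj2aug} is not merely finite but coincides with the bound $n+1$ asserted by Conjecture 2. Therefore $h$ satisfies Conjecture 2, as required.

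The only thing to verify is that nothing beyond dimensions $m=1,2$ enters. In the proof of Theorem~\ref{conj2aug}, an augmentation $h$ with $n=3$ has base $f\colon(\mathbb C^k,0)\to(\mathbb C^{k+1},0)$ with $1\leq k\leq n-1=2$, so Conjecture 2 is invoked only for $f$ with $k\in\{1,2\}$, and Liu's inequality $\mu(g)/\tau(g)\leq n-k$ is applied to the augmenting function $g$ in $n-k\in\{1,2\}$ variables. Both ingredients are available, so no additional argument is needed. I do not anticipate any genuine obstacle here: the substance lies entirely in Theorem~\ref{conj2aug} and in the coincidence $\frac14(n+1)^2=n+1$ at $n=3$; everything else is bookkeeping.
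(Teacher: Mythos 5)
Your proof is correct and follows exactly the paper's route: invoke Proposition~\ref{conj2n=1} for $m=1$, the hypothesis for $m=2$, and then Theorem~\ref{conj2aug} at $n=3$, where the bound $\tfrac14(n+1)^2$ coincides with $n+1$. The extra paragraph tracing which dimensions enter the proof of Theorem~\ref{conj2aug} is sound but not strictly needed, since the theorem's hypothesis already requires only $m<n$.
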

\begin{proof}
By Proposition \ref{conj2n=1}, Conjecture 2 is true for $n=1$. Together with the hypothesis we get that Conjecture 2 is satisfied for all $n<3$, therefore, by Theorem \ref{conj2aug}, for all augmentations in $n=3$, the quotient will be less than $\frac{1}{4}(3+1)^2=4$.
\end{proof}

We can refine Theorem \ref{conj2aug} for the cases of augmenting functions $g\colon(\bbk^i,0)\to(\bbk,0)$ with $ri=2,3$.
When $i=2$, it was proven in \cite{almiron1} that $\frac{\mu(g)}{\tau(g)} <\frac{4}{3}$ is a sharp upper bound, in the sense that one can find functions whose quotient is as close to $\frac{4}{3}$ as desired.
In the case $i=3$ it is conjectured that $\frac{\mu(g)}{\tau(g)} < \frac{3}{2}$ is an upper bound: in \cite{almiron2} it is seen that this is true if Durfee's conjecture holds (see the mentioned reference for more information).

\begin{prop}
Suppose that $f:(\mathbb C^{n-i},0)\to(\mathbb C^{n-i+1},0)$ admits an OPSU 
$F$ and satisfies Conjecture 2. Let $g:(\mathbb C^{i},0)\to(\mathbb C,0)$ be a 
function germ, $i=2,3$. Then
\begin{enumerate}
\item If $i=2$, $A_{F,g}(f):(\mathbb C^{n},0)\to(\mathbb C^{n+1},0)$ satisfies 
Conjecture 2 for all $n<7$.
\item If $i=3$, and Durfee's conjecture holds, $A_{F,g}(f):(\mathbb C^{n},0)\to(\mathbb C^{n+1},0)$ satisfies 
Conjecture 2 for all $n<8$.
\end{enumerate}
\end{prop}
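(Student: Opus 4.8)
The plan is to run the proof of Theorem \ref{conj2aug} but to feed in the sharp low-dimensional bounds for $\mu/\tau$ in place of Liu's estimate. Write $h=A_{F,g}(f)$, so that $f\colon(\bbc^{n-i},0)\to(\bbc^{n-i+1},0)$ admits the OPSU $F$, satisfies Conjecture 2, and $g\colon(\bbc^i,0)\to(\bbc,0)$ with $i\in\{2,3\}$; as throughout the section we take $g$ to define an isolated hypersurface singularity, and we may assume $f$ and $g$ are both singular (otherwise $h$ is stable and there is nothing to prove), so that $\aecod(f)\tau(g)\ge 1$. First I would combine the lower bound of Theorem \ref{thm_ineq_iff}, namely $\aecod(f)\tau(g)\le\aecod(h)$, with the multiplicativity of the image Milnor number $\mu_I(h)=\mu_I(f)\mu(g)$ (Corollary 6.4 in \cite{houstoninventiones} together with Corollary 2.17 in \cite{gimeneznunoweak}), to obtain
$$\frac{\mu_I(h)}{\aecod(h)}\le\frac{\mu_I(f)\,\mu(g)}{\aecod(f)\,\tau(g)}=\frac{\mu_I(f)}{\aecod(f)}\cdot\frac{\mu(g)}{\tau(g)}.$$

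Next I would bound the two factors separately. Since $f$ satisfies Conjecture 2 and maps $(\bbc^{n-i},0)$ to $(\bbc^{n-i+1},0)$, the first factor is at most $n-i+1$. For the second factor I would invoke the sharp quotient bounds: by \cite{almiron1}, $\mu(g)/\tau(g)<\tfrac{4}{3}$ when $i=2$, and by \cite{almiron2}, $\mu(g)/\tau(g)<\tfrac{3}{2}$ when $i=3$, the latter conditional on Durfee's conjecture. This yields $\mu_I(h)/\aecod(h)<\tfrac{4}{3}(n-1)$ in case (1) and $\mu_I(h)/\aecod(h)<\tfrac{3}{2}(n-2)$ in case (2).

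It then remains to identify the range of $n$ for which these strict bounds force $\mu_I(h)/\aecod(h)\le n+1$, i.e. Conjecture 2 for $h$. In case (1), $\tfrac{4}{3}(n-1)\le n+1$ is equivalent to $n\le 7$, which in particular gives the conclusion for all $n<7$; in case (2), $\tfrac{3}{2}(n-2)\le n+1$ is equivalent to $n\le 8$, giving the conclusion for all $n<8$. (Recalling that $f$ is a germ on $(\bbc^{n-i},0)$ forces $n>i$, so the effective ranges are $3\le n\le 6$ and $4\le n\le 7$ respectively.)

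The main obstacle here is essentially non-existent: the statement is a corollary-type assembly of Theorem \ref{thm_ineq_iff}, the multiplicativity of $\mu_I$ under augmentation, Conjecture 2 for the base germ, and the two- and three-variable $\mu/\tau$ bounds of \cite{almiron1,almiron2}. The only points that need care are bookkeeping: checking that the hypotheses of the cited results are in force (isolated singularity of $g$, existence of the OPSU, singularity of $f$), and, in part (2), propagating the dependence on Durfee's conjecture all the way to the conclusion, since the bound $\mu(g)/\tau(g)<\tfrac{3}{2}$ is itself only conditional.
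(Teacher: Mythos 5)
Your proposal is correct and follows essentially the same route as the paper's proof: combine the lower bound $\aecod(f)\tau(g)\leq\aecod(h)$ from Theorem \ref{thm_ineq_iff} with the multiplicativity $\mu_I(h)=\mu_I(f)\mu(g)$, then feed in the sharp $\mu/\tau$ bounds from \cite{almiron1,almiron2} and solve the resulting linear inequality in $n$. One small point in your favour: you correctly write $\mu_I(h)/\aecod(h)\leq\mu_I(f)\mu(g)/(\aecod(f)\tau(g))$ where the paper's displayed chain writes an equality sign, which is only valid under extra hypotheses ($g$ quasi-homogeneous or $F$ substantial); the inequality is what is actually available and is all that is needed.
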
 
\begin{proof}
For $i=2$, by \cite{almiron1}, $\frac{\mu(g)}{\tau(g)}<\frac{4}{3}$, so 
$$\frac{\mu_I(A_{F,g}(f))}{\aecod(A_{F,g}(f))}=\frac{\mu_I(f)\mu(g)}{\aecod(f)\tau(g)
}\leq \frac{4}{3}(n-1).$$ Now $\frac{4}{3}(n-1)<n+1$ if $n<7$.

Similarly, for $i=3$, by \cite{almiron2}, $\frac{\mu(g)}{\tau(g)}<
\frac{3}{2}$, so $$\frac{\mu_I(A_{F,g}(f))}{\aecod(A_{F,g}(f))}=\frac{\mu_I(f)\mu(g)}{\aecod(
f)\tau(g)}\leq \frac{3}{2}(n-2).$$ Now $\frac{3}{2}(n-2)<n+1$ if $n<8$.
\end{proof}

\section{Characterization of agumentations}

In \cite{houstonaug2}, Theorem 4.6, Houston claimed the following: if $f$ admits an OPSU $F$, then:
$$f \text{ is an augmentation } \iff\dim_\bbk\tilde{\tau}(F) \geq 1$$

His proof relies on the following statement, which appears as Lemma 2.6 in \cite{houstonaug2}:
suppose that $f\colon(\bbk^n,0)\to(\bbk^q,0)$ with $n\geq q$ is a smooth mono-germ of the form $f = \pi\circ i$, with $i\colon(\bbk^n,0)\to(\bbk^{n+r},0)$, $r\geq 0$ an immersion and $\pi\colon(\bbk^{n+r},0)\to(\bbk^q,0)$ a submersion. Then, $\dim_\bbk\tilde\tau(f)\geq q-r$.

This statement is proven false by the following counterexample: let $i\colon(\bbk^3,0)\to(\bbk^4,0)$, $\pi\colon(\bbk^4,0)\to(\bbk^3,0)$ and $\psi\colon(\bbk^4,0)\to(\bbk^4,0)$ be given by:
\begin{align*}
i(X,Y,Z) &= (X,Y,Z,0)\\
\psi(X,Y,Z,\Lambda) &= (\Lambda + X^3 + XY, Y, Z, X)\\
\pi(X,Y,Z,\Lambda) & = (X,Y,Z)
\end{align*}
Then, $\pi\circ\psi$ is a submersion and:
$$\pi\circ\psi\circ i(X,Y,Z) = (X^3 + XY,Y,Z)$$
This composition, which plays the role of $f$ in the statement, is a cuspidal edge, that is:
$$\dim_\bbk\tilde\tau(\pi\circ\psi\circ i) = 1 < 3-1  $$
and so it contradicts Houston's lemma.

This counterexample appears when trying to reproduce the proof of Theorem 4.6 in \cite{houstonaug2} to determine if the map-germ $C_3(x,y) = (x,y^2,xy^3 + x^3 y)$, which appears in Mond's classification of simple map-germs of $\bbk^2\to\bbk^3$ (\cite{mond2en3}), is an augmentation.
It admits the OPSU:
$$\tilde C_3(x,y,\lambda) = (x,y^2, xy^3 + x^3y + \lambda y, \lambda)$$
which has $\tilde\tau (F) = 1$.
We will see that, in fact, $C_3$ is not an augmentation and is thus a counterexample to Houston's theorem.

First we need a lemma which allows us to calculate what Tjurina number the augmenting function must have. Notice that given an augmentation $A_{F,g}(f)(x,z) = (f_{g(z)}(x),z)$ there is a natural OPSU given by $(f_{g(z)+\lambda}(x),z,\lambda)$.

\begin{lem}\label{lemma_tangencia}
Assume $a\colon(\bbk^n,0)\to(\bbk^p,0)$ is an augmentation and let $A\colon(\bbk^{n+1},0)\to(\bbk^{p+1},0)$ be the natural OPSU.
Then, the order of tangency at the origin of the natural immersion $i\colon(\bbk^p,0)\to(\bbk^{p+1},0)$ with the isosingular locus of $A$, is equal to the Tjurina number of the augmenting function.
\end{lem}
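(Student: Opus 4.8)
The plan is to straighten out the augmenting function with a single change of coordinates, after which the order of tangency becomes a direct Tjurina-number computation.

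First I would fix a presentation $a = A_{F,g}(f)$ with $f\colon(\bbk^{n-d},0)\to(\bbk^{p-d},0)$, $F(x,\lambda)=(f_\lambda(x),\lambda)$ an OPSU of $f$, and $g\colon(\bbk^d,0)\to(\bbk,0)$ with isolated singularity. Then $a(x,z)=(f_{g(z)}(x),z)$, the natural OPSU is $A(x,z,\lambda)=(f_{g(z)+\lambda}(x),z,\lambda)$, and the natural immersion $i$ has image the hyperplane $\{\Lambda=0\}\subset(\bbk^{p+1},0)$. Then I would apply the diffeomorphisms $\Phi(x,z,\lambda)=(x,z,\lambda+g(z))$ in the source and $\Psi(X,z,\Lambda)=(X,z,\Lambda+g(z))$ in the target: substituting, $\Psi\circ A\circ\Phi^{-1}(x,z,\mu)=(f_\mu(x),z,\mu)$, so $A\sim_\A F\times\mathrm{Id}_d$ through these explicit diffeomorphisms. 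Since $\Phi$ and $\Psi$ alter only the parameter coordinate, $\Psi$ carries the isosingular locus $\mathrm{Iso}(A)$ onto $\mathrm{Iso}(F\times\mathrm{Id}_d)$ and carries $i(\bbk^p)=\{\Lambda=0\}$ onto the graph $\{\Lambda=g(z)\}$.

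Next I would use that the isosingular locus of a product splits, $\mathrm{Iso}(F\times\mathrm{Id}_d)=\mathrm{Iso}(F)\times\bbk^d$, where $\mathrm{Iso}(F)$ is a smooth germ of dimension $\dim\tilde\tau(F)$ because $F$ is stable. Parametrising $\mathrm{Iso}(F)$ by $\sigma$ and writing $\rho(\sigma)$ for the restriction to $\mathrm{Iso}(F)$ of the parameter coordinate in the target of $F$, one gets a parametrisation of $\mathrm{Iso}(F\times\mathrm{Id}_d)$ by $(\sigma,z)$ on which the defining function $\Lambda-g(z)$ of the transported $i(\bbk^p)$ restricts to $\rho(\sigma)-g(z)$; the order of tangency is the Tjurina number of this germ. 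In the setting that matters here — in particular when $f$ is a parametrised plane curve, so $F$ is a cross-cap and $\mathrm{Iso}(F)$ is a point — the variable $\sigma$ drops out, the restriction is $-g(z)$, and the order of tangency equals $\tau(-g)=\tau(g)$, as wanted.

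The hard part will be pinning down the notion of order of tangency and handling $\mathrm{Iso}(F)$ when $\dim\tilde\tau(F)>0$: there one must check that $\rho$ does not affect the Tjurina count — for instance that $\rho$ is a nondegenerate quadratic form, which is what happens for the stable unfolding of $S_1$, so that the Tjurina number of $q\oplus(-g)$ with $q$ a Morse form is again $\tau(g)$ after the usual reduction modulo the Morse variables. For the applications of this lemma below, in particular the analysis of $C_3$, the base is a plane curve and this subtlety does not arise, so the computation above suffices.
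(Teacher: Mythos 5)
Your proof follows essentially the same route as the paper's: fix a presentation $a = A_{F,g}(f)$, form the natural OPSU, straighten it by a change of coordinates, and read off the order of tangency as a Tjurina number. The one deviation is that you also apply a target change. The paper uses the source change $(x,z,\lambda)\mapsto(x,z,\lambda+g(z))$ alone, which leaves the hyperplane $i(\bbk^p)=\lbrace\Lambda=0\rbrace$ fixed and shows the OPSU is trivial along the graph $M=\lbrace(0,z,-g(z))\rbrace$; the tangency of $\lbrace\Lambda=0\rbrace$ with $M$ is then $\tau(-g)=\tau(g)$ by inspection. Your extra target change $\Psi$ turns the OPSU into $F\times\mathrm{Id}_d$ but carries the hyperplane to $\lbrace\Lambda=g(z)\rbrace$, forcing you to parametrise $\mathrm{Iso}(F)$ and introduce $\rho$; when $\mathrm{Iso}(F)$ is a point the two computations coincide, so the target change buys nothing.

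Where your write-up is actually more careful than the paper's is the observation that when $\dim\tilde\tau(F)>0$ the isosingular locus of $A$ is strictly larger than $M$ (its dimension is $\dim\tilde\tau(F)+d$, not $d$), so $\rho$ genuinely enters and the stated equality needs an extra argument or a hypothesis. The paper's proof passes over this in silence: it tacitly identifies $\mathrm{Iso}(A)$ with $M$, which is correct precisely when $\dim\tilde\tau(F)=0$. Your proposed remedy (that $\rho$ is a nondegenerate quadratic, so the Tjurina number survives a Morse splitting) is plausible for the example you cite but is not established in general. Both proofs are therefore complete only in the $\tilde\tau(F)=0$ case, which is the one used in the paper (augmentations of parametrised plane curves, in the $C_3$ argument), so your restriction is appropriate even if the lemma's statement reads more broadly.
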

\begin{proof}
Since $a$ is an augmentation, there must exist a smooth function $g\colon(\bbk^d,0)\to(\bbk,0)$ and a smooth map-germ $f\colon(\bbk^{n-d},0)\to(\bbk^{p-d},0)$ that admits an OPSU $F(x,\lambda) = (f_\lambda(x),\lambda)$ such that:
$$a\sim_{\A} A_{F,g}(f)(x,z) = (f_{g(z)}(x),z)$$
Then the following is an OPSU for the augmentation:
$$\tilde F(x,z,\lambda) = (f_{g(z)+\lambda}(x),z,\lambda)$$
which is equivalent after a change in the source to:
$$(f_{\lambda}(x),z,\lambda - g(z))$$
and therefore is trivial along the germ at 0 of the manifold $M = \lbrace (0,z,-g(z)) \in \bbk^{p+1} : z\in\bbk^d\rbrace$.
The image of the inclusion $i$ is the plane $\lbrace(s,t,0)\in\bbk^{p+1} : (s,t)\in\bbk^{p-d}\times\bbk^d\rbrace$, which has order of tangency $\tau(g)$ with $M$.
\end{proof}

\begin{prop}
The map-germ $C_3(x,y) = (x,y^2, xy^3 + x^3y)$ is not an augmentation.
\end{prop}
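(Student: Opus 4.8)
The plan is to assume $C_3$ is an augmentation, say $C_3\sim_\A A_{F,g}(f)$ for a base $f$ with an OPSU $F$ and an augmenting function $g$, and to pin down $\tau(g)$ by means of Lemma~\ref{lemma_tangencia}, thereby reaching a numerical contradiction through Theorem~\ref{thm_ineq_iff}. Since $C_3$ has corank $1$, its $1$-parameter stable unfolding is, up to equivalence of unfoldings, essentially unique (cf.\ \cite{notaopsus}), so the natural OPSU of the augmentation structure can be taken to be the explicit germ $\tilde C_3(x,y,\lambda)=(x,y^2,xy^3+x^3y+\lambda y,\lambda)$ recorded above. By Lemma~\ref{lemma_tangencia} it then remains to compute the order of tangency at $0$ of the hyperplane $\{\Lambda=0\}$ (the image of the natural immersion $i\colon(\bbk^3,0)\to(\bbk^4,0)$) with the isosingular locus of $\tilde C_3$, and to check it equals $2$.

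The key computation is the following. The source change $u=xy^2+x^3+\lambda$ is a diffeomorphism germ (since $\partial u/\partial\lambda=1$); after it, and after the target change replacing the last coordinate $X_4$ by $X_4+X_1X_2+X_1^3$, the germ $\tilde C_3$ becomes $(x,y,u)\mapsto(x,y^2,yu,u)$, i.e.\ the cross-cap in the variables $(y,u)$ crossed with the identity in $x$. Since the isosingular locus of a cross-cap is just its cross-cap point, the isosingular locus of $\tilde C_3$ is the $x$-axis in these coordinates, which pulls back to the curve $\{(t,0,0,-t^3):t\in\bbk\}$ in the original target coordinates. Restricting the parameter function $\Lambda$ to this curve yields $-t^3$, which vanishes at $t=0$ to order $3$; in the normalization of Lemma~\ref{lemma_tangencia} (under which $\{(0,z,-z^{m})\}$ has order of tangency $m-1$ with $\{\Lambda=0\}$) this is order of tangency $2$. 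Hence $\tau(g)=2$.

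To conclude: a function with Tjurina number $2$ is $\R$-equivalent to the $A_2$ singularity, hence quasi-homogeneous with $\mu(g)=\tau(g)=2$; Theorem~\ref{thm_ineq_iff} then forces $\aecod(C_3)=\aecod(A_{F,g}(f))=\aecod(f)\,\tau(g)=2\,\aecod(f)$, an even integer, contradicting $\aecod(C_3)=3$ (Mond's list \cite{mond2en3}). I expect the delicate step to be the first one — ensuring that the natural OPSU of any augmentation realizing $C_3$ is equivalent, as an unfolding (so that the order of tangency above is well defined), to the displayed $\tilde C_3$, i.e.\ the essential uniqueness of the OPSU of a corank $1$ germ — together with getting the isosingular locus and the order of contact exactly right in the second step. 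An alternative that bypasses the uniqueness issue is to note that the base $f$ must be a parametrized plane curve admitting an OPSU, hence $\A$-equivalent to some $f_k=(y^2,y^{2k+1})$, and $g$ an $A_{m-1}$ singularity; then $\aecod(C_3)=\aecod(f_k)\,\tau(g)=k(m-1)=3$ leaves only $(k,m)\in\{(1,4),(3,2)\}$, whose augmentations are $\A$-equivalent to $S_3$ and $B_3$ from Mond's list, both distinct from $C_3$.
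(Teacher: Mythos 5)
Your main proof follows essentially the same route as the paper's: both rest on Lemma~\ref{lemma_tangencia} together with the explicit coordinate changes that exhibit $\tilde C_3$ as the trivial extension of a cross-cap and show its isosingular locus meeting $\{\Lambda=0\}$ to order $2$. What you do differently is the order of the deductions: you read off $\tau(g)=2$ directly from the tangency and then get a parity contradiction ($\aecod(C_3)=2\,\aecod(f)$ cannot equal $3$), whereas the paper first uses the bound $\aecod(C_3)\geq\aecod(f)\tau(g)$ to restrict $\tau(g)\in\{1,3\}$, disposes of $\tau(g)=3$ by computing the augmentation and getting $S_3$, and only then uses the tangency ($=2$) to contradict $\tau(g)=1$. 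Your parity argument is a genuinely nicer way to finish.

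Two cautions, though. First, the ``essential uniqueness'' assertion you cite from \cite{notaopsus} is stronger than anything that reference provides: if OPSUs of corank-$1$ germs were essentially unique up to equivalence of unfoldings, most of Sections~5--8 of this paper would be superfluous. What is actually needed is that the order of tangency extracted from the specific OPSU $\tilde C_3$ coincides with that of the natural OPSU coming from the putative augmentation structure; since the tangency with $\{\Lambda=0\}$ is a $\phi$-equivalence invariant (the target diffeomorphism preserves $\{\Lambda=0\}$), this reduces to a $\phi$-equivalence statement which you assert rather than establish — but the paper's own proof makes the same silent identification, so you are at no greater disadvantage there. Second, your alternative argument is not self-contained at the point in the paper where this proposition sits: in the case $(k,m)=(3,2)$ you need the augmentation of $(y^2,y^7)$ by a Morse function to be $B_3$ independently of the OPSU, which is exactly what Proposition~\ref{prop_opsu} does \emph{not} give (it needs $\aecod f=1$) and what the paper explicitly declines to assume here; that independence is only settled later via Theorem~\ref{teo_opsu_indep} and Example~\ref{ex_cusps_cross}. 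Your main proof, which sidesteps this, is therefore the stronger of your two arguments.
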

\begin{proof}
Assume $C_3$ is an augmentation, that is, assume there exist some base map-germ $f\colon(\bbk,0)\to(\bbk^2,0)$, an OPSU $F(x,\lambda) = (f_\lambda(x),\lambda)$ of $f$, and a smooth function $g\colon(\bbk,0)\to(\bbk,0)$ such that $C_3\sim_\mathcal{A} A_{F,g}(f)$.

By Theorem \ref{thm_ineq_iff}:
$$3 = \aecod({C_3}) = \aecod\left({A_{F,g}(f)}\right) \geq \aecod (f)\tau(g)$$
This implies $3\geq \tau(g)$, which can only be true if $g$ is a simple function.
Since every simple function is $\R$-equivalent to a quasi-homogeneous function, we can apply Theorem \ref{thm_ineq_iff} again to see that:
$$3 = \aecod{\left(A_{F,g}(f)\right)} = \aecod (f)\tau(g)$$
This leaves us with two possibilities: either $\aecod (f) = 1$, or $\aecod (f) = 3$.
Assume first that $\aecod (f) = 1$, then $f$ is equivalent to $(y^2,y^3)$.
Also, $\tau(g) = 3$, which means that by Proposition \ref{prop_requiv_augm_aequiv} we only need to check the augmentation via $g(x) = x^4$.
By Proposition \ref{prop_opsu}, the class of the augmentation does not depend on the chosen OPSU.
Take for instance $F(\lambda, y) = (\lambda,y^2,y^3+\lambda y)$, which is stable, then:
$$A_{F,g}(f)(x,y) = (x,y^2,y^3+x^4y)$$
which lies in the equivalence class of $S_3$ as seen in \cite{mond2en3}.

Therefore, the only possiblity that remains is that $\aecod (f) = 3$ and $\tau(g) = 1$, which implies that $C_3$ must be the augmentation of $(y^2,y^7)$ via a quadratic function, $g(x) = x^2$.
Notice that augmenting by the OPSU $(y^2,y^7+\lambda y,\lambda)$ and via $x^2$ would get us $B_3$ in Mond's list \cite{mond2en3}, but since Proposition \ref{prop_opsu} does not apply here, we cannot be certain that augmenting via another OPSU won't give us a different map-germ (see Example \ref{ex_cusps_cross} which elaborates on this discussion).

In any case, $C_3$ admitted the OPSU:
$$C(x,y,\lambda) = (x,y^2,x^3y+xy^3+y\lambda,\lambda)$$
which is equivalent after a change in source to:
$$(x,y^2,\lambda y + xy^3,\lambda-x^3)$$
and therefore is trivial along the (germ at 0 of the) manifold $M = \lbrace (x,0,0,x^3) \in \bbk^{p+1} : x\in\bbk\rbrace$, which has order of tangency  2 with the image of $i(a,b,c) = (a,b,c,0)$.
Yet according to Lemma \ref{lemma_tangencia}, and since we have seen $g$ must be quadratic, this order of tangency should be equal to $\tau(g) = 1$.
This is a contradiction, and therefore, $C_3$ is not an augmentation.
\end{proof}

The analytic stratum of the OPSU is too coarse to properly detect augmentations on its own: to fix Theorem 4.6 in \cite{houstonaug2} we need extra hypothesis on how exactly is the OPSU made trivial along certain parameters.

Assume that $\mononp f$ admits an OPSU, $F\colon(\bbk^{n+1},0)\to(\bbk^{p+1},0)$ with analytic stratum of positive dimension $d = \dim_\bbk\tilde{\tau}(F) \geq 1$.

Define the inclusion $i\colon(\bbk^p,0)\to(\bbk^{p+1},0)$ given by $i(X) = (X,0)$ and, for each $1 \leq s\leq d$, define the projection over the first $p+1-s$ coordinates as $\pi_s\colon(\bbk^{p+1-s}\times\bbk^s,0)\to (\bbk^{p+1-s},0)$

Now, for each $1\leq s\leq d$ we can find $\psi_s\in\text{Diff}_0(p), \phi_s\in\text{Diff}_0(n)$ such that $\psi_s\circ F\circ \phi_s = H_s\times \Id_s$ for some stable map-germ $H_s$.
For each such $\psi_s$ we can define a {\it trivializer} of degree $s$ of $F$ as a composition $T_s(F) = \pi_s\circ\psi_s\circ i$.
These are not uniquely defined for each $s$.
If $\dim_\bbk\tilde\tau \left(T_{s}(F)\right) \geq p-s $, then following Houston's original proof, we can build the following diagram:

\begin{equation}\label{eq_houston_diagram}
\xymatrix{
(\bbk^{n+1 - s},0)\ar[r]^{H_s} & (\bbk^{p+1 - s},0)\ar[r]^\Psi & (\bbk^{p-s}\times\bbk,0)\ar[r]^(.6){pr} & (\bbk,0) \\
(\bbk^{n+1 - s}\times\bbk^s,0) \ar[u] \ar[r]^{H_s\times\Id_s} & (\bbk^{p+1 - s}\times\bbk^s,0)\ar[u]^{\pi_s} & &  \\
(\bbk^{n+1},0) \ar[r]^F\ar[u]^{\phi_s} & (\bbk^{p+1},0)\ar[u]^{\psi_s} & & \\
(\bbk^n,0) \ar[r]^f\ar[u]& (\bbk^p,0)\ar[u]^i \ar[r]^\Phi & (\bbk^{p-s}\times\bbk^s,0)\ar[uuu]^\gamma &
   }
\end{equation}
for some map-germ $\gamma\colon(\bbk^p,0)\to(\bbk^{p+1-s},0)$ of the form:
$$\gamma(X_1,\ldots,X_{p-s},X_{p-s+1},\ldots,X_p) = (X_1,\ldots,X_{p-s},g(X_{p-s+1},\ldots,X_{p}))$$
with $g\in\ofu_s$, $\Psi,\Phi$ germs of diffeomorphisms and $pr$ the natural projection on the last component.

\begin{teo}\label{thm_augcar_fix}
Assume $\mononp f$ is a smooth map-germ that admits an OPSU, $F$.
Then, $f$ is an augmentation if and only if $F$ admits a trivializer $T_s(F)$ of degree $1\leq s \leq \dim_\bbk\tilde{\tau} (F)$ such that $\dim_\bbk\tilde{\tau}(T_s(F))\geq p-s$

\end{teo}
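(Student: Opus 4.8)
\emph{Overall approach.} I would prove the two implications separately. The ``if'' direction is essentially Houston's original argument for Theorem~4.6 of \cite{houstonaug2}, but now the extra hypothesis $\dim_\bbk\tilde\tau(T_s(F))\ge p-s$ supplies precisely the step (his false Lemma~2.6) that was missing. The ``only if'' direction I would prove by exhibiting a good trivializer explicitly on the \emph{natural} OPSU of an augmentation and then transporting it to the given $F$.

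\emph{The ``if'' direction.} Suppose $F$ admits a trivializer $T_s(F)=\pi_s\circ\psi_s\circ i$ of degree $s$, so that $\psi_s\circ F\circ\phi_s=H_s\times\Id_s$ with $H_s$ stable, and $T_s(F)\colon(\bbk^p,0)\to(\bbk^{p+1-s},0)$ has $\dim_\bbk\tilde\tau(T_s(F))\ge p-s$, i.e. its analytic stratum has codimension at most $1$ in the target. By the characterisation of the analytic stratum via non-vanishing liftable vector fields (Chapter~7 and pages~281--283 of \cite{nunomond}), any map-germ $a$ with $\dim_\bbk\tilde\tau(a)\ge c$ satisfies $a\sim_\A a_0\times\Id_c$; applied to $T_s(F)$, where the complementary factor is a germ $(\bbk^s,0)\to(\bbk,0)$, this means $T_s(F)$ is $\A$-equivalent to a germ of the form $\gamma(X_1,\dots,X_p)=(X_1,\dots,X_{p-s},g(X_{p-s+1},\dots,X_p))$ with $g\in\ofu_s$ (a coordinate function in the borderline case $\dim_\bbk\tilde\tau(T_s(F))=p+1-s$). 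This is exactly the normal form that lets one complete the diagram \ref{eq_houston_diagram}: its lower left square is a transverse fibre square, $H_s$ (being stable) can be presented as a $1$-parameter stable unfolding of its restriction $h$ to the relevant slice, and chasing the diagram shows $f\sim_\A A_{H_s,g}(h)$, so $f$ is an augmentation.

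\emph{The ``only if'' direction.} Assume $f\sim_\A A_{F_0,g}(f_0)$ for $f_0\colon(\bbk^{n-d},0)\to(\bbk^{p-d},0)$ with OPSU $F_0$ and $g\in\ofu_d$. Exactly as in the proof of Lemma~\ref{lemma_tangencia}, the natural OPSU $\tilde F(x,z,\lambda)=(f_{0,g(z)+\lambda}(x),z,\lambda)$ becomes, after the source change $\lambda\mapsto\lambda-g(z)$ and the compensating target change $\Lambda\mapsto\Lambda+g(z)$, equal to $F_0\times\Id_d$ up to reordering coordinates; in particular $\dim_\bbk\tilde\tau(\tilde F)\ge d$. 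Choosing diffeomorphisms realising $\psi_d\circ\tilde F\circ\phi_d=F_0\times\Id_d$, a direct computation shows the corresponding degree-$d$ trivializer is $\A$-equivalent to $\Id_{p-d}\times g$. Since $\dim_\bbk\tilde\tau(g)\ge 0$ for any function germ, $\dim_\bbk\tilde\tau(T_d(\tilde F))=(p-d)+\dim_\bbk\tilde\tau(g)\ge p-d$, so $\tilde F$ admits a good trivializer of degree $d$. Finally, the given $F$ is $\A$-equivalent to $\tilde F$ because the $1$-parameter stable unfolding of a germ is unique up to $\A$-equivalence (\cite{notaopsus}), and I would check that the good-trivializer property survives this equivalence (composing the defining diffeomorphisms of $T_d(\tilde F)$ with it, and noting $\dim_\bbk\tilde\tau(\cdot)$ is $\A$-invariant), so $F$ admits one as well.

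\emph{Main obstacle.} I expect the delicate point to be in the ``if'' direction: one must verify that $\dim_\bbk\tilde\tau(T_s(F))\ge p-s$ genuinely forces the \emph{augmentation-type} normal form $\gamma$ — with the $p-s$ split-off coordinates being exactly those that $\pi_s$ keeps — rather than merely a product with $\Id_{p-s}$ in incompatible coordinates, and then that the completed diagram \ref{eq_houston_diagram} is dimensionally consistent, so that $H_s$ really is a $1$-parameter stable unfolding of a germ whose augmentation by $g$ is $\A$-equivalent to $f$. A secondary technical point is the last step of the ``only if'' direction: since the trivializer construction is not canonical, transferring a good trivializer from $\tilde F$ to an arbitrary OPSU requires either choosing the $\A$-equivalence to respect the unfolding structure or tracking its effect on $\pi_s\circ\psi_s\circ i$ directly.
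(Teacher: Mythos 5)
Your proposal follows exactly the paper's two-step argument: the ``if'' direction completes Houston's diagram \eqref{eq_houston_diagram} by using the new hypothesis $\dim_\bbk\tilde\tau(T_s(F))\ge p-s$ in place of his false Lemma~2.6 and then running the transversality/transverse-fibre-square chase to get $f\sim_\A A_{\mathcal H,g}(h_0)$, while the ``only if'' direction exhibits the degree-$s$ trivializer of the natural OPSU as $\Id_{p-s}\times g$ via the same source and target changes $\tilde\phi,\tilde\psi$ the paper uses. The one point where you go beyond the paper is the final transport of the good-trivializer property from the natural OPSU back to the given $F$: the paper's proof in fact stops at the natural OPSU without addressing this, and your flagged concern (that one must choose the $\A$-equivalence between OPSUs to preserve the $\{\Lambda=0\}$ slice, or else track $\pi_s\circ\psi_s\circ\Psi^{-1}\circ i$ directly) is a genuine subtlety, especially since the paper itself shows in later sections that OPSUs of a fixed germ need not be $\phi$-equivalent.
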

\begin{proof}
Since $\dim_\bbk\tilde{\tau}\left(T_s(F)\right)\geq p-s$, we can build a diagram like (\ref{eq_houston_diagram}).


Now we follow a similar argument to the one in Houston's proof:  $H_s$ is transversal to $\pi_s\circ\psi_s\circ i$ and so $d(pr\circ\Psi\circ H)(0) \neq 0$.
Also, $d(pr\circ\Psi\circ\pi_s\circ\psi\circ i)(0) = 0$.
For $\lambda$ near $0$, $(pr\circ\Psi\circ H)^{-1}(\lambda)\cong\bbk^{n-d}$ and $(pr\circ\psi)^{-1}(\lambda)\cong\bbk^{p-d}$.

Define $h_\lambda = H|(pr\circ\psi\circ H)^{-1}(\lambda)\colon(\bbk^{n-d},0)\to(\bbk^{p-d},0)$, then $\mathcal H(x,\lambda) = (h_\lambda(x),\lambda)$ is an OPSU for $h_0$.
Since the largest square of (\ref{eq_houston_diagram}) is a transverse fibre square, $f$ is $\A$-equivalent to $(x,z)\mapsto(h_{g(z)}(x),z)$, i.e. $f\sim_\A A_{\mathcal H,g}(h_0)$.

Assume now that $f$ is an augmentation.
Then it is $\A$-equivalent to some $A_{H,g}(g)$ for some smooth map-germ $h\colon(\bbk^{n-s},0)\to(\bbk^{p-s},0)$ that admits an OPSU $H(x,\lambda) = (h_\lambda(x),\lambda)$, and some augmenting function $g\colon(\bbk^s,0)\to(\bbk,0)$.

$A_{H,g}(h)$ admits the natural OPSU:
$$A(x,z,\lambda) = (h_{g(z)+\lambda}(x),z,\lambda)$$
Define $\tilde\phi\colon(\bbk^{n-s}\times\bbk \times \bbk^s,0)\to(\bbk^{n-s}\times\bbk^s \times \bbk,0)$ and $\tilde\psi\colon(\bbk^{p-s}\times\bbk^s\times \bbk,0)\to(\bbk^{p-s}\times\bbk \times \bbk^s,0)$ by $\tilde\phi(x,\lambda,z) = (x,z,\lambda - \phi(z))$ and $\tilde\psi(X,Z,\Lambda) = (X,\Lambda +g(Z),Z)$.
Then $H\times\Id_s = \tilde\psi\circ A\circ \tilde\phi$, and $\pi_s\circ\tilde\psi\circ i(X, Z) = (X, g(z))$, hence $\tilde\tau(\pi_s\circ\tilde\psi\circ i) = p-s$.


\end{proof}

\begin{rem}
In Houston's original proof, the second implication was obtained by considering that every augmentation is $\A$-equivalent to the augmentation of a map-germ via a minimal OPSU, in the sense that the OPSU is not $\A$-equivalent to the trivial extension of any other stable map-germ.

As we saw, the OPSU of $C_3$ has analytic stratum of dimension 1 since it is $\A$-equivalent to the trivial extension of the cross-cap.
Any augmentation of $C_3$ is therefore a counterexample to the last statement.


\end{rem}

\begin{ex}
The following examples illustrate two use cases for the previous theorem:
\begin{enumerate}
\item[i)] $C_3$ is not an augmentation because the only possible trivializer is a cuspidal edge (seen at the beginning of this section) and $\dim_\bbk\tilde{\tau}(T_1(F))=1<3-1=2.$
\item[ii)] Now consider the augmentation of $C_3$ given by $f(x,y,z)=(x,y^2,x^3y+xy^3+z^3y,z)$. The natural OPSU $F(x,y,z,\lambda)=(x,y^2,x^3y+xy^3+z^3y+\lambda y,z,\lambda)$ is $\A$-equivalent to $(y^2,\lambda y, \lambda,x,z)$ and so $\dim_\bbk\tilde{\tau} (F)=2$. The trivializer of degree 2 is given by $T_2(F)(X,Y_1,Y_2,Z)=(Y_1,Y_2,X^3+XY_1+Z^3)$ and $\dim_\bbk\tilde{\tau}(T_2(F))=1<4-2=2$. However, $T_1(F)(X,Y_1,Y_2,Z)=(Y_1,Y_2,X^3+XY_1+Z^3,X)$ and $\dim_\bbk\tilde{\tau}(T_1(F))=3\geq 4-1=3$, and so $f$ satisfies the condition in Theorem \ref{thm_augcar_fix} to be an augmentation.
\end{enumerate}
\end{ex}

This characterization can be interpreted geometrically in the following way:


\begin{coro}
A map-germ $f$ is an augmentation if and only if it admits $F$ with $\dim_\bbk\tilde\tau(F) = d \geq 1$ and a trivializer of some degree $1\leq s\leq d$ such that $\tilde\tau(T_s(F))$ is transversal to any axis from $\bbk^{p-d+1}$.
\end{coro}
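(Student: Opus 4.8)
The plan is to translate Theorem~\ref{thm_augcar_fix} into geometric language by examining what the condition $\dim_\bbk\tilde\tau(T_s(F))\geq p-s$ means concretely. Recall that a trivializer $T_s(F) = \pi_s\circ\psi_s\circ i$ is a map-germ $(\bbk^p,0)\to(\bbk^{p+1-s},0)$ obtained by restricting a projection of the stabilized form $H_s\times\Id_s$ of $F$ to the image of the inclusion $i(X)=(X,0)$. Since $F\sim_\A H_s\times\Id_s$ and $H_s$ is stable, the map-germ $H_s\times\Id_s$ has analytic stratum of dimension exactly $s$ along the last $s$ coordinate axes; the image of $i$ generically meets this stratum in lower dimension, and the relevant invariant $\tilde\tau(T_s(F))$ records the tangent space to the analytic stratum of the restricted germ.

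First I would fix the ambient picture: inside $(\bbk^{p+1},0)$, after applying $\psi_s$, the analytic stratum of $H_s\times\Id_s$ is the coordinate subspace $\lbrace 0\rbrace\times\bbk^s$ sitting in the last $s$ coordinates, and $\tilde\tau(F)$ (transported by $d\psi_s$) is this same $\bbk^s$. The hyperplane $i(\bbk^p) = \bbk^p\times\lbrace 0\rbrace$ is then one of the coordinate hyperplanes. Next I would show that $T_s(F)$, as a map-germ on $\bbk^p$, has $\tilde\tau(T_s(F))$ of dimension at least $p-s$ precisely when the hyperplane $i(\bbk^p)$ is \emph{transverse}, inside $\bbk^{p+1}$, to every coordinate axis spanning $\tilde\tau(F) = \bbk^s$ — equivalently, transverse to each of the last $s$ coordinate directions when read off inside the relevant $\bbk^{p-d+1}$ factor. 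The key computational point is the standard fact (from the description of $\tilde\tau$ via liftable vector fields, see pages 281--283 of \cite{nunomond}) that restricting a stable germ along a submanifold transverse to its analytic stratum drops the dimension of $\tilde\tau$ by exactly the codimension of intersection; so $\dim_\bbk\tilde\tau(T_s(F))\geq p-s$ forces the restriction to be as transverse as possible, which is exactly the transversality of $\tilde\tau(T_s(F))$ to the coordinate axes in $\bbk^{p-d+1}$.

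Then I would run the argument in both directions. For the forward implication, if $f$ is an augmentation, Theorem~\ref{thm_augcar_fix} gives a trivializer $T_s(F)$ with $\dim_\bbk\tilde\tau(T_s(F))\geq p-s$, and by the transversality computation above this is equivalent to $\tilde\tau(T_s(F))$ being transversal to every coordinate axis of $\bbk^{p-d+1}$. Conversely, if such a trivializer exists with $\tilde\tau(T_s(F))$ transversal to all the axes, then reversing the dimension count gives $\dim_\bbk\tilde\tau(T_s(F))\geq p-s$, and Theorem~\ref{thm_augcar_fix} concludes that $f$ is an augmentation. The statement is thus essentially a restatement of Theorem~\ref{thm_augcar_fix} once the dictionary ``$\dim_\bbk\tilde\tau(T_s(F))\geq p-s$'' $\leftrightarrow$ ``transversality to the axes'' is in place.

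The main obstacle I anticipate is pinning down precisely which axes are meant and in which ambient space: the statement mentions ``any axis from $\bbk^{p-d+1}$'' rather than $\bbk^{p+1-s}$, so I would need to be careful that when $s<d$ the target of $T_s(F)$ is $\bbk^{p+1-s}$, and the relevant transversality is to the coordinate directions that become the analytic-stratum directions after full stabilization; identifying these correctly with axes of $\bbk^{p-d+1}$ requires tracking the diffeomorphisms $\psi_s,\phi_s$ and the factorization $F\sim_\A H_d\times\Id_d$ through the intermediate $H_s\times\Id_s$. Once that bookkeeping is done, the transversality equivalence is a direct consequence of the behaviour of $\tilde\tau$ under transverse restriction, and no further hard input is needed.
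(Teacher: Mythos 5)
Your proof and the paper's share the idea that the corollary should be a ``dictionary'' re-reading of Theorem~\ref{thm_augcar_fix}, but the dictionary you propose is not the right one, and the equivalence you lean on is neither proved nor correctly stated. You want to identify the condition $\dim_\bbk\tilde\tau(T_s(F))\geq p-s$ with transversality of the hyperplane $i(\bbk^p)$ to coordinate directions inside the ambient $\bbk^{p+1}$. That is a different assertion from the one in the corollary, which concerns transversality of the subspace $\tilde\tau(T_s(F))$ to coordinate axes \emph{inside the target of $T_s(F)$}. You never bridge these two pictures. The ``standard fact'' you invoke — that restricting a stable germ along a submanifold transverse to its analytic stratum drops $\tilde\tau$ by exactly the codimension of intersection — is not taken from the cited pages of \cite{nunomond} in this form, is not obviously true as stated, and in any case is doing the work of the whole proof; citing it without proof is where the gap sits. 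You yourself flag the bookkeeping of ``which axes in which ambient space'' as an anticipated obstacle, and indeed that bookkeeping is exactly where your argument does not close.

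What the paper actually does is much more elementary and does not require any such dictionary. For the forward direction it uses a one-line linear-algebra observation: if a subspace $V$ of $\bbk^m$ is transversal to a one-dimensional coordinate axis $\ell$, then $V+\ell=\bbk^m$ forces $\dim V\geq m-1$; applied to $V=\tilde\tau(T_s(F))$ inside its target this immediately yields the dimension bound required by Theorem~\ref{thm_augcar_fix}. For the converse, the paper writes down the natural OPSU of an augmentation explicitly, composes with the explicit diffeomorphisms $\phi,\psi$ to put it in the form $(h_\lambda(x),\lambda,z)$, reads off $T_s(F)(X,Z)=(X,g(Z))$, and checks directly that its analytic stratum is $\bbk^{p-d}\times\{0\}$, which is transversal to the last axis. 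Your proposal does not attempt this explicit computation for the converse and instead relies on the unestablished equivalence in both directions. To fix your attempt you would need to either (a) abandon the attempted equivalence and instead prove the two directions separately as the paper does, or (b) actually prove, with care about ambient spaces, that transversality of $\tilde\tau(T_s(F))$ to a single coordinate axis inside $\bbk^{p+1-s}$ is equivalent to $\dim\tilde\tau(T_s(F))\geq p-s$ — which is only the elementary ``if'' direction, the ``only if'' direction being false in general (a $(p-s)$-dimensional subspace need not be transversal to any particular axis), so the forward implication in the corollary really does require the explicit backward construction.
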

\begin{proof}
If $\tilde\tau(T_s(F))$ is transversal to any axis, this implies $\dim_\bbk\tilde\tau(T_s(F)) \geq p-d$ and therefore $f$ is an augmentation by Theorem \ref{thm_augcar_fix}.

If $f\colon \bbk^{n'} \to \bbk^{p'}$ is an augmentation then it is equivalent to a germ $ \bbk^{n-d}\times\bbk^d\to\bbk^{p-d}\times\bbk^d$ of the form $(h_{g(z)}(x),z)$ with $(h_\lambda(x),\lambda)$ a stable unfolding of $h_0$.
Therefore it admits the OPSU $F(x,z,\lambda) =  (h_{g(z)+\lambda}(x),z,\lambda)$.
Setting $\phi(x,\lambda,z) = (x,z,\lambda-g(z))$ and $\psi(X,Z,\Lambda) = (X,\Lambda + g(Z),Z)$, we have:
$$\psi\circ F\circ \phi(x,\lambda,z) = (h_\lambda(x),\lambda,z)$$
and $T_s(F) (X,Z) = \pi\circ\psi\circ i (X,Z) = (X,g(Z))$ is transversal to the $p+1$ axis.
\end{proof}


\section{Unfoldings and Augmentations}

Our aim in this section will be understanding how the choice of OPSU affects de $\A$-class of the resulting augmentation.
By Proposition \ref{prop_opsu}, when the $\A_e$-codimension of the augmented map-germ is 1 and the augmenting function is quasi-homogeneous with isolated singularity, the choice is not relevant.
But the proof of this result relies heavily on the fact that, in codimension 1, every OPSU is also a versal unfolding.
Hence, in this case all OPSUs are equivalent as unfoldings after a pull-back with a diffeomorphism.
This setting is too optimistic for higher $\A_e$-codimension map-germs, since it is a very strong condition.

In \cite{manciniruas}, Mancini and Ruas define the following equivalence relation:

\begin{definition}
Let $\phi\colon(\bbk^n,0)\to(\bbk^d,0)$ be a smooth map-germ.
Two map-germs $f,g\colon(\bbk^n,0)\to(\bbk^p,0)$ are said to be {\it $\phi$-equivalent} if there exists a commutative diagram of the form:
\[
\xymatrix{
(\bbk^n,0) \ar[r]^(.4){(f,\phi)}\ar[d]^(.45)h & (\bbk^p\times\bbk^d,0)\ar[d]^(.45)k \ar[r]^(.6)\pi &(\bbk^d,0)\ar[d]^(.45)l \\
(\bbk^n,0) \ar[r]^(.4){(g,\phi)} & (\bbk^p\times\bbk^d,0) \ar[r]^(.6)\pi &(\bbk^d,0)
}
\]
with $h,k,l$ germs of diffeomorphism and $\pi$ the natural projection.
\end{definition}

This equivalence relation is given by a subgroup of $\A$, denoted by $G^\phi$ and given by pairs of diffeomorphisms $(h,k)\in\text{Diff}_0(n)\times\text{Diff}_0(p+d)$ such that for some $l\in\text{Diff}_0(d)$, $l\circ \phi\circ h^{-1} = \phi$, $\pi\circ k = l\circ \pi$. It acts over map-germs of the form $\mnp f \bbk$ as $(h,k)\cdot f = \pi\circ k\circ (f,\phi)\circ h^{-1}$.
If $f$ and $\phi$ are as in the definition above, $TG_e^\phi f$ is defined as the set of $\sigma\in\theta(f)$ such that there exist $\xi\in\theta_n$, $\eta\in\theta_{p+d/d}$ and $\mu\in\theta_d$ so that:
\begin{equation*}
\left\lbrace\begin{split}
\sigma & =  df(\xi) + \eta(f,\phi)\\
0 & =  d\phi(\xi) + \mu\circ\phi
\end{split}\right.
\end{equation*}
Here  $\theta_{p+d/d}$ denotes the space of vector fields in $\theta_{p+d}$ with last $d$ components equal to 0.
The $G_e^\phi$-codimension of $f$ is defined as the codimension of $TG_e^\phi f$ as a subspace of $\theta(f)$, and denoted by $\phicod (f)$.

\medskip

Consider now two OPSUs, $F(x,\lambda) = (\bar{f}(x,\lambda),\lambda), F'(x,\lambda) = (\bar f'(\lambda,x),\lambda)$, not necessarily of the same map-germ.
Given the function $\phi\colon(\bbk^n\times\bbk,0)\to(\bbk,0)$ of the form $\phi(x,\lambda) = \lambda$, we can now ask if $\bar f$ and $\bar f'$ are $\phi$-equivalent.
In this case, the diffeomorphisms of $G^\phi_e$ are pairs $(h,k)$ with $h(x,\lambda) = (\bar{h}_\lambda(x),l(\lambda))$ and $k(X,\Lambda) = (k_\Lambda(X),l(\Lambda))$, being $l\colon(\bbk,0)\to(\bbk,0)$ a diffeomorphism, and $h_0(x), k_0(X)$ diffeomorphisms in $\bbk^n,\bbk^p$ respectively.

Notice that this case of $\phi$-equivalence generalizes equivalence as unfoldings after a pull-back: instead of using unfoldings of the identity, here one is allowed to use unfoldings of any given diffeomorphism.
In particular, this equivalence relation allows us to compare the different OPSUs of two $\A$-equivalent map-germs.

The following result can be found as Theorem 7.24 in \cite{dimcatopics} for the case that $\bbk = \bbc$, and it states that in the holomorphic case, $\R$-equivalence and $\mathscr K$ equivalence are equal in the orbits of quasi-homogeneous functions.
It is extended in \cite{takahashi_requiv} to the real case, giving up to two different equivalence classes depending on the sign of the function.

\begin{prop}[\cite{takahashi_requiv}]\label{prop_tak_requiv}
If $\bbk = \bbc$ and $g,g'\in\ofu_d$ are $\mathscr K$-equivalent and one of them is quasi-homogeneous, then they are $\R$-equivalent.
In the real case, either $g\sim_\R g'$ or $g\sim_\R -g'$.
\end{prop}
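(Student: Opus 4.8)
\emph{Proof sketch.} Both $\mathscr K$- and $\R$-equivalence are symmetric relations, so we may assume it is $g$ that is quasi-homogeneous, with (positive) weights $w_1,\dots,w_d$ and weighted degree $w>0$. Unwinding the action of $\K$ in dimension $p=1$ (Section~2), $g\sim_{\mathscr K}g'$ means $g'=u\cdot(g\circ\phi)$ for some $\phi\in\mathrm{Diff}_0(d)$ and some unit $u\in\ofu_d$; replacing $g'$ by the $\R$-equivalent germ $g'\circ\phi^{-1}=(u\circ\phi^{-1})\,g$, we may assume $g'=u\cdot g$. Thus the proposition follows once we show that \emph{multiplying a quasi-homogeneous germ by a unit does not change its $\R$-class over $\bbc$, and changes it by at most a sign over $\bbr$}.

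First I would remove the constant factor. Write $u=u(0)\,v$ with $v(0)=1$. Over $\bbc$, choosing $\lambda$ with $\lambda^{w}=u(0)$, the linear diffeomorphism $x\mapsto(\lambda^{w_1}x_1,\dots,\lambda^{w_d}x_d)$ carries $g$ to $\lambda^{w}g=u(0)g$, so $u(0)g\sim_\R g$. Over $\bbr$ the same works whenever $u(0)$ has a real $w$-th root; in the remaining case ($u(0)<0$, $w$ even) the identical scaling, with the real root of $|u(0)|$, applied to $-g$ gives $u(0)g\sim_\R-g$. This is exactly the origin of the sign in the real statement, and it leaves us with the task of showing $v\cdot g\sim_\R g$ for a unit $v$ with $v(0)=1$.

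For this last step I would use a Moser-type homotopy. Set $v_t=1+t(v-1)$, a unit for every $t\in[0,1]$ since $v_t(0)=1$, and $g_t=v_t\,g$, so $g_0=g$ and $g_1=v\,g$. By the Thom--Levine lemma (Lemma~2.6 in \cite{nunomond}) it suffices to produce, for each $t$, a vector field $\xi_t\in\mfr_d\theta_d$ depending smoothly on $t$ with
\[
\frac{\partial g_t}{\partial t}+dg_t(\xi_t)=0 .
\]
Here $\partial_t g_t=(v-1)\,g$ and $dg_t(\theta_d)$ is the Jacobian ideal $Jg_t$, so everything reduces to the membership $g\in Jg_t$ --- the only place quasi-homogeneity is used. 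Euler's identity gives $g=\tfrac1w\sum_i w_ix_i\,\partial_i g$, and substituting $\partial_i g=\tfrac1{v_t}\bigl(\partial_i g_t-(\partial_i v_t)g\bigr)$ yields $g=a_t+b_t\,g$ with $a_t\in Jg_t$ and $b_t\in\mfr_d$ (the coefficient $b_t$ being a combination of the terms $x_i\,\partial_i v_t$, hence $b_t(0)=0$); since $1-b_t$ is then a unit, $g=(1-b_t)^{-1}a_t\in Jg_t$. Feeding this back produces an explicit $\xi_t$ --- a unit multiple of the Euler field $\sum_i\tfrac{w_i}{w}x_i\,\partial_i$, hence in $\mfr_d\theta_d$ and smooth in $t$ --- and integrating it over $[0,1]$ gives $\psi_t\in\mathrm{Diff}_0(d)$ realizing $g_t\sim_\R g_0$; in particular $g_1=v\,g\sim_\R g$, completing the proof.

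The only genuinely delicate point is solving the homological equation with a $t$-smooth solution, i.e.\ the combination of $g\in Jg_t$ and the smooth dependence on $t$; both are supplied at once by the explicit Euler-type formula above. The reduction to $g'=u\cdot g$, the scaling argument, and the integration step are all routine, and the real case introduces nothing beyond the sign bookkeeping for $u(0)$ recorded in the second paragraph.
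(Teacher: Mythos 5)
The paper does not prove this statement: it quotes it, crediting Theorem~7.24 of \cite{dimcatopics} for $\bbk=\bbc$ and \cite{takahashi_requiv} for the sign refinement over $\bbr$. Your argument is a correct, self-contained proof and is essentially the standard one. The reduction from $g'=u\cdot(g\circ\phi)$ to $g'=u\cdot g$ (valid because $\K$-equivalence in the case $p=1$ amounts to equality of the principal ideals $\langle g'\rangle$ and $\langle g\circ\phi\rangle$, hence the generators differ by a unit), the weighted dilation that absorbs the constant $u(0)$ (which also correctly isolates the parity of the weighted degree $w$ as the sole source of the sign ambiguity over $\bbr$), and the Moser homotopy $v_t=1+t(v-1)$ combined with the Euler identity to absorb the residual unit $v$ with $v(0)=1$ are all sound; the key membership $g\in Jg_t$ is derived cleanly and with a $t$-smooth witness. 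Two cosmetic remarks that do not affect the argument: the scalar function multiplying the Euler field in your explicit $\xi_t$ is not a unit but an element of $\mfr_d$ (it carries the factor $v-1$), which is stronger and is precisely what guarantees $\xi_t\in\mfr_d\theta_d$; and the final recombination $ug=u(0)(vg)\sim_\R u(0)g\sim_\R\pm g$, which uses that $\R$-equivalence commutes with multiplication by a nonzero constant, is left implicit and would be worth writing out.
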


\begin{rem}
From now on we will be working over $\bbk = \bbc$ for simplicity, but all results about equivalence classes have a real counterpart up to a finite number.
\end{rem}

\begin{teo}\label{thm_phi_aug}
Let $F(x,\lambda) = (f_\lambda(x),\lambda)$ and $F'(x,\lambda) = (f'_\lambda(x),\lambda)$ be OPSUs of $f,f'\colon(\bbc^n,0)\to(\bbc^p,0)$ respectively.
If $f_\lambda,f'_\lambda$ are $\phi$-equivalent with $\phi(x,\lambda) = \lambda$, then for any $g\in\ofu_d$ that is $\R$-equivalent to a quasi-homogeneous function:
$$A_{F,g}(f) \sim_\A A_{F',g}(f')$$
\end{teo}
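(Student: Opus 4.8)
The plan is to unwind the $\phi$-equivalence of $\bar f,\bar f'$ into an explicit $\A$-equivalence $F'=k\circ F\circ h^{-1}$ realised by block-triangular diffeomorphisms, transport it to the augmentations at the cost of reparametrising the augmenting function to $l^{-1}\circ g$, and then absorb that reparametrisation using the quasi-homogeneity hypothesis. Since $\phi(x,\lambda)=\lambda$ we have $(\bar f,\phi)=F$ and $(\bar f',\phi)=F'$, so $\phi$-equivalence means precisely that there is $(h,k)\in G^\phi$ with $F'=k\circ F\circ h^{-1}$. By the description of $G^\phi$ recalled above, $h(x,\lambda)=(\bar h_\lambda(x),l(\lambda))$ and $k(X,\Lambda)=(\bar k_\Lambda(X),l(\Lambda))$ for a diffeomorphism $l$ of $(\bbc,0)$ and germs of diffeomorphisms $\bar h_0,\bar k_0$, so that $h^{-1}(X,\Lambda)=(\bar h_{l^{-1}(\Lambda)}^{-1}(X),l^{-1}(\Lambda))$. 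Comparing the first $p$ components of $F'=k\circ F\circ h^{-1}$ then yields the pointwise identity
$$f'_\lambda=\bar k_{l^{-1}(\lambda)}\circ f_{l^{-1}(\lambda)}\circ \bar h_{l^{-1}(\lambda)}^{-1}.$$

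Substituting $\lambda=g(z)$, the definition of the augmentation gives
$$A_{F',g}(f')(x,z)=\left(\bigl(\bar k_{l^{-1}(g(z))}\circ f_{l^{-1}(g(z))}\circ \bar h_{l^{-1}(g(z))}^{-1}\bigr)(x),\,z\right).$$
I would then introduce the germs $\Phi(x,z)=(\bar h_{l^{-1}(g(z))}^{-1}(x),z)$ and $\Psi(X,z)=(\bar k_{l^{-1}(g(z))}(X),z)$. Both are germs of diffeomorphisms at the origin, since their Jacobians are block triangular with invertible diagonal blocks (namely $D\bar h_0^{-1}(0)$ or $D\bar k_0(0)$, together with $\Id_d$), and a direct substitution shows $\Psi\circ A_{F,\,l^{-1}\circ g}(f)\circ\Phi=A_{F',g}(f')$. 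Hence $A_{F',g}(f')\sim_\A A_{F,\,l^{-1}\circ g}(f)$.

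It remains to prove $A_{F,\,l^{-1}\circ g}(f)\sim_\A A_{F,g}(f)$, which by Proposition \ref{prop_requiv_augm_aequiv} follows once we know $l^{-1}\circ g\sim_\R g$. This is the only delicate point, and it is exactly where the hypothesis on $g$ enters. Writing $l^{-1}(t)=t\,v(t)$ with $v(0)\neq0$, we get $l^{-1}\circ g=g\cdot(v\circ g)$ with $v\circ g$ a unit of $\ofu_d$; equivalently $(x,X)\mapsto(x,l^{-1}(X))$ is a $\mathscr K$-transformation, so $l^{-1}\circ g\sim_{\mathscr K}g$. Since by hypothesis $g$ is $\R$-equivalent, hence $\mathscr K$-equivalent, to a quasi-homogeneous $\hat g$, we obtain $l^{-1}\circ g\sim_{\mathscr K}\hat g$ with $\hat g$ quasi-homogeneous, so Proposition \ref{prop_tak_requiv} forces $l^{-1}\circ g\sim_\R\hat g\sim_\R g$ (up to a sign in the real case, as noted in the Remark above). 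Chaining $A_{F,g}(f)\sim_\A A_{F,\,l^{-1}\circ g}(f)\sim_\A A_{F',g}(f')$ completes the argument. The main obstacle is precisely controlling the reparametrisation $l$ of the unfolding parameter: without quasi-homogeneity, $l^{-1}\circ g$ is only guaranteed to be $\mathscr K$-equivalent to $g$, not $\R$-equivalent, and everything else is routine bookkeeping with the block-triangular diffeomorphisms.
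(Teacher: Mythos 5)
Your proof is correct and takes essentially the same route as the paper: both unwind the $\phi$-equivalence into the block-triangular diffeomorphisms $h=(\bar h,l)$, $k=(\bar k,l)$, transport them to the augmentations via the induced $(Ah,Ak)$, and absorb the resulting reparametrisation of the unfolding parameter through Proposition~\ref{prop_tak_requiv}. The only cosmetic difference is that you use $A_{F,\,l^{-1}\circ g}(f)$ as the intermediate germ where the paper uses $A_{F',\,l\circ g}(f')$, and you spell out the $\mathscr K$-equivalence step ($l^{-1}\circ g\sim_\K g$) that the paper leaves implicit before invoking the proposition.
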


\begin{proof}
By hypothesis, there exist germs of diffeomorphisms $h\in\theta_{n+1}$, $k\in\theta_{p+1}$, $l\in\theta_1$ such that the following diagram commutes:
\[
\xymatrix{
(\bbc^{n}\times\bbc,0) \ar[r]^(.5){F}\ar[d]^(.45)h & (\bbc^p\times\bbc,0)\ar[d]^(.45)k \ar[r]^(.6)\pi &(\bbc,0)\ar[d]^(.45)l \\
(\bbc^{n}\times\bbc,0) \ar[r]^(.5){F'} & (\bbc^p\times\bbc,0) \ar[r]^(.6)\pi &(\bbc,0)
}
\]
Using $(X,\Lambda)$ as variables for $\bbc^p\times\bbc$, we have:
$$\pi\circ k(X,\Lambda) = l\circ\pi(X,\Lambda) = l(\Lambda)$$
and so we can write $k = (\bar{k},l)$ for some $\bar{k}\colon(\bbc^p\times\bbc,0)\to(\bbc^p,0)$.
Similarly, if $\tilde{\pi}\colon(\bbc^n\times\bbc,0)\to(\bbc,0)$ is the natural projection, then:
\begin{equation*}
\begin{split}
\tilde{\pi}\circ h(x,\lambda) &= \pi \circ F'\circ h (x,\lambda) \\
& =  l\circ \pi\circ F (x,\lambda)\\
& = l(\lambda)
\end{split}
\end{equation*}
and so we can write $h = (\bar h, l)$ for some $\bar h\colon(\bbc^n\times\bbc,0)\to(\bbc^n,0)$.
Since $F'\circ h = k\circ F$, we have:
\begin{equation}\label{eq_def_opsus}
f'_{l(\lambda)}\circ \bar{h} (x,\lambda) = \bar{k}(f_\lambda(x),\lambda)
\end{equation}

Define $\tilde{F'}(x,\lambda) = (f'_{l(\lambda)}(x),\lambda)$, which is an OPSU of $f$.
First we see that $A_{F',g}(f)$ is $\A$-equivalent to $A_{\tilde{F'},g}(f)$.
Notice that $A_{\tilde{F'},g}(f) = A_{F',l\circ g}(f)$.
Since $g$ is $\R$-equivalent to a quasi-homogeneous function, we can apply Proposition \ref{prop_tak_requiv} to see that  $l\circ g$ is $\R$-equivalent to $g$.
So by Proposition \ref{prop_requiv_augm_aequiv} the augmentation $A_{\tilde F',g}(f) = A_{F',l\circ g}(f)$ is $\A$-equivalent to $A_{F',g}(f)$.

It only rests to see that $A_{\tilde{F'},g}(f)$ is $\A$-equivalent to $A_{F,g}(f)$.
Define $Ah(x,z) = (\bar{h}(x,g(z)),z) \in\theta_{n+d}$ and $Ak(X,z) = (\bar{k}(X,g(z)),z)\in\theta_{p+d}$, which are germs of diffeomorphism.
Then, using Equation \ref{eq_def_opsus}:
$$A_{\tilde{F'},g}(f)\circ Ah = Ak \circ A_{F,g}(f)$$

\end{proof}

Hence, the number of $\phi$-equivalence classes (with $\phi(x,\lambda) = \lambda$) of the different OPSUs of a given map-germ, determines the number of different augmentations that can be generated via a given augmenting function.
In particular, the choice of OPSU will not affect the $\A$-class of an augmentation if we can prove that all the OPSUs of a map-germ lie in the same $\phi$-orbit.

This is precisely the case when the base germ has $\A_e$-codimension 1, which was proven by Cooper, Mond and Wik Atique in \cite{coopermondwik} for Morse augmenting functions, and then extended in \cite{augcod1morse} to any quasi-homogeneous function with isolated singularity (see Proposition \ref{prop_opsu}).
Now we can drop the isolated singularity hypothesis, obtaining:

\begin{coro}\label{coro_opsu}
If $\aecod (f) = 1$ and $g$ is $\R$-equivalent to a quasi-homogeneous function, then the $\A$-class of the augmentation does not depend on the choice of OPSU.
\end{coro}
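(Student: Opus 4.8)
The strategy is to reduce the statement to Theorem \ref{thm_phi_aug}. Taking $f'=f$ there, that theorem already gives $A_{F,g}(f)\sim_\A A_{F',g}(f)$ as soon as $g$ is $\R$-equivalent to a quasi-homogeneous function and the families $f_\lambda,f'_\lambda$ of two OPSUs $F,F'$ of $f$ are $\phi$-equivalent with $\phi(x,\lambda)=\lambda$. So the whole content of the corollary is the claim that \emph{when $\aecod(f)=1$, any two OPSUs of $f$ are $\phi$-equivalent with $\phi(x,\lambda)=\lambda$}.

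To prove this, recall (as noted at the start of this section and already exploited in Proposition \ref{prop_opsu}) that when $\aecod(f)=1$ an OPSU is an $\A_e$-versal unfolding of $f$: a stable $1$-parameter unfolding cannot be $\A$-equivalent to the trivial unfolding $f\times\id_1$ — which is not stable, since $f$ is not — so its infinitesimal generator is nonzero in the one-dimensional space $\nae f$ and hence spans it. Thus both $F$ and $F'$ are $\A_e$-versal unfoldings of $f$ on $1=\aecod(f)$ parameters. By the uniqueness of minimal versal unfoldings, $F$ and $F'$ are isomorphic as unfoldings: there are a diffeomorphism $l$ of the parameter line $(\bbk,0)$ and unfoldings of the identity $h(x,\lambda)=(\bar h_\lambda(x),l(\lambda))$ in the source and $k(X,\Lambda)=(k_\Lambda(X),l(\Lambda))$ in the target, with $\bar h_0=\id_n$ and $k_0=\id_p$, making $F'\circ h=k\circ F$ and $\pi\circ k=l\circ\pi$ commute. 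Since $(\bar f,\phi)=F$ and $(\bar f',\phi)=F'$ for $\phi(x,\lambda)=\lambda$, this is exactly a $G^\phi$-equivalence between $\bar f$ and $\bar f'$. Feeding it into Theorem \ref{thm_phi_aug} (with $f'=f$) yields $A_{F,g}(f)\sim_\A A_{F',g}(f)$, as required.

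I do not expect a genuine obstacle here: the only care needed is to match the classical isomorphism of versal unfoldings with the precise definition of $\phi$-equivalence used in this paper — in particular that the last components of $h$ and $k$ are governed by a single diffeomorphism $l$ of the base, so that $\pi\circ k=l\circ\pi$ and $l\circ\phi\circ h^{-1}=\phi$ hold — and to observe that it is Theorem \ref{thm_phi_aug} (which internally invokes Proposition \ref{prop_tak_requiv}) that allows us to replace the ``quasi-homogeneous with isolated singularity'' hypothesis of Proposition \ref{prop_opsu} by the weaker ``$\R$-equivalent to a quasi-homogeneous function''. All the substantive work already sits in Theorem \ref{thm_phi_aug}.
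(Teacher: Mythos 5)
Your proof is correct and matches the paper's intended argument: when $\aecod(f)=1$ every OPSU is a miniversal unfolding (its infinitesimal generator must be nonzero in the one-dimensional $\nae f$, as the trivial unfolding cannot be stable), so uniqueness of miniversal unfoldings up to isomorphism and base-reparametrization gives precisely a $G^\phi$-equivalence, and Theorem \ref{thm_phi_aug} then yields the conclusion. The paper leaves this proof implicit — the preceding paragraphs spell out the same reduction to Theorem \ref{thm_phi_aug} via versality — so you have supplied the details it suppresses.
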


\begin{rem}
From this point on, we will only be concerned about $\phi$-equivalence in the case that $\phi(x,\lambda) = \lambda$, i. e., the projection over the last component.
We will omit this information, and just write $\phi$-equivalence.

We will say that two OPSUs $F(x,\lambda) = (f_\lambda(x),\lambda), F'(x,\lambda) = (f_\lambda'(x),\lambda)$ are $\phi$-equivalent if $f_\lambda, f_\lambda'$ are $\phi$-equivalent.
When we talk about the $\phi$-codimension of $F$, we are referring to $\phicod (f_\lambda)$.

\end{rem}

The aim in the following sections will be finding conditions to ensure that all OPSUs of a given map-germ lie on the same $\phi$-equivalence class.
But having a finite amount of classes is also useful: in particular, it helps to determine when an augmentation is simple.

\begin{teo}\label{thm_suf_simple}
Let $\mononp f$ be a smooth germ that admits an OPSU $F\colon(\bbk^n\times\bbk,0)\to(\bbk^p\times\bbk,0)$ and $g\colon(\bbk^d,0)\to(\bbk,0)$ a smooth function.
If:
\begin{enumerate}[noitemsep]
  \item $f$ is simple
  \item $g$ is simple
  \item There is a finite number of $\phi$-equivalence classes in which all OPSUs of $f$ lie.
  \item Every deformation of $A_{F,g}(f)$ is the augmentation of some deformation of $f$ via some deformation of $g$.
\end{enumerate}
Then, $A_{F,g}(f)$ is simple.
\end{teo}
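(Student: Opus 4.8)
The plan is to unwind the definition of $\A$-simplicity for $A_{F,g}(f)$: we must show that, in a representative of its versal unfolding, only finitely many $\A$-classes of (multi-)germs occur; equivalently, that the set of $\A$-classes of germs deforming $A_{F,g}(f)$ is finite. By hypothesis (4), every such deformation is $\A$-equivalent to an augmentation $A_{F',g'}(f')$, where $f'$ is a deformation of $f$, $F'$ an OPSU of $f'$, and $g'$ a deformation of $g$. So it suffices to show that, as $(f',F',g')$ varies over all such triples, the augmentation $A_{F',g'}(f')$ falls into only finitely many $\A$-classes.

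The first step is to reduce the data to something finite. Since $f$ is simple (hypothesis (1)), $f'$ is $\A$-equivalent to one of finitely many germs $f_1=f,f_2,\dots,f_m$, each of which is again simple; since $g$ is simple (hypothesis (2)), $g'$ is $\R$-equivalent to one of finitely many functions $g_1,\dots,g_\ell$, each quasi-homogeneous with isolated singularity (simple singularities are of type ADE, hence quasi-homogeneous, and adjacency preserves simplicity). Fixing an $\A$-equivalence $f'\sim_\A f_i$ and conjugating $F'$ by it, extended by the identity in the $\lambda$-direction, produces a stable unfolding $\tilde F$ of $f_i$, i.e.\ an OPSU of $f_i$, which is $\phi$-equivalent to $F'$ with $\phi$ the projection $(x,\lambda)\mapsto\lambda$. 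Because $g'$ is $\R$-equivalent to a quasi-homogeneous function, Theorem \ref{thm_phi_aug} gives $A_{F',g'}(f')\sim_\A A_{\tilde F,g'}(f_i)$, and Proposition \ref{prop_requiv_augm_aequiv} gives $A_{\tilde F,g'}(f_i)\sim_\A A_{\tilde F,g_j}(f_i)$ for the appropriate $j$. Finally, hypothesis (3), applied to $f_i$, says there are only finitely many $\phi$-classes of OPSUs of $f_i$, say represented by $F_{i,1},\dots,F_{i,k_i}$; one more application of Theorem \ref{thm_phi_aug} yields $A_{\tilde F,g_j}(f_i)\sim_\A A_{F_{i,t},g_j}(f_i)$ for some $t$. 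Hence the $\A$-class of the original deformation depends only on the finite datum $(i,t,j)$, and $A_{F,g}(f)$ is simple.

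The main obstacle is precisely the point just used: hypothesis (3) is stated for $f$ itself, whereas the reduction requires controlling the $\phi$-classes of OPSUs of each adjacent germ $f_i$ as well. I would resolve this either by reading (3) as holding for all germs occurring in a versal unfolding of $f$, or, preferably, by an induction on $\A_e$-codimension: the germs $f_i$ appearing in a versal unfolding of $f$ satisfy $\aecod(f_i)\le\aecod(f)$ (with equality only when $f_i\sim_\A f$), are simple, and their OPSU-finiteness is the inductive hypothesis, the base case being stable germs, which are their own OPSUs up to trivial suspension. A secondary technical point to settle is the passage between mono- and multi-germs: $A_{F,g}(f)$ is a mono-germ, but a deformation appearing in its versal unfolding can be a multi-germ, so hypothesis (4) must be applied branch by branch, each branch being an augmentation of the corresponding branch of a deformation of $f$; one then applies the finiteness above to each branch, using that there are only finitely many combinatorial types of such multi-germs.
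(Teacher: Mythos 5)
Your proof takes essentially the same approach as the paper: reduce via hypothesis (4) to augmentations of deformations, use simplicity of $f$ and $g$ to get finitely many classes of base germs and augmenting functions, and use hypothesis (3) together with Theorem \ref{thm_phi_aug} and Proposition \ref{prop_requiv_augm_aequiv} to reduce to finitely many OPSU classes. Your observation that hypothesis (3) is stated for $f$ but is in fact applied to each deformation $f_i$ is a genuine imprecision which the paper's own proof also commits silently (it asserts ``each of these deformations admits a finite number of OPSUs up to $\phi$-equivalence'' without justification); your proposed fix, either reinterpreting (3) or inducting on $\aecod$, would close that gap.
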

\begin{proof}
Since all singularities that appear when deforming $A_{F,g}(f)$ are augmentations of some deformation of $f$, we only need to compute all the possible ways to augment all deformations of $f$ to obtain deformations of $A_{F,g}(f)$.

Since $f$ is simple, there must be a finite number of $\mathcal{A}$-classes in which its deformations lie.
Each of these deformations admits a finite number of OPSUs up to $\phi$-equivalence.
There is only a finite number of $\R$-equivalence classes in the deformations of $g$, hence there is only a finite number of possible augmentations in which the deformations of $A_{F,g}(f)$ lie in.

\end{proof}

\begin{rem}\label{F4simple}
In the above theorem, condition (4) can be changed for
\begin{enumerate}
\item[(4')] Every deformation of $A_{F,g}(f)$ which is not the augmentation of some deformation of $f$ via some deformation of $g$ is simple.
\end{enumerate}
And the same proof yields that $A_{F,g}(f)$ is simple.
\end{rem}

\begin{ex}
The family $S_k\equiv (x,y^2,y^3+ x^ky)$ from \cite{mond2en3} is simple, since its versal unfolding is of the form $(x, y^2, y^3 + (x^k + \sum_{i=0}^{k-2} \lambda_ix^i)y, \lambda)$ and all the singularities that appear are augmentations of deformations of $(y^2,y^3)$ via deformations of $x^k$.
\end{ex}

Notice that as a direct corollary, we can also find some information about where the lack of modality might lie:

\begin{coro}\label{lackofsimplicity}
If $f$ and $g$ are simple, the number of $\phi$-equivalence classes of $f$ is finite, and $A_{F,g}(f)$ is not simple, then the lack of simplicity comes from singularities that are not augmentations or are augmentations of different map-germs.
\end{coro}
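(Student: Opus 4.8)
The plan is to obtain this statement as the logical contrapositive of Theorem~\ref{thm_suf_simple}, read in the form of Remark~\ref{F4simple}. The hypotheses ``$f$ simple'', ``$g$ simple'' and ``the number of $\phi$-equivalence classes of $f$ is finite'' are exactly conditions (1), (2) and (3) of Theorem~\ref{thm_suf_simple}. If, in addition, condition (4$'$) of Remark~\ref{F4simple} held --- that is, if every deformation of $A_{F,g}(f)$ which is not the augmentation of some deformation of $f$ via some deformation of $g$ were simple --- then Remark~\ref{F4simple} would give that $A_{F,g}(f)$ is simple, contradicting our assumption. Hence (4$'$) fails: the versal unfolding of $A_{F,g}(f)$ contains a map-germ $G$ which is (a) \emph{not} simple and (b) \emph{not} $\A$-equivalent to the augmentation of any deformation of $f$ via any deformation of $g$.

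The second step is a dichotomy on such a witness $G$. If $G$ is not an augmentation, it is already a singularity of the first allowed type. If $G$ is an augmentation, write $G\sim_\A A_{\tilde F,\tilde g}(\tilde f)$ with base $\tilde f$ and augmenting function $\tilde g$; by (b) the pair $(\tilde f,\tilde g)$ cannot consist of a deformation of $f$ together with a deformation of $g$, so the base $\tilde f$ is not $\A$-equivalent to any deformation of $f$ (equivalently, it lies outside the versal family of $f$), which is exactly what ``augmentation of a different map-germ'' records. Collecting the two cases yields the statement. I would also remark, for context, that this is precisely the situation of the $11_5$-type examples mentioned in the Introduction, where the modulus is carried by such exotic strata.

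Strictly speaking there is no genuine obstacle in proving the corollary itself: it is a formal consequence of Theorem~\ref{thm_suf_simple} and Remark~\ref{F4simple}. The real content sits upstream, in the proof of Theorem~\ref{thm_suf_simple}, namely the fact that \emph{all} augmentations of deformations of $f$ by deformations of $g$ are simple; this rests on Theorem~\ref{thm_phi_aug} (finitely many $\phi$-classes of OPSUs of a fixed germ give finitely many augmentations for a fixed augmenting function), on the finiteness of the $\A$-deformation classes of the simple germ $f$, and on the finiteness of the $\R$-deformation classes of the simple function $g$. The one point to be careful about is that this finiteness of $\phi$-classes must be available not just for $f$ but for each deformation of $f$; since $f$ is simple it has only finitely many deformation classes, so ``the number of $\phi$-equivalence classes of $f$ is finite'' should be understood (as in Theorem~\ref{thm_suf_simple}) over this whole finite list, and then the dichotomy above goes through verbatim.
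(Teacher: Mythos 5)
Your approach is the right one and matches the paper's intent: the paper offers no explicit proof and simply calls this a ``direct corollary,'' which is precisely the contrapositive of Theorem~\ref{thm_suf_simple} in the form of Remark~\ref{F4simple}. Under hypotheses (1)--(3), failure of simplicity forces failure of (4$'$), so the versal unfolding must contain a non-simple germ $G$ that is not an augmentation of any deformation of $f$ via any deformation of $g$; that is exactly what the corollary asserts, phrased informally.

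The one place where you over-claim is in the second branch of the dichotomy. From property (b) you know only that $G$ cannot be written as $A_{\tilde F,\tilde g}(\tilde f)$ with $\tilde f$ a deformation of $f$ \emph{and} $\tilde g$ a deformation of $g$; it does not follow that the base $\tilde f$ must lie outside the versal family of $f$. Negating a conjunction gives a disjunction: it is a priori possible that $\tilde f$ is a deformation of $f$ while the augmenting function $\tilde g$ is what fails to deform $g$. The honest conclusion from (b) is simply that $G$ is not an augmentation of a deformation of $f$ by a deformation of $g$, and the phrase ``augmentations of different map-germs'' should be read broadly to cover a change in the augmenting function as well as in the base. (The refinement in the Remark following the corollary, locating the modality in the strata $\mu_{i,j}\ne 0$ for $i,j>1$, is the precise form of this.) This is a minor imprecision in a step whose informal target is itself imprecise, not a gap in the argument.
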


\begin{rem}
Suppose $f,g$ are simple and $\nae f = \linsp{\bbk}{\pdat{f_\lambda}{\lambda}{0},\gamma_2,\ldots,\gamma_k}$, and $\frac{\ofu_d}{Jg + g} = \linsp{\bbk}{\tau_1,\ldots,\tau_r}$. We know from \cite{augcod1morse} that a versal unfolding of an augmentation is given by
$$\left(f_{g(z) + \sum_{i=1}^r\tau_i(z)\mu_{i,1}}(x)+\sum_{i=1}^r\sum_{j=2}^k \mu_{i,j}\tau_i(z)\gamma_j(x),\mu\right).$$
Write $\gamma_1 = \pdat{f_\lambda}{\lambda}{0}$.
We can assume $\tau_1 = 1$.
Then, the different parameters in the versal unfolding can be arranged as:
\[
\begin{matrix}
 \gamma_1 &  \gamma_2 & \cdots &  \gamma_k\\
\tau_2 \gamma_1 & \tau_2 \gamma_2 & \cdots & \tau_2 \gamma_k\\
\vdots & \vdots & \ddots & \vdots \\
\tau_r \gamma_1 & \tau_r \gamma_2 & \cdots & \tau_r \gamma_k
\end{matrix}
\]
Making all $\mu_{i,j} = 0$ for $i,j >1$, we see that in this strata we only obtain augmentations of deformations of $f$ and $g$.
If $f$ admits a finite number of $\phi$-equivalence classes for its OPSUs, then the number of singularities that appears here is finite, meaning that if the augmentation is non-simple the modality should be looked for in the strata $\mu_{i,j} \neq 0$ for $i,j > 1$.
\end{rem}

\begin{ex}

The map-germ $F(x,y,z) = \left(z^3 + (x^4+y^4)z,x,y\right)$ can be seen as the augmentation of the simple germ $f(x,z) = \left(z^3+x^4z,x\right)$ via the simple function $y^4$.
$\nae F$ is generated by:
\[
\begin{matrix}
z & xz & x^2z\\
zy & xyz & x^2yz\\
zy^2 & xy^2z & x^2y^2z\\
\end{matrix}
\]
From the versal unfolding of $F$, the deformations in the direction of the bottom right $2$ by $2$ minor are:
$$F_\mu(x,y,z) = \left(z^3 + (x^4+\sum_{\substack{i=1\\j=1}}^2\mu_{i,j} x^iy^j +y^4)z,x,y\right)$$
These deformations can only be obtained as augmentations of $z^3$ and not as augmentations of $f$.
Notice in particular that the stratum $\mu_{1,1}=\mu_{1,2} =\mu_{2,1} = 0$ is where the lack of simplicity is located.
Of course, seeing $F$ directly as the augmentation of $z^3$ via the non-simple function $x^4+y^4$ allows us to directly check that $F$ is non-simple (see \cite{augcod1morse}).

\end{ex}

\section{Weak equivalence and cross-substantial unfoldings }
In the next section we will see that a key property to determine when all the OPSUs of a given map-germ lie on the same $\phi$-equivalence class is substantiality.
Particularly, we will see that this will hold if all its OPSUs are substantial.
To ensure this condition, along this section we will be working with a new, much coarser equivalence relation which preserves substantiality.
In most cases, all OPSUs will lie on the same orbit in this {\it weak} equivalence.

First, we need to construct a ``normal" form for the OPSUs of a given map-germ.
Let $\mononp f$ be a smooth map-germ that admits an OPSU.
In particular, there must exist a $\gamma\in\theta(f)$ such that $(f(x) + \lambda\gamma(x),\lambda)$ is stable.
Denote $k = \aecod(f)$.
Then:

\begin{prop}\label{propo_normal_opsu}
Every OPSU of $f$ is $\phi$-equivalent to one of the form:
\begin{equation} \label{eq_normal_opsu}
\left( f(x) + \left(1 + \sum^{k-1}_{i=1}q_i(\lambda)s_i(f(x))\right)\lambda\gamma(x),\lambda \right)
\end{equation}
for some $q_i\in\ofu_1,s_i\in\mfr_p$.
\end{prop}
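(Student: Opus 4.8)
The strategy is to start with an arbitrary OPSU $F'(x,\lambda) = (f'_\lambda(x),\lambda)$ of $f$ and show, step by step, that we can bring it via $\phi$-equivalence to the normal form \eqref{eq_normal_opsu}. Recall that $\phi$-equivalence with $\phi(x,\lambda)=\lambda$ allows diffeomorphisms $h(x,\lambda) = (\bar h_\lambda(x), l(\lambda))$ in the source and $k(X,\Lambda) = (\bar k_\Lambda(X), l(\Lambda))$ in the target, so in particular it contains the changes of coordinates that fix each $\lambda$-slice and act by $\mathscr A$-equivalence there, plus reparametrizations of $\lambda$. The key structural input is that $(f(x) + \lambda\gamma(x),\lambda)$ is a stable map-germ and that, since $\aecod(f) = k$, the class $[\gamma]$ generates a one-dimensional piece of $\nae f$ in the sense that $f_0' = f$ forces $\pdat{f_\lambda'}{\lambda}{0}$ to be $\mathscr A_e$-equivalent to a nonzero multiple of $\gamma$ modulo $\tae f$ (this is the usual fact that all OPSUs of $f$ have the same $1$-jet in $\lambda$ up to the tangent space, because the stable unfolding is ``versal in the $\lambda$-direction'' among $1$-parameter unfoldings — or more precisely, $\gamma$ spans the relevant direction).

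**Key steps.** First I would show that after a $\lambda$-slice-preserving $\mathscr A$-equivalence and a rescaling of $\lambda$, any OPSU $F'$ can be written as $(f(x) + \lambda\,u(x,\lambda)\,\gamma(x),\lambda)$ where $u(x,0)$ is a unit; the point here is that $\pdat{f_\lambda'}{\lambda}{0}$ equals $\gamma$ up to a trivialization in $\tae f$ (absorbed by the diffeomorphisms) and up to a unit (absorbed by reparametrizing $\lambda$), while higher-order terms in $\lambda$ combine into the factor $u(x,\lambda)$ because $(f+\lambda\gamma,\lambda)$ being stable means its $\mathscr K$-normal space is spanned over $\ofu_{n+1}$ by $\gamma$ together with constant vectors, so any vector field along $f'$ is, modulo the tangent space to the unfolding, a multiple of $\gamma$. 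Second, I would use Hadamard's lemma / the preparation theorem to rewrite $u(x,\lambda) = 1 + \lambda v(x,\lambda)$ after rescaling so the constant-in-$\lambda$ part is $1$, and then expand $v(x,\lambda)\gamma(x)$ in terms of a basis: since $F = (f+\lambda\gamma,\lambda)$ is stable, the module $\theta(F)/T\mathscr K_e F$ is finite-dimensional, generated as an $\ofu_{p+1}$-module pulled back along $F$, so any $v(x,\lambda)\gamma(x)$ can be rewritten modulo the tangent space as $\sum q_i(\lambda) s_i(f(x))\gamma(x)$ with $s_i \in \mfr_p$ — the $\mfr_p$ (rather than $\ofu_p$) comes from the fact that the ``constant'' and ``degree-zero'' parts have already been normalized into the leading $1$, and the count of $k-1$ generators matches $\aecod(f) = k$ minus the one slot used by $\gamma$ itself. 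Third, I would check that each such rewriting is realized by an element of $G^\phi_e$ — i.e. that the infinitesimal changes used are integrable to honest $\phi$-equivalence diffeomorphisms of the required triangular form — invoking the Thom–Levine-type argument as in Remark \ref{rem_subs}.

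**Main obstacle.** The hard part will be the bookkeeping in the second and third steps: making precise that ``every vector field along $f$ appearing as a perturbation direction is, modulo $\tae f$ and the liftable relations from stability of $F$, an $\ofu_1$-combination of $s_i(f)\gamma$ with $s_i\in\mfr_p$ and exactly $k-1$ terms''. This requires carefully tracking how the stability of $(f+\lambda\gamma,\lambda)$ controls $\theta(f)$ as a module and identifying the $s_i(f(x))$ as the pullbacks of a basis of $\mfr_p/(\text{something})$ of the right dimension; one must also ensure no $\lambda$-dependence is lost and that the reparametrization $l(\lambda)$ and the family $\bar k_\Lambda$ can be chosen compatibly (i.e. that the formal normalization actually comes from a genuine diffeomorphism, not merely a formal power series manipulation), which is where an application of the finite determinacy of stable germs or the preparation theorem will be needed. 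I expect the write-up to hinge on a clean statement of ``$\theta(F)/T\mathscr A_e F$ has the $s_i(f)\gamma$ as representatives'' and then transporting this back to unfoldings of $f$.
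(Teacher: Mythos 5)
Your outline is directionally similar to the paper's argument but it stops short of the proof at precisely the point you flag as the ``main obstacle,'' which is in fact the central technical input. The paper's proof rests on a structural result from \cite{notaopsus}: when $f$ admits an OPSU, one can choose a $\bbk$-basis of $\nae f$ of the very specific form
$$\nae f = \operatorname{Sp}_\bbk\bigl\lbrace \gamma(x),\, s_1(f(x))\gamma(x),\, \ldots,\, s_{k-1}(f(x))\gamma(x)\bigr\rbrace, \qquad s_i\in\mfr_p,$$
i.e.\ every basis element is $\gamma$ multiplied by a pullback $s_i\circ f$. Once you have this, the standard fact that any $1$-parameter unfolding of $f$ is equivalent as an unfolding to one obtained by specializing the versal parameters along a curve immediately gives the intermediate form $\bigl(f(x) + (\rho_0(\lambda) + \sum_i \rho_i(\lambda)s_i(f(x)))\gamma(x),\lambda\bigr)$ with $\rho_i\in\mfr_1$. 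Stability forces $\rho_0'(0)\neq 0$, and the explicit, elementary $\phi$-diffeomorphisms $h(x,\lambda)=(x,\rho_0^{-1}(\lambda))$, $k(X,\Lambda)=(X,\rho_0(\Lambda))$ (no Thom--Levine integration needed) bring this to the desired normal form, with the final step being a Hadamard factorization of $\tilde q_i = \rho_i\circ\rho_0^{-1}$.

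Your steps 1--2 attempt to reach the normal form through the $\K_e$-module structure of $\theta(f)$: you write an arbitrary OPSU as $(f(x)+\lambda u(x,\lambda)\gamma(x),\lambda)$ and then want to rewrite $u(x,\lambda)\gamma(x)$ modulo tangent space as $\sum q_i(\lambda)s_i(f(x))\gamma(x)$ with $s_i\in\mfr_p$. But the passage from a factor $u(x,\lambda)$ depending on $x$ to a $\bbk$-linear (or $\ofu_1$-linear) combination of multiples by \emph{pullbacks of target functions} $s_i\circ f$ is exactly the nontrivial content of the cited basis result for $\nae f$ -- it is not a routine preparation-theorem computation, and your proposal neither proves it nor cites it. Asserting that it ``follows because $\aecod(f)=k$'' is not sufficient: the dimension count tells you there are $k$ basis classes but not that they admit representatives of the form $s_i(f)\gamma$. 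This is a genuine gap. Additionally, step 3 (Thom--Levine integration) is superfluous: the reparametrization in $\lambda$ required here is explicit, so there is nothing to integrate. Finally, the parenthetical in your step 1 asserting that $\partial f'_\lambda/\partial\lambda|_0$ is a nonzero multiple of $\gamma$ modulo $\tae f$ is imprecise -- what matters is generation of $\nke f$ over $\ofu_p$, not the image in $\nae f$ -- though this is a secondary issue.
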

\begin{proof}
In \cite{notaopsus}, it is seen that when a map-germ admits an OPSU, one can find a $\bbk$-basis for $\nae f$ of the form:
$$\nae f =\operatorname{Sp}_\bbk\left\lbrace\gamma(x),s_1(f(x))\gamma_(x),\ldots,s_{k-1}(f(x))\gamma(x)\right\rbrace$$
for some polynomials $s_1,\ldots,s_{k-1}\in\mfr_p$.
In particular, this implies that every 1-parameter unfolding is equivalent as an unfolding (and, in particular, $\phi$-equivalent) to one of the form:
\begin{equation}\label{eq_unfeq}
\left( f(x) + \left(\rho_0(\lambda) + \sum^{k-1}_{i=1}\rho_i(\lambda)s_i(f(x))\right)\gamma(x),\lambda \right)
\end{equation}
for some $\rho_0,\ldots,\rho_{k-1}\in\mfr_1$.

Notice that for one of these unfoldings to be stable, it is mandatory that $\rho_0^\prime(0) \neq 0$.
In this case, $\rho_0$ must admit an inverse function germ $\rho_0^{-1}\colon(\bbk,0)\to(\bbk,0)$.
For each $i=0,\ldots,k-1$, define $\tilde q_i\colon(\bbk,0)\to(\bbk,0)$ as $\tilde q_i(\lambda) = \rho_i\circ \rho_0^{-1}(\lambda)$.
Consider the diffeomorphisms given by $h(x) = (x,\rho_0^{-1}(\lambda))$ and $k(X,\Lambda) = (X,\rho_0(\Lambda))$ in $\bbk^{n+1}$ and $\bbk^{p+1}$ respectively.
Notice that $(h,k)\in G_e^\phi$.
Composing with $h$ and $k$ in the source and target respectively, we see that the expression in Equation \ref{eq_unfeq} is $\phi$-equivalent to:
$$\left( f(x) + \left(\lambda + \sum^{k-1}_{i=1}\tilde q_i(\lambda)s_i(f(x))\right)\gamma(x),\lambda \right)$$
Using the fact that $\tilde q_i(0) = 0$, we apply Hadamard's Lemma to see that there exist some ${q_i}\in\ofu_1$ such that $\tilde q_i(\lambda) = \lambda q_i(\lambda)$ for each $i=1,\ldots,k-1$.
Factoring $\lambda$ out, we obtain a expression in the desired form.
\end{proof}

By Theorem \ref{thm_phi_aug}, any augmentation of $f$ will be $\A$-equivalent to the augmentation via an unfolding of the form in Equation \ref{eq_normal_opsu}.
All the OPSUs we consider from now on will be of this form.

Denote by $(x,\lambda)$ the variables for $\bbk^n\times\bbk$, we write $\mfr_\lambda$ the ideal of functions in $\ofu_{n+1}$ multiplied by $\lambda$.
Similarly, denote the variables in $\bbk^p\times\bbk$ as $(X,\Lambda)$, and $\mfr_\Lambda$ analogously to $\mfr_\lambda$.

\begin{definition}
We say that two map-germs $F,G\colon(\bbk^n\times\bbk,0)\to(\bbk^p\times\bbk,0)$ are {\it weak equivalent} if they are $\A$-equivalent via some germs of diffeomorphism $\Phi = (\Phi_1,\ldots,\Phi_{n+1})\in\text{Diff}_0(n+1)$ and $\Psi = (\Psi_1,\ldots,\Psi_{p+1})\in\text{Diff}_0(p+1)$ such that $\Phi_{n+1}\in\m_\lambda\ofu_{n+1}$ and $\Psi_{p+1}\in\m_\Lambda\ofu_{p+1}$.

These diffeomorphisms, whose last component is just the last variable times a unit in the ring of functions, will be called {\it weak} diffeomorphisms.
In particular, if a diffeomorphism is weak, its inverse will also be weak.

\end{definition}

The normal form obtained in Proposition \ref{propo_normal_opsu} does not give any particular hint on what the $s_i$ might be, but as we will see in the next result, having some information on them would be of great use.
Since it is a representative of a non-zero class over the $\K_e$-normal space to $f(x)$, we can consider $\gamma(x)\in\theta(f)$ as a vector field whose components are all zero except for one.
Moreover, we can assume that this component consists of a single monomial.
Without loss of generality, we will assume that this non-zero component is the last one, that is: $\gamma(x) = (0,\ldots,0,x^\alpha)$, with $x^\alpha$ a monomial in $\mfr_n$.

\begin{prop}\label{propo_weak_all_weak}
Assume that we can choose the $s_i\in\mfr_p$ in equation \ref{eq_normal_opsu} so that they do not depend on the last component (i.e., they only depend on $f_1,\ldots,f_{p-1}$).
Then, all OPSUs of $f$ are weak equivalent.
\end{prop}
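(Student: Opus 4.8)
The plan is to reduce, by Proposition \ref{propo_normal_opsu}, to comparing every OPSU written in the normal form \ref{eq_normal_opsu} with the trivial OPSU $F_0(x,\lambda)=(f(x)+\lambda\gamma(x),\lambda)$. This reduction is legitimate because $\phi$-equivalence implies weak equivalence: a $\phi$-equivalence between OPSUs is realised by diffeomorphisms $h(x,\lambda)=(\bar h_\lambda(x),l(\lambda))$ and $k(X,\Lambda)=(k_\Lambda(X),l(\Lambda))$ with $l\in\text{Diff}_0(1)$, whose last components are $l(\lambda)\in\m_\lambda\ofu_{n+1}$ and $l(\Lambda)\in\m_\Lambda\ofu_{p+1}$, i.e.\ they are weak diffeomorphisms. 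Since weak equivalence is an equivalence relation (compositions and inverses of weak diffeomorphisms are weak), it follows that all OPSUs of $f$ lie in a single weak-equivalence class once $F_0$ is seen to be weak equivalent to each normal-form OPSU.

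For the core step, take a normal-form OPSU
\[
F(x,\lambda)=\Big(f(x)+\big(1+\textstyle\sum_{i=1}^{k-1}q_i(\lambda)s_i(f(x))\big)\lambda\gamma(x),\ \lambda\Big),
\]
and recall $\gamma(x)=(0,\dots,0,x^\alpha)$ with $x^\alpha\in\m_n$. By hypothesis the $s_i$ depend only on the first $p-1$ target coordinates, so the unit appearing in $F$ is the composite with $f$ of the unit
\[
U(X_1,\dots,X_{p-1},\Lambda)=1+\textstyle\sum_{i=1}^{k-1}q_i(\Lambda)s_i(X_1,\dots,X_{p-1})\in\ofu_{p+1},
\]
which does not involve $X_p$ and satisfies $U(0)=1$. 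I would then define the candidate weak diffeomorphisms
\[
\Phi(x,\lambda)=\big(x,\ \lambda\,U(f_1(x),\dots,f_{p-1}(x),\lambda)\big),\qquad
\Psi(X_1,\dots,X_p,\Lambda)=\big(X_1,\dots,X_p,\ \Lambda\,U(X_1,\dots,X_{p-1},\Lambda)\big).
\]
A short check shows both have identity Jacobian at the origin (in the last row the off-diagonal entries carry a factor $\lambda$, resp.\ $\Lambda$, and the final diagonal entry is $U(0)=1$), so they are genuine diffeomorphism germs; and their last components lie in $\m_\lambda\ofu_{n+1}$, resp.\ $\m_\Lambda\ofu_{p+1}$, so both are weak.

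It then remains to verify the conjugacy $\Psi\circ F=F_0\circ\Phi$, which is a direct substitution. Writing $\bar f=(f_1(x),\dots,f_{p-1}(x))$ and using $F_0(x,\mu)=(\bar f,\ f_p(x)+\mu x^\alpha,\ \mu)$ with $\mu=\lambda U(\bar f,\lambda)$, the right-hand side equals $\big(\bar f,\ f_p(x)+\lambda U(\bar f,\lambda)x^\alpha,\ \lambda U(\bar f,\lambda)\big)$; the left-hand side is the same, because the first $p-1$ coordinates of $F(x,\lambda)$ are exactly $\bar f$, so $\Psi$ multiplies the last coordinate $\lambda$ of $F(x,\lambda)$ by $U(\bar f,\lambda)$, while the $p$-th coordinate of $\Psi\circ F$ is $f_p(x)+U(\bar f,\lambda)\lambda x^\alpha$. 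Hence $F$ and $F_0$ are weak equivalent via $\Phi$ and $\Psi$, and by the first paragraph all OPSUs of $f$ are weak equivalent.

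The computations are routine; the one point that genuinely uses the hypothesis — and hence the main thing to get right conceptually — is that the $p$-th coordinate of $F(x,\lambda)$ is $f_p(x)$ \emph{plus a correction term}, not $f_p(x)$ itself, so a unit depending on $X_p$ could not be recovered on the target from the values of $F$. Choosing the $s_i$ independent of $X_p$ is exactly what makes $U$ a function on the target that $F$ pulls back to the unit we need to absorb, which in turn makes $\Psi$ a target diffeomorphism compatible with the source diffeomorphism $\Phi$.
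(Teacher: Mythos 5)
Your proof is correct, and its core step is genuinely simpler than the paper's. The paper's argument constructs the source change implicitly: it introduces $H(Y,\Lambda,\lambda)=\Lambda-\big(1+\sum_i q_i(\lambda)Y_i\big)\lambda$, invokes the Implicit Function Theorem to produce a germ $\lambda = g(Y,\Lambda)$ solving $H=0$, verifies $g(Y,0)=0$ so that $g=\Lambda\tilde g$ by Hadamard, packages this as $\Phi(x,\Lambda)=(x,g(S(f(x)),\Lambda))$, computes $F\circ\Phi$, and then corrects the resulting last coordinate by inverting a target germ $\psi(\tilde X,\Lambda)=(\tilde X,\Lambda\tilde g(S(\tilde X),\Lambda))$. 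Your $\Phi(x,\lambda)=(x,\lambda\,U(\bar f(x),\lambda))$ is precisely the inverse of the paper's source change, written in closed form: because the unit $U$ can be read off directly from the normal form, you dispense with the IFT and with the two-step $g/\tilde g$ factorization, and you verify the conjugacy $\Psi\circ F = F_0\circ\Phi$ by a one-line substitution. Both proofs use the hypothesis in the same essential way — the independence of the $s_i$ from $X_p$ is what lets the unit descend to a function of the target variables $(X_1,\ldots,X_{p-1},\Lambda)$, so that the correcting target map is actually a diffeomorphism of $(\bbk^{p+1},0)$ rather than something that only makes sense pulled back to the source — but your version makes this mechanism explicit and the computation transparent. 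The only cosmetic discrepancy is that the paper expresses the equivalence as $\Psi\circ F\circ\Phi = F_0$, with $\Phi$ the inverse of yours; since weak diffeomorphisms are closed under composition and inversion, this is immaterial. Your preliminary paragraph (that $\phi$-equivalence implies weak equivalence and thus Proposition \ref{propo_normal_opsu} reduces the problem to normal forms) is also correct and matches the implicit reduction the paper makes when it says ``Notice that $\phi$-equivalence entails weak equivalence.''
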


\begin{proof}
Notice that $\phi$-equivalence entails weak equivalence.
Using our hypothesis and Proposition \ref{propo_normal_opsu}, we can choose some $s_i(X) = s_i(X_1,\ldots,X_p)\in\mfr_p$ not depending on $X_p$ so that every OPSU of $f$ will be weak equivalent to one of the form:
$$F(x,\lambda) = \left( f_1(x),\ldots,f_{p-1}(x), f_p(x) + \left(1 + \sum^{k-1}_{i=1}q_i(\lambda)s_i(f(x))\right)\lambda x^\alpha,\lambda \right)$$
Now, consider the function $H\colon(\bbk^{k-1}\times\bbk \times \bbk,0)\to(\bbk,0)$:
$$H(Y,\Lambda,\lambda) =\Lambda - \left(1 + \sum^{k-1}_{i=1}q_i(\lambda)Y_i\right)\lambda$$
Then, $\dpar{H}{\lambda}(0) \neq 0$ and so by the Implicit Function Theorem there exists a smooth function $g\colon (\bbk^{k-1}\times\bbk,0) \to (\bbk,0)$ such that $H(Y,\Lambda,g(Y,\Lambda))=  0$ and:
$$\dpar{g}{\Lambda}(Y,\Lambda) = \frac{1}{\dpar{H}{\lambda}(Y,\Lambda,g(Y,\Lambda))}\dpar{H}{\Lambda}(Y,\Lambda,g(Y,\Lambda))$$
In particular, $\dpar{g}{\Lambda}(0) \neq 0$. Moreover, since $H(Y,0,g(Y,0))=  0$:
$$0 = \left(1 + \sum^{k-1}_{i=1}q_i(g(Y,0))Y_i\right)g(Y,0)$$
For this to be true in an open neighbourhood of 0, $g(Y,0)$ must be 0 in a small enough neighbourhood. Hence, the germ of $g$ can be expressed as $g(Y,\Lambda) = \Lambda\tilde{g}(Y,\Lambda)$ for some $\tilde{g}\in\ofu_{k}$ with $\tilde{g}(0) \neq 0$.

Define $S(X) = (s_1(X),\ldots,s_{k-1}(X))$ and $\Phi\colon(\bbk^n\times\bbk,0)\to(\bbk^n\times\bbk,0)$ as $\Phi(x,\Lambda) = (x,g(S(f(x)),\Lambda))$, which is a weak diffeomorphism. Then:
$$F\circ\Phi(x,\Lambda) = \left(f_1(x),\ldots,f_{p-1}(x), f_p(x) + \Lambda x^\alpha,\Lambda\tilde g(S(f(x)),\Lambda) \right)$$
Write $\tilde X = (X_1,\ldots,X_{p-1})$. Now $S$ only depends on $\tilde{X}$, hence the map-germ $\psi\colon(\bbk^{p-1}\times\bbk,0)\to (\bbk^{p-1}\times\bbk,0)$ given by:
$$\psi(\tilde X,\Lambda) = \left(\tilde{X},\Lambda\tilde{g}(S(\tilde X),\Lambda)\right)$$
admits an inverse $\psi^{-1}\colon(\bbk^{p-1}\times\bbk,0)\to (\bbk^{p-1}\times\bbk,0)$ of the form:
$$\psi^{-1}(\tilde X,\Lambda) = \left(\psi_1^{-1}(\tilde{X},\Lambda), \Lambda\psi_2^{-1}(\tilde X,\Lambda) \right)$$
Write $\Psi(X,\Lambda) = \left(\psi_1^{-1}(\tilde{X},\Lambda),X_p,\Lambda\psi_2^{-1}(\tilde{X},\Lambda)\right)$, which is a weak diffeomorphism, and we have:
$$\Psi\circ F\circ \Phi = (f_1(x),\ldots,f_{p-1}(x),f_p(x) +\Lambda x^\alpha,\Lambda)$$
\end{proof}

\begin{ex}
Consider the curve $f\colon(\bbk,0)\to(\bbk^2,0)$ given by $f(y) = (y^2,y^5)$.
It is easy to check that $N(f)$ is $\bbk$-generated by the vector field $(0,y)$, and that $\nae f = \operatorname{Sp}_\bbk\left\lbrace (0,y),(0,y^3)\right\rbrace$.
Observe that $s(X_1,X_2) = X_1$ gives us $(0,y^3) = s(f(y))(0,y)$.
Hence, by Proposition \ref{propo_weak_all_weak} all OPSUs of $f$ are weak equivalent.
\end{ex}

\begin{ex}
The map-germ $11_5$ in \cite{rieger2to2} admits the normal form:
$$g(x,y) = (x,y^4+x^2y+xy^2)$$
In this case, $N(g) = \operatorname{Sp}_\bbk\left\lbrace (0,y) \right\rbrace$ and $\nae g = \operatorname{Sp}_\bbk\left\lbrace (0,y),(0,xy) \right\rbrace$
Once again, Proposition \ref{propo_weak_all_weak} ensures all OPSUs of $g$ are weak equivalent.
\end{ex}



Weak equivalence behaves well with substantiality (the case $\delta(F) = 1$ of Definition \ref{def_degsus}) in the following sense:

\begin{prop}\label{propo_weak_subs}
If two OPSUs are weak-equivalent, and one of them is substantial, the other will also be substantial.
\end{prop}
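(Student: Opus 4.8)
The plan is to recast substantiality in terms of the isotropy group, where it has the concrete shape described in Remark \ref{rem_subs}. Assume, without loss of generality, that $F$ is substantial and that $G\circ\Phi=\Psi\circ F$ for weak diffeomorphisms $\Phi\in\text{Diff}_0(n+1)$, $\Psi\in\text{Diff}_0(p+1)$; since the inverse of a weak diffeomorphism is again weak, treating this case covers both directions of the statement. As $\Psi$ is weak and invertible, its last component can be written $\Psi_{p+1}(X,\Lambda)=\Lambda\,u(X,\Lambda)$ with $u$ a unit, and likewise the last component of $\Psi^{-1}$ is $\Lambda\,\tilde u(X,\Lambda)$ with $\tilde u$ a unit.

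First I would apply Remark \ref{rem_subs}: substantiality of $F$ produces a one-parameter subgroup $\{(\phi_t,\psi_t)\}_t$ of the isotropy group $\A_F$ with the rigid last component $\psi_t^{p+1}(X,\Lambda)=\Lambda e^{t}$. Using the conjugation identity $\A_G=(\Phi,\Psi)\cdot\A_F\cdot(\Phi^{-1},\Psi^{-1})$ recalled in that same remark (or the one-line check that $\tilde\psi_t\circ G=G\circ\tilde\phi_t$), the conjugated family $(\tilde\phi_t,\tilde\psi_t):=(\Phi\circ\phi_t\circ\Phi^{-1},\ \Psi\circ\psi_t\circ\Psi^{-1})$ is a one-parameter subgroup of $\A_G$.

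Next I would compute the last component of $\tilde\psi_t$. Writing $Y=(X,\Lambda)$ and substituting $\psi_t^{p+1}=\Lambda e^t$ together with the two unit expansions into $\tilde\psi_t=\Psi\circ\psi_t\circ\Psi^{-1}$ gives
\[
\tilde\psi_t^{\,p+1}(Y)=\Lambda\cdot\tilde u(Y)\,e^{t}\,u\bigl(\psi_t(\Psi^{-1}(Y))\bigr),
\]
which is $\Lambda$ times a unit for every small $t$. Differentiating at $t=0$ and using $\psi_t(0)=0$, the infinitesimal generator $\tilde\eta:=\partial_t\tilde\psi_t|_{t=0}$ satisfies $\tilde\eta^{\,p+1}(Y)=\Lambda\cdot W(Y)$ with $W(0)=\tilde u(0)\,u(0)\neq 0$, so $W$ is a unit. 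Moreover, since $(\tilde\phi_t,\tilde\psi_t)\in\A_G$, differentiating $\tilde\psi_t\circ G=G\circ\tilde\phi_t$ at $t=0$ yields $\omega G(\tilde\eta)=tG(\tilde\xi)$ with $\tilde\xi:=\partial_t\tilde\phi_t|_{t=0}\in\theta_{n+1}$, so $\tilde\eta\in\Lift(G)$. Hence $\Lambda=W^{-1}\tilde\eta^{\,p+1}\in d\pi(\Lift(G))$, i.e. $\delta(G)=1$, and $G$ is substantial.

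The step I expect to be the main obstacle is choosing what to transport. The naive move — take a single liftable field $\eta$ of $F$ with $\eta^{p+1}=\Lambda$ and push it forward by $\Psi$ — seems to stall: the last component of $\Psi_*\eta$ comes out as $\Lambda\bigl(u+\Lambda\,\partial_\Lambda u+\sum_j(\partial_{X_j}u)\,\eta^j\bigr)$, and the value of the parenthesis at the origin is not controllable, since the components $\eta^j$ of a liftable vector field evaluate into $\tilde\tau(F)$ and need not vanish at $0$. Transporting the whole one-parameter flow rather than a single vector field is what removes the difficulty: the rigidity $\psi_t^{p+1}=\Lambda e^{t}$, together with $\psi_t(0)=0$, forces precisely those cross terms to disappear after differentiating at $t=0$, leaving a genuine unit.
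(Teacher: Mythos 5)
Your proof is correct and takes essentially the same route as the paper: conjugate the Thom--Levine flow from Remark \ref{rem_subs} by the weak diffeomorphisms to obtain a one-parameter subgroup of the isotropy group of the second unfolding, and then show that the last component of the infinitesimal generator equals $\Lambda$ times a unit, using that the module of liftable vector fields is an $\ofu_{p+1}$-module. The only stylistic difference is the order of operations (you track the last component of the conjugated flow as an explicit function of $t$ and then differentiate, whereas the paper differentiates first and reads the last component off the chain-rule expansion), and you usefully make explicit the role of $\psi_t(0)=0$ in killing the cross terms at the origin, which the paper's displayed formula leaves implicit.
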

\begin{proof}
From Remark \ref{rem_subs}, a substantial OPSU $F$ admits families of diffeomorphisms $(\Phi_t,\Psi_t)\in\A_F$ with $\Phi_t(x,\lambda) = (\Phi_t^{1}(x,\lambda),\ldots,\Phi_t^{n}(x,\lambda),\lambda e^{t})$
, and $\Psi(X,\Lambda) = (\Psi_1^t(X,\Lambda),\ldots,\Psi_p^t(X,\Lambda),\Lambda e^t)$
such that for each $t\in\bbk$ small enough $(\Phi_t,\Psi_t)\in\A_F$.

If $\tilde F$ is another OPSU and $\tilde\Phi \in\theta_{n+1},\tilde\Psi\in\theta_{p+1}$ are weak diffeomorphisms such that $\tilde F  \circ\tilde \Phi =  \tilde \Psi \circ   F$, then for each $t\in\bbk$ small enough:
 $$(\tilde\Phi\circ\Phi_t\circ\tilde\Phi^{-1}, \tilde\Psi\circ\Psi_t\circ\tilde\Psi^{-1}) \in \A_{\tilde F}$$
This implies, by the Thom-Levine lemma, that the vector field $\eta = \left. \dpar{}{t}\left(\tilde\Psi\circ\Psi_t\circ\tilde\Psi^{-1}\right)\right|_{t=0}$ is liftable.
Applying the chain rule, this is equal to:
$$\eta = d\tilde\Psi\circ \pdat{\Psi_t}{t}{0}\circ\tilde\Psi^{-1}$$

Since $\tilde\Psi$ is weak, its last component can be expressed as $\tilde\Psi_{p+1}(X,\Lambda) = \Lambda\tilde\Psi'_{p+1}(X,\Lambda)$, with $\tilde\Psi'_{p+1}$ a unit in $\ofu_{p+1}$.
Similarly, the last component of $\tilde\Psi^{-1}$ can be expressed as $\tilde\Psi^{-1}_{p+1}(X,\Lambda) = \Lambda\tilde\Psi'^{-1}_{p+1}(X,\lambda)$ with $\tilde\Psi'^{-1}_{p+1}$ a unit in $\ofu_{p+1}$.
Then, the last component of $\eta$ is:
$$\sum_{i=1}^p \Lambda\dpar{\tilde\Psi'_{p+1}}{X_i}(X,\Lambda)\eta_i(\tilde\Psi^{-1}(X,\Lambda)) + \left(\tilde\Psi'_{p+1}(X,\Lambda) + \Lambda\dpar{\tilde\Psi'_{p+1}(X,\Lambda)}{\Lambda}\right)\Lambda\tilde\Psi'^{-1}_{p+1}(X,\Lambda)$$
which is just $\Lambda$ times a unit in $\ofu_{p+1}$.
Since $\Lift(\tilde F)$ is an $\ofu_{p+1}$-module, we can multiply $d\Psi\circ \eta\circ\Psi^{-1}$ by the inverse of this unit and obtain a liftable vector field of $\tilde F$ with only $\Lambda$ in the last component, and so $\tilde F$ is substantial.


\end{proof}

\begin{rem}
The first part of the proof actually establishes a bijection between liftable vector fields of $\A$-equivalent map-germs via the diffeomorphism in the target.
This was already done in Lemma 6.1 from \cite{nishimuralifts}.
Our argument here provides a different proof of the same fact.
\end{rem}

\begin{coro}
If two OPSUs are $\phi$-equivalent and one of them is substantial, then both are substantial.
\end{coro}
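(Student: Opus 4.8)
The plan is to reduce the statement immediately to Proposition \ref{propo_weak_subs}, once one observes that a $\phi$-equivalence of OPSUs is in particular a weak equivalence. So the first step is to recall, from the description of the subgroup $G^\phi$ given just after the definition of $\phi$-equivalence, that if two OPSUs $F(x,\lambda) = (f_\lambda(x),\lambda)$ and $F'(x,\lambda) = (f'_\lambda(x),\lambda)$ are $\phi$-equivalent (with $\phi(x,\lambda)=\lambda$), then the realizing diffeomorphisms take the form $h(x,\lambda) = (\bar h_\lambda(x),l(\lambda))$ and $k(X,\Lambda) = (k_\Lambda(X),l(\Lambda))$ for some germ of diffeomorphism $l\colon(\bbk,0)\to(\bbk,0)$.

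The second step is to note that such an $l$ is automatically of the form $l(\lambda)=\lambda u(\lambda)$ with $u\in\ofu_1$ a unit: this is Hadamard's Lemma together with $u(0)=l'(0)\neq 0$. Consequently the last component of $h$ lies in $\mfr_\lambda\ofu_{n+1}$ and the last component of $k$ lies in $\mfr_\Lambda\ofu_{p+1}$, which is exactly the condition for $h$ and $k$ to be weak diffeomorphisms. Hence $F$ and $F'$ are weak equivalent.

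The third and final step is to apply Proposition \ref{propo_weak_subs}: weak equivalent OPSUs are substantial or not substantial simultaneously, so if one of $F, F'$ is substantial then so is the other. I do not anticipate any real obstacle here; the only thing that needs (routine) checking is that the parameter diffeomorphism $l$ is ``a variable times a unit'', which is precisely what makes every $\phi$-equivalence a weak equivalence and lets the heavier Proposition \ref{propo_weak_subs} do the work.
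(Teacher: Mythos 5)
Your proposal is correct, and it follows exactly the paper's intended route: the corollary is stated immediately after Proposition \ref{propo_weak_subs} with no separate proof, precisely because a $\phi$-equivalence (with $\phi(x,\lambda)=\lambda$) is a special case of a weak equivalence, once one notes that the parameter diffeomorphism $l\in\text{Diff}_0(1)$ satisfies $l(\lambda)=\lambda u(\lambda)$ with $u$ a unit by Hadamard's Lemma, so the last components of $h$ and $k$ lie in $\mfr_\lambda\ofu_{n+1}$ and $\mfr_\Lambda\ofu_{p+1}$ respectively. Nothing to add.
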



\begin{ex}\label{ex_substantial}
Coming back to our examples $f(y) = (y^2,y^5)$ and $g(x,y) = (x,y^4 + x^2y + xy^2)$, all their respective OPSUs are weak equivalent.
On one hand, $f$ admits the OPSU $(y^2,y^5+\lambda y,\lambda)$, which is quasihomogeneous and therefore substantial.
Hence, by Proposition \ref{propo_weak_subs} all OPSUs of $f$ are substantial.

On the other hand, using SINGULAR one can check that the OPSU $(x,y^4 + x^2y + xy^2+\lambda y,\lambda)$ of $g$ is not substantial: hence, by Proposition \ref{propo_weak_subs} none of the OPSUs of $g$ are substantial.
Notice that $g$ is not quasihomogeneous, and so one cannot construct a quasihomogeneous OPSU.
\end{ex}

One condition to guarantee that the hypothesis of Proposition \ref{propo_weak_all_weak} is verified would be the fact that $f_p(x)\gamma(x) \in \tae f$: then, any $s(f(x))\gamma(x)$ lies in the same class over $\nae f$ as $s(f_1(x),\ldots,f_{p-1}(x),0)\gamma(x)$.

Note that this condition is not necessary: it can be checked that the vector field $(y^4+xy^2 + x^2y)\left(\begin{matrix}0 \\ y\end{matrix}\right)$ is not in the $\A_e$-tangent space of $11_5$, and yet we found a basis of the $\A_e$-normal space that satisfied our hypothesis.

\begin{definition}
Consider the OPSU $F(x,\lambda)=(f_1(x),\ldots,f_{p-1}(x),f_p(x)+\lambda x^{\alpha},\lambda)$ (which can always be considered due to the previous discussion). 
We say $F$ is {\it cross-substantial} if it admits a liftable vector field $\eta\in\theta_{p+1}$ of the form $\eta(X,\lambda) = (\eta_1(X,\lambda),\ldots,\eta_p(X,\lambda),X_p)$.
\end{definition}

\begin{lem}\label{propo_cross_condition}
If $F(x,\lambda) = (f(x) + \lambda\gamma(x),\lambda)$ is cross-substantial, then $f_p(x)\gamma(x)\in \tae f$.
\end{lem}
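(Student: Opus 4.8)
The plan is to unwind the definition of cross-substantiality, lift the distinguished vector field to the source, and then restrict the lifting equation to $\lambda=0$. Keeping the standing convention $\gamma(x)=(0,\ldots,0,x^{\alpha})$, so that $F(x,\lambda)=(f_1(x),\ldots,f_{p-1}(x),f_p(x)+\lambda x^{\alpha},\lambda)$, cross-substantiality gives a liftable vector field $\eta=(\eta_1,\ldots,\eta_p,X_p)\in\theta_{p+1}$, i.e. some $\xi=(\xi_1,\ldots,\xi_{n+1})\in\theta_{n+1}$ with $dF(\xi)=\eta\circ F$. The idea is that the unusual shape of $\eta$ (last component exactly $X_p$ rather than a multiple of $\Lambda$) forces $\xi_{n+1}$ to be essentially $f_p$, and then this reappears as a cross term in the $p$-th component of the Jacobian equation, producing precisely $f_p(x)\gamma(x)$ inside $\tae f$.

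Concretely, first I would read off the last component of $dF(\xi)=\eta\circ F$: since the $(p+1)$-th component of $F$ is $\lambda$ and $\eta_{p+1}=X_p$, this slot gives $\xi_{n+1}(x,\lambda)=f_p(x)+\lambda x^{\alpha}$, so in particular $\xi_{n+1}(x,0)=f_p(x)$. Next I would restrict the whole identity $dF(\xi)=\eta\circ F$ to $\lambda=0$, noting $F(x,0)=(f(x),0)$, and set $\bar\xi=(\xi_1(x,0),\ldots,\xi_n(x,0))\in\theta_n$, $\bar\eta=(\eta_1(X,0),\ldots,\eta_p(X,0))\in\theta_p$. Computing $dF(\xi)$ and evaluating at $\lambda=0$, the first $p-1$ components give $tf(\bar\xi)_j=(\omega f(\bar\eta))_j$ for $j<p$, while the $p$-th component gives $tf(\bar\xi)_p+x^{\alpha}\xi_{n+1}(x,0)=(\omega f(\bar\eta))_p$, that is $tf(\bar\xi)_p+f_p(x)x^{\alpha}=(\omega f(\bar\eta))_p$ by the previous step. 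Since $f_p(x)\gamma(x)=(0,\ldots,0,f_p(x)x^{\alpha})$, these assemble into
$$tf(\bar\xi)+f_p(x)\gamma(x)=\omega f(\bar\eta),$$
whence $f_p(x)\gamma(x)=\omega f(\bar\eta)-tf(\bar\xi)\in tf(\theta_n)+\omega f(\theta_p)=\tae f$.

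The argument is essentially mechanical, so there is no real obstacle; the only point demanding care is the bookkeeping of the Jacobian of $F$ — specifically that differentiating $f_p(x)+\lambda x^{\alpha}$ with respect to $\lambda$ contributes the term $x^{\alpha}\xi_{n+1}$ to the $p$-th component, and that this term, once $\xi_{n+1}$ is identified with $f_p$ on $\{\lambda=0\}$, is exactly what pushes $f_p(x)\gamma(x)$ into the extended tangent space.
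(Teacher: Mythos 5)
Your proof is correct and follows the same route as the paper's: read off the last component of the lifting equation to identify $\xi_{n+1}$ with $f_p(x)+\lambda x^{\alpha}$, then restrict the full equation to $\lambda=0$ so that the cross term $x^{\alpha}\xi_{n+1}$ in the $p$-th slot produces $f_p(x)\gamma(x)$ inside $tf(\theta_n)+\omega f(\theta_p)$. (The paper writes the liftability condition as $tF(\xi)+\omega F(\eta)=0$ rather than $dF(\xi)=\eta\circ F$, but this is only a sign convention absorbed into $\xi$.)
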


\begin{proof}
Assuming $\gamma(x) = (0,\ldots,0,x^\alpha)$,
let $\eta(X,\lambda) = (\eta_1(X,\Lambda),\ldots,\eta_p(X,\Lambda),X_p)$ be a liftable vector field and $\xi\in\theta_{n+1}$ so that $tF(\xi) + \omega F(\eta)  = 0$.
Looking at the $p+1$ component of this equation, we get: $\xi_{n+1}(x,\lambda) = -\eta_{p+1}\circ F(x,\lambda) = -(f_p(x) + \lambda x^\alpha)$.
Inspecting now the $p$ component allows us to conclude that:
$$\left(d_xf_p + \lambda d_x(x^\alpha)\right)(\xi_1,\ldots,\xi_n)(x,\lambda) + \eta_p(f(x) +\lambda\gamma(x),\lambda) =  x^\alpha(f_p(x) + \lambda x^\alpha)$$

Any other component is of the form $d_xf_i(\xi_1,\ldots,\xi_n)(x,\lambda) + \eta_i\circ F(x,\lambda)=0$.
Take $\tilde{\xi}(x) = \xi(x,0)\in\theta_n$ and $\tilde\eta(X) = \eta(X,0)$ and we have $tf(\tilde\xi)+\omega f(\tilde\eta) = f_p(x)\gamma(x)$.
\end{proof}

Putting all this together we get that cross-substantiality is a sufficient condition for weak equivalence of all OPSUs:
\begin{teo}
If $F(x,\lambda) = (f(x) + \lambda\gamma(x),\lambda)$ is cross-substantial, then all OPSUs of $f$ are weak equivalent.
\end{teo}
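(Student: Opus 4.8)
The plan is to reduce the statement to Proposition \ref{propo_weak_all_weak} by exhibiting a $\bbk$-basis of $\nae f$ whose ``coefficient functions'' $s_i$ can be chosen not to depend on the last target coordinate; cross-substantiality will enter only through Lemma \ref{propo_cross_condition}.

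First I would apply Lemma \ref{propo_cross_condition}: since $F(x,\lambda)=(f(x)+\lambda\gamma(x),\lambda)$ is cross-substantial and $\gamma(x)=(0,\ldots,0,x^\alpha)$ under the standing normalization, we get $f_p(x)\gamma(x)\in\tae f$. Next, recall from \cite{notaopsus} (as already used in the proof of Proposition \ref{propo_normal_opsu}) that, $f$ admitting an OPSU, there is a $\bbk$-basis
\[
\nae f=\operatorname{Sp}_\bbk\lbrace\gamma(x),\,s_1(f(x))\gamma(x),\ldots,s_{k-1}(f(x))\gamma(x)\rbrace
\]
with $k=\aecod(f)$ and $s_i\in\mfr_p$. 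For each $i$, Hadamard's Lemma gives a decomposition $s_i(X)=s_i(X_1,\ldots,X_{p-1},0)+X_p\,r_i(X)$ with $r_i\in\ofu_p$, hence
\[
s_i(f(x))\gamma(x)=s_i(f_1(x),\ldots,f_{p-1}(x),0)\gamma(x)+r_i(f(x))\,f_p(x)\gamma(x).
\]
Since $\tae f=tf(\theta_n)+\omega f(\theta_p)$ is stable under multiplication by pulled-back functions $f^*\ofu_p$ (the first summand is an $\ofu_n$-module and the second an $f^*\ofu_p$-module), the term $r_i(f(x))f_p(x)\gamma(x)$ lies in $\tae f$. Therefore $s_i(f(x))\gamma(x)$ and $\tilde s_i(f(x))\gamma(x)$ represent the same class in $\nae f$, where $\tilde s_i(X):=s_i(X_1,\ldots,X_{p-1},0)\in\mfr_p$ is independent of $X_p$. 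Replacing each $s_i$ by $\tilde s_i$ thus yields again a $\bbk$-basis of $\nae f$, now of the form required in Proposition \ref{propo_weak_all_weak}.

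Finally, Proposition \ref{propo_weak_all_weak} applies verbatim, so all OPSUs of $f$ are weak equivalent, which is the claim. The only point that needs a small argument rather than bookkeeping is the module property of $\tae f$ over $f^*\ofu_p$ that lets me absorb $r_i(f(x))f_p(x)\gamma(x)$ into the tangent space; this is immediate from the definitions of $tf$ and $\omega f$, and in fact the whole reduction is the remark stated just before the definition of cross-substantiality made precise.
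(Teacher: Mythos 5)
Your argument is correct and takes essentially the same route as the paper's: invoke Lemma \ref{propo_cross_condition} to get $f_p(x)\gamma(x)\in\tae f$, trade each $s_i$ for the $X_p$-independent $\tilde s_i(X)=s_i(X_1,\ldots,X_{p-1},0)$, and conclude via Proposition \ref{propo_weak_all_weak}. The Hadamard decomposition $s_i(X)=\tilde s_i(X)+X_p r_i(X)$ and the observation that $\tae f$ is an $f^*\ofu_p$-module (so $r_i(f)f_p\gamma\in\tae f$) are precisely the details the paper's one-line proof, and the remark preceding the definition of cross-substantiality, leave implicit.
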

\begin{proof}
Since $f$ admits an OPSU, we can pick a basis for $\nae f$ of the form:
$$\nae f =\operatorname{Sp}_\bbk\left\lbrace\gamma(x),s_1(f(x))\gamma(x),\ldots,s_{k-1}(f(x))\gamma(x)\right\rbrace$$
By Lemma \ref{propo_cross_condition}, $f_p(x)\gamma(x)\in \tae f$, and so each $s_i(f(x))\gamma(x)$ lies on the same class over $\nae f$ as $s_i(f_1(x),\ldots,f_{p-1}(f(x)),0)$, providing another basis that satisfies the conditions in \ref{propo_weak_all_weak}
\end{proof}

\begin{prop}
All corank 1 map-germs of $\A_e$-codimension $1$ are cross-substantial.
\end{prop}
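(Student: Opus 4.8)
The plan is to deduce cross-substantiality from substantiality, using that $\aecod(f)=1$ forces the module of liftable vector fields to be as large as possible. The first step I would record is a general lemma: if $\aecod(f)=1$ and $F$ is a \emph{substantial} OPSU of $f$, then $F$ is cross-substantial. Indeed, by Damon's Theorem \ref{thm_damon} in the form used in Section 3, $1=\aecod(f)=\dim_\bbc\ofu_{p+1}/\bigl(d\pi(\Lift(F))+\langle\Lambda\rangle\bigr)$, so the ideal $d\pi(\Lift(F))+\langle\Lambda\rangle$ coincides with $\mfr_{p+1}$ (it is contained in $\mfr_{p+1}$ because $\aecod(f)\neq 0$, and the quotient has dimension $1$). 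Substantiality gives $\Lambda\in d\pi(\Lift(F))$, and since $d\pi(\Lift(F))$ is an ideal we get $d\pi(\Lift(F))=\mfr_{p+1}$; in particular $X_p\in d\pi(\Lift(F))$. Writing $X_p=\sum_j g_j\,\eta^{(j)}_{p+1}$ with $g_j\in\ofu_{p+1}$ and $\eta^{(j)}\in\Lift(F)$, and using that $\Lift(F)$ is an $\ofu_{p+1}$-module, the field $\eta=\sum_j g_j\eta^{(j)}\in\Lift(F)$ has $(p+1)$-st component exactly $X_p$, i.e. $F$ is cross-substantial.

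It then remains to exhibit a substantial OPSU of $f$. For this I would use that a corank $1$ map-germ of $\mathcal{A}_e$-codimension $1$ is $\A$-equivalent to a quasi-homogeneous germ, which is contained in the classifications of codimension-$1$ germs (\cite{rieger2to2}, \cite{mond2en3}, \cite{marartari}, \cite{houstonkirk}, \cite{coopermondwik}). By the discussion following Definition \ref{def_degsus}, a quasi-homogeneous $f$ admitting an OPSU admits a quasi-homogeneous one of the form $(f(x)+\lambda\gamma(x),\lambda)$ with $\gamma$ the single monomial generator of $\nke f$ (placed, after a permutation of target coordinates, in the last slot); since $\aecod(f)=1$ this is, up to $\phi$-equivalence, the only OPSU of $f$ (Proposition \ref{propo_normal_opsu} with $k=1$), so it is the OPSU appearing in the definition of cross-substantiality. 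A quasi-homogeneous OPSU is substantial, its target Euler vector field being liftable, so by the first step it is cross-substantial, which would complete the proof.

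The step I expect to need the most care is the reduction to a quasi-homogeneous normal form, since as phrased it leans on the classification results; I would therefore also keep a self-contained alternative in reserve that avoids substantiality altogether. For a corank $1$ germ the discriminant $\Delta F$ of the stable OPSU is a hypersurface whose reduced equation can be brought, after a coordinate change, to the shape $h=X_p^2-X_{p-1}\,Q(X_1,\dots,X_{p-1},\Lambda)^2$ (with an analogous shape when $n\geq p$): one obtains this by using the components $f_1,\dots,f_{p-1}$ to remove from $f_p$ the monomials independent of the corank-$1$ variable and the multiples of $f_{p-1}$, so that $f_p+\lambda\gamma$ takes the form $y\,Q$. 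One then checks by a direct computation that
\begin{equation*}
\eta=\bigl(Q+2X_{p-1}Q_{X_{p-1}},\,0,\dots,0,\,-2X_{p-1}Q_{X_1},\,X_{p-1}QQ_\Lambda,\,X_p\bigr)
\end{equation*}
annihilates $h$, hence lies in $\operatorname{Derlog}(\Delta F)=\Lift(F)$, and since its last component is $X_p$ this gives cross-substantiality directly. Either way, the essential point is that for $\aecod(f)=1$ the module $\Lift(F)$ must be large enough to contain a lift of the coordinate $X_p$.
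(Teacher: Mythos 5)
Your main argument is essentially the paper's: cite \cite{coopermondwik} to get a quasi-homogeneous (hence substantial) OPSU, use the codimension-$1$ uniqueness of the OPSU up to an equivalence preserving substantiality, and then apply Damon's Theorem \ref{thm_damon} together with substantiality to conclude $d\pi(\Lift(F))=\mfr_{p+1}\ni X_p$. The paper propagates substantiality through versality and weak equivalence (Proposition \ref{propo_weak_subs}) rather than via Proposition \ref{propo_normal_opsu} and $\phi$-equivalence, which is an organizational rather than a mathematical difference; your appended ``in reserve'' construction of an explicit liftable field from a normal form $X_p^2 - X_{p-1}Q^2$ of the discriminant equation is a genuinely different route, but as written it rests on an unverified claim about that normal form and would need to be checked across the classification before it could stand on its own.
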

\begin{proof}
In \cite{coopermondwik} it is proven that all corank 1 map-germs of $\A_e$-codimension 1 are $\A$-equivalent to a quasi-homogeneous map-germ.
All quasi-homogeneous map-germs admit a substantial OPSU (see discussion after Definition \ref{def_degsus}).
All OPSUs of an $\A_e$-codimension 1 map-germ are versal unfoldings, hence all of them are equivalent as unfolding after a pull-back, and therefore all of them are weak equivalent.
By Proposition \ref{propo_weak_subs}, this implies all of them are substantial.

Apply now Damon's Theorem \ref{thm_damon} to see that for any corank 1 $\mononp f$ of $\A_e$-codimension 1, and any OPSU $F$:
$$1 = \dim_\bbk\frac{\ofu_{p+1}}{d\pi(\Lift(F))+\langle\Lambda\rangle} = \dim_\bbk\frac{\ofu_{p+1}}{d\pi(\Lift(F))} $$
where the second equality comes from the fact that $F$ is substantial.
Now, to have codimension 1 this means that the only non-zero class is that of the constant functions, implying $X_p\in d\pi(\Lift(F))$.
\end{proof}

Finally, cross-substantiality can be ensured by checking a condition over the defining function of the image of $F$:

\begin{prop}
Let $H\colon(\bbk^p\times\bbk,0)\to(\bbk,0)$ be a defining function for the image of the OPSU $F(x,\lambda) = (f(x) + \lambda\gamma(x),\lambda)$.
If $X_p\in J_XH = \operatorname{Sp}_{\ofu_{p+1}}\left\lbrace \dpar{H}{X_1},\ldots,\dpar{H}{X_p}\right\rbrace$, then $F$ is cross-substantial.
\end{prop}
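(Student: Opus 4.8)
The plan is to produce the required liftable vector field by hand. Since an OPSU is by definition a stable map-germ, we have $\Lift(F)=\operatorname{Derlog}(\Delta F)$, where $\Delta F$ is the image of $F$, the hypersurface $V(H)\subseteq(\bbk^{p+1},0)$; concretely, a vector field $\eta=\sum_{i=1}^{p}\eta_i\dpar{}{X_i}+\eta_{p+1}\dpar{}{\Lambda}\in\theta_{p+1}$ lies in $\Lift(F)$ exactly when $\sum_{i=1}^{p}\eta_i\dpar{H}{X_i}+\eta_{p+1}\dpar{H}{\Lambda}\in\langle H\rangle$. So it suffices to exhibit such an $\eta$ with $\eta_{p+1}=X_p$.

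First I would use the hypothesis $X_p\in J_XH$ to fix functions $a_1,\ldots,a_p\in\ofu_{p+1}$ with $X_p=\sum_{i=1}^{p}a_i\dpar{H}{X_i}$. Then I would set
$$\eta \;=\; -\dpar{H}{\Lambda}\sum_{i=1}^{p}a_i\dpar{}{X_i} \;+\; X_p\,\dpar{}{\Lambda},$$
which belongs to $\theta_{p+1}$ (its $X_i$-components $-a_i\dpar{H}{\Lambda}$ lie in $\ofu_{p+1}$) and whose last component is precisely $X_p$, so it has the shape required in the definition of cross-substantiality. A one-line computation then gives
$$-\dpar{H}{\Lambda}\sum_{i=1}^{p}a_i\dpar{H}{X_i}+X_p\dpar{H}{\Lambda} \;=\; -\dpar{H}{\Lambda}\,X_p+X_p\dpar{H}{\Lambda}\;=\;0\in\langle H\rangle,$$
so $\eta\in\operatorname{Derlog}(\Delta F)=\Lift(F)$; hence $F$ is cross-substantial.

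The argument is essentially a syzygy trick and carries no serious obstacle: $\eta$ is a ``relative Euler field'' in the $X_p$-direction, built so that the multiplier $\dpar{H}{\Lambda}$ cancels the expression $X_p$ against itself, much as the genuine Euler field witnesses substantiality for a quasi-homogeneous OPSU. The only points that genuinely need attention are (i) invoking $\Lift(F)=\operatorname{Derlog}(\Delta F)$, which is immediate from stability of $F$, and (ii) keeping the bookkeeping straight, namely that the coordinate $X_p$ in the hypothesis is the same target coordinate that plays the role of the ``cross'' direction in the normal form $F(x,\lambda)=(f_1(x),\ldots,f_{p-1}(x),f_p(x)+\lambda x^\alpha,\lambda)$ used throughout this section; that identification is built into the set-up and requires nothing extra.
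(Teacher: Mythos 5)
Your proof is correct and follows essentially the same route as the paper: pick a syzygy $X_p=\sum_i a_i\,\partial H/\partial X_i$, build the vector field $\eta$ that pairs $\partial H/\partial\Lambda$ against those coefficients to kill $H$, and conclude $\eta\in\operatorname{Derlog}(\Delta F)=\Lift(F)$. The only cosmetic difference is a global sign: you wrote $\eta$ with last component $+X_p$, the paper writes it with $-X_p$, which makes your version match the cross-substantiality definition on the nose (the paper's is then off by a unit, harmless since $\Lift(F)$ is an $\mathcal O_{p+1}$-module).
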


\begin{proof}
If $X_p\in J_XH$, there exist some $s_1,\ldots,s_p\in\ofu_{p+1}$ such that:
$$X_p = \sum_{i=1}^p s_i(X,\Lambda)\dpar{H}{X_i}(X,\Lambda)$$
Write $\dpar{}{X_1},\ldots,\dpar{}{X_p},\dpar{}{\Lambda}$ for the constant vector fields of $\theta_{p+1}$.
Then, the vector field
$$\eta = \dpar{H}{\Lambda}(X,\Lambda)\sum_{i=1}^p s_i(X,\Lambda)\dpar{}{X_i} - X_p\dpar{}{\Lambda}$$
satisfies  $\eta(H) = 0$ and so belongs to $\operatorname{Derlog}(\Delta F) = \operatorname{Lift}(F)$ (see page 283 in \cite{nunomond} for details).
\end{proof}

\begin{ex}\label{ex_cross}
The curve $(y^2,y^5)$ admits the substantial OPSU $F(y,\lambda) = (y^2, y^5 +\lambda y,\lambda)$, which has as a defining equation the polynomial $H(X,Y,\Lambda) = Y^2 + X(X^2+\Lambda)^2$.
Computing the Jacobian of $H$ with respect to $X,Y$, we see that $Y$ belongs to it.
Hence, $F$ is cross-substantial.

On the other hand, it can be checked via SINGULAR that the OPSU of $11_5$ given by $(x,y^4 + xy^2+x^2y+ \lambda y)$ is not cross-substantial, even though all of its OPSUs are weak equivalent.

\end{ex}

\section{Relative spaces and $\phi$-equivalence}

Recall our setting: $f\colon(\bbk^n,0)\to(\bbk^p,0)$ is a smooth map-germ that admits an OPSU $F(x,\lambda) = (f_\lambda(x),\lambda)$.
Denote $\bar{f}(x,\lambda) = f_\lambda(x)$.
We will be concerned with $\phi$-equivalence with $\phi(x,\lambda) = \lambda$.

The space of vector fields in $\theta_{m+1}$ with last component equal to 0 will be denoted by $\theta_{m+1/1}$.
With this notation, the $\phi$-extended tangent space of $\bar{f}$ can be described like this:
\begin{equation*}
TG^\phi_e\bar{f} = \left\lbrace d_x\bar f(\xi(x,\lambda)) + d_\lambda\bar f(\tilde\xi(\lambda)) + \eta(\bar{f}(x,\lambda),\lambda) : \xi\in\theta_{n+1/1},\tilde\xi\in\ofu_1, \eta\in\theta_{p+1/1}\right\rbrace
\end{equation*}
This space contains what is called the \textit{relative $\A_e$-tangent space to $F$}.
To describe it, denote by $t_\text{rel}F$ the restriction of $tF$ to vector fields in $\theta_{n+1/1}$, and by $\omega_{\text{rel}}F$ the restriction of $\omega F$ to $\theta_{p+1/1}$.
The images of these restrictions both lie on the set of vector fields in $\theta(F)$ with last component equal to zero, which can be naturally identified with the space $\theta(\bar{f})$.
Hence we can consider both mappings as being defined in the following way:
\begin{equation*}
\begin{split}
t_\text{rel}F&\colon\theta_{n+1/1}\to\theta(\bar{f}) \\
\omega_\text{rel}F&\colon\theta_{p+1/1}\to\theta(\bar{f})
\end{split}
\end{equation*}
Then, the relative $\A_e$-tangent and relative $\A_e$-normal spaces to $F$ are defined as:
\begin{equation*}
\begin{split}
\tae (F/1) &= t_\text{rel}F(\theta_{n+1/1})+\omega_\text{rel}F(\theta_{p+1/1})\\
\nae (F/1) &= \frac{\theta(\bar{f})}{t_\text{rel}F(\theta_{n+1/1})+\omega_\text{rel}F(\theta_{p+1/1})}
\end{split}
\end{equation*}

We can also define a mapping $t_1\bar f\colon \ofu_1 \to \theta(\bar f)$ given by $t_1\bar f(\tilde\xi) = d_\lambda\bar f(\tilde\xi) = \tilde\xi\cdot d_\lambda\bar f$.
With this notation, $TG_e^\phi\bar f$ can be rewritten as:
$$TG_e^\phi\bar f = t_\text{rel}F(\theta_{n+1/1}) + t_1\bar f(\ofu_1) + \omega_\text{rel}F(\theta_{p+1/1})$$
Hence $\tae (F/1) \subseteq TG_e^\phi\bar{f}$.

The following result allows one to compare the relative version of the tangent space of an unfolding with the normal space of the map-germ it unfolds.
The general version can be found as Lemma 3.5 in \cite{nunomond}: we only write the case for 1-parameter unfoldings.
Note that every vector field in $\theta(f)$ can be naturally considered as a vector field in $\theta(\bar f)$:
\begin{lem}\label{lemma_relative_normal}
Let $\mononp f$ be a smooth map-germ with $F\colon(\bbk^n\times\bbk,0)\to(\bbk^p\times\bbk,0)$ a 1-parameter unfolding.
Let $\gamma_1,\ldots,\gamma_k\in\theta(f)$.
The following are equivalent:
\begin{itemize}
\item  $\nae f = \operatorname{Sp}_\bbk\lbrace\gamma_1,\ldots,\gamma_k\rbrace$.
\item $\nae (F/1) = \operatorname{Sp}_{\ofu_1}\lbrace\gamma_1,\ldots,\gamma_k\rbrace$, through the natural inclusion in $\theta(\bar f)$.
\end{itemize}
\end{lem}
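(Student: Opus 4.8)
The plan is to regard $M:=\nae(F/1)$ as a module over $\ofu_1$ in which $\lambda$ acts by multiplication, identify its reduction modulo $\lambda$ with $\nae f$, and then let the Malgrange preparation theorem translate between ``generated over $\ofu_1$'' and ``generated over $\bbk$ modulo $\lambda$''.

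First I would record the module structure of $M$. As a quotient of $\theta(\bar f)\cong\ofu_{n+1}^p$ it is a finitely generated $\ofu_{n+1}$-module, and it is also an $\ofu_1$-module via the projection $(x,\lambda)\mapsto\lambda$ (equivalently via $\pi\circ F$): the space $\tae(F/1)$ is stable under multiplication by $\lambda$, since $\lambda\cdot d_x\bar f(\xi)=d_x\bar f(\lambda\xi)$ with $\lambda\xi\in\theta_{n+1/1}$, and $\lambda\cdot(\eta\circ F)=(\Lambda\,\eta)\circ F$ with $\Lambda\,\eta\in\theta_{p+1/1}$. Note also that the natural inclusion $\theta(f)\hookrightarrow\theta(\bar f)$ (vector fields along $\bar f$ not depending on $\lambda$) is a section of the reduction $\theta(\bar f)\twoheadrightarrow\theta(\bar f)/\lambda\theta(\bar f)=\theta(f)$.

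The key step, and the only one requiring actual computation, is the canonical isomorphism
\[
M/\lambda M\;\cong\;\nae f,
\]
compatible with the $\gamma_i$ in the sense that the class of $\gamma_i$ in $M/\lambda M$ corresponds to its class in $\nae f$. For this I would check that reduction modulo $\lambda$ carries $\tae(F/1)$ onto $\tae f$. Setting $\lambda=0$ turns $d_x\bar f$ into the Jacobian matrix of $f$ and lets the coefficients range over $\ofu_n$, so $t_\text{rel}F(\theta_{n+1/1})$ maps onto $tf(\theta_n)$; similarly, taking $\eta\in\theta_{p+1/1}$ of the form $\eta(X,\Lambda)=\zeta(X)$ with $\zeta\in\theta_p$ gives $\omega_\text{rel}F(\eta)=\eta(\bar f(x,\lambda),\lambda)\mapsto\zeta\circ f$, so $\omega_\text{rel}F(\theta_{p+1/1})$ maps onto $\omega f(\theta_p)$. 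Hence $\tae(F/1)+\lambda\theta(\bar f)$ is exactly the preimage of $\tae f$ under $\theta(\bar f)\twoheadrightarrow\theta(f)$, which yields the isomorphism; compatibility with the $\gamma_i$ is immediate because $\theta(f)\hookrightarrow\theta(\bar f)\twoheadrightarrow\theta(f)$ is the identity.

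Finally I would invoke the module form of the Malgrange preparation theorem for the submersion $(x,\lambda)\mapsto\lambda$: a finitely generated $\ofu_{n+1}$-module $M$ is finitely generated over $\ofu_1$ if and only if $M/\lambda M$ is finite-dimensional over $\bbk$, and in that case given elements generate $M$ over $\ofu_1$ precisely when their residues generate $M/\lambda M$ over $\bbk$. The equivalence in the lemma then follows: if $M=\operatorname{Sp}_{\ofu_1}\lbrace\gamma_1,\dots,\gamma_k\rbrace$, reducing modulo $\lambda$ gives $\nae f=M/\lambda M=\operatorname{Sp}_{\bbk}\lbrace\gamma_1,\dots,\gamma_k\rbrace$; conversely, if $\nae f=\operatorname{Sp}_{\bbk}\lbrace\gamma_1,\dots,\gamma_k\rbrace$ then $M/\lambda M$ is finite-dimensional, so $M$ is finitely generated over $\ofu_1$, and since the $\gamma_i$ span $M/\lambda M$ they generate $M$ over $\ofu_1$. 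I expect everything to be routine once the identification $M/\lambda M\cong\nae f$ is established, and that identification is exactly the one-parameter instance of Lemma 3.5 in \cite{nunomond}, which could also simply be quoted.
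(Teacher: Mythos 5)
The paper offers no proof of this lemma at all: it is stated as the one-parameter case of Lemma~3.5 of \cite{nunomond} and simply cited. So there is no in-house argument to compare against; your proposal is an attempt to actually supply the proof, and you correctly identify at the end that quoting the reference would suffice. The structural idea (identify $\nae(F/1)/\lambda\,\nae(F/1)$ with $\nae f$, then lift generators via preparation plus Nakayama) is indeed the standard route, and your verification that reduction modulo $\lambda$ carries $\tae(F/1)$ onto $\tae f$ is correct and cleanly done.

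There is, however, a genuine gap in the preparation-theorem step. You assert that $M=\nae(F/1)$ ``as a quotient of $\theta(\bar f)\cong\ofu_{n+1}^p$'' is a finitely generated $\ofu_{n+1}$-module, and then apply Malgrange preparation to the projection $(x,\lambda)\mapsto\lambda$. But $\tae(F/1)$ is \emph{not} an $\ofu_{n+1}$-submodule of $\theta(\bar f)$: while $t_{\mathrm{rel}}F(\theta_{n+1/1})$ is, the summand $\omega_{\mathrm{rel}}F(\theta_{p+1/1})$ is only closed under multiplication by functions pulled back from the target, i.e.\ it is an $\ofu_{p+1}$-submodule via $F^*$, not an $\ofu_{n+1}$-submodule. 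So the quotient $M$ carries no $\ofu_{n+1}$-module structure, and the preparation theorem in the form you invoke does not apply. (Your observation that $M$ is an $\ofu_1$-module via $\lambda$ is correct, but that by itself gives no finiteness.) The fix is to route through the $\ofu_{p+1}$-structure: first apply preparation to $F\colon(\bbk^{n+1},0)\to(\bbk^{p+1},0)$ to the genuinely $\ofu_{n+1}$-finite module $\theta(\bar f)/t_{\mathrm{rel}}F(\theta_{n+1/1})$, using that $F$ is stable (hence $\K$-finite) to get finite generation over $\ofu_{p+1}$; pass to the quotient by the $\ofu_{p+1}$-submodule coming from $\omega_{\mathrm{rel}}F$; and only then apply preparation to the projection $\pi\colon(\bbk^{p+1},0)\to(\bbk,0)$, where now $\pi^*\mfr_1 M=\lambda M$ and the residue module is your correctly identified $\nae f$. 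With this two-step preparation in place the Nakayama argument at the end goes through as you wrote it, and the easy direction (reduction modulo $\lambda$ of a generating set) needs no preparation at all.
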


We see that the relative normal space of an unfolding is essentially the normal space of the base map-germ multiplied by functions depending only on the parameter.
This hints at the elements that may be missing in the $\phi$-tangent space of a given OPSU: our next goal will be trying to compute them.
Recall that we denoted by $\mfr_\lambda$ the space of all functions in $\ofu_{n+1}$ multiplied by $\lambda$.
Then, $\mfr_\lambda\theta(\bar{f})$ is the space of all vector fields in $\theta(\bar{f})$ multiplied by $\lambda$, which is an $\ofu_{p+1}$-module via $F$.

\begin{lem}\label{lemma_mlambda}
Assume $\theta(f)$ is finitely generated as an $\ofu_p$-module via $f$.
For any OPSU $F$ in the form of Equation \ref{eq_normal_opsu}, $F$ is substantial if and only if $\mfr_\lambda\theta(\bar{f})\subseteq \tae(F/1)$.
\end{lem}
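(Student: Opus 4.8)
The plan is to split the equivalence through the intermediate statement ``$F$ is substantial $\iff\lambda\,d_\lambda\bar f\in\tae(F/1)$'', and then to upgrade that single membership to the whole module $\mfr_\lambda\theta(\bar f)$ using the ring structure of $\tae(F/1)$. First I would prove that \emph{reformulation}. For the forward implication, note that $\Lift(F)$ is an $\ofu_{p+1}$-module via $F$ (it equals $\operatorname{Derlog}(\Delta F)$ since $F$ is stable), so $\delta(F)=1$, i.e. $\Lambda\in d\pi(\Lift(F))$, actually produces a single $\eta=(\eta_1,\dots,\eta_p,\Lambda)\in\Lift(F)$. By Remark \ref{rem_subs} (or just reading the last coordinate of $tF(\xi)=\omega F(\eta)$) its lift $\xi\in\theta_{n+1}$ has $\xi_{n+1}(x,\lambda)=\lambda$, and the first $p$ coordinates of $tF(\xi)=\omega F(\eta)$ then read
$$d_x\bar f(\xi_x)+\lambda\,d_\lambda\bar f=(\eta_1,\dots,\eta_p,0)\circ F,$$
so $\lambda\,d_\lambda\bar f=\omega_\text{rel}F(\eta')-t_\text{rel}F(\xi')\in\tae(F/1)$ with $\eta'=(\eta_1,\dots,\eta_p,0)\in\theta_{p+1/1}$, $\xi'=(\xi_1,\dots,\xi_n,0)\in\theta_{n+1/1}$.

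For the converse of the reformulation, if $\lambda\,d_\lambda\bar f=t_\text{rel}F(\xi')+\omega_\text{rel}F(\eta')$ with $\xi'=(\xi_1,\dots,\xi_n,0)$ and $\eta'=(\eta_1,\dots,\eta_p,0)$, then putting $\eta=(\eta_1,\dots,\eta_p,\Lambda)$ and $\xi=(-\xi_1,\dots,-\xi_n,\lambda)$ one checks $tF(\xi)=\omega F(\eta)$ coordinate by coordinate (the last coordinate is $\lambda=\Lambda\circ F$, the first $p$ are exactly the displayed relation rearranged), so $\eta\in\Lift(F)$ has last component $\Lambda$ and $\delta(F)=1$. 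This already gives the easy implication ``$\Leftarrow$'' of the lemma: if $\mfr_\lambda\theta(\bar f)\subseteq\tae(F/1)$ then in particular $\lambda\,d_\lambda\bar f\in\mfr_\lambda\theta(\bar f)\subseteq\tae(F/1)$, hence $F$ is substantial.

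For the implication ``$\Rightarrow$'' I would use two facts. First, $\tae(F/1)$ is a module over the subring $F^*\ofu_{p+1}=\{c\circ F:c\in\ofu_{p+1}\}\subseteq\ofu_{n+1}$: multiplying an element of $\omega_\text{rel}F(\theta_{p+1/1})$ by $c\circ F$ keeps it of the same form since $(c\circ F)(\eta'\circ F)=(c\eta')\circ F$, and similarly $(c\circ F)\,d_x\bar f(\xi')=d_x\bar f((c\circ F)\xi')$, with last components still $0$. Second, a refined form of stability: writing an arbitrary $v\in\theta(\bar f)$ as $(v,0)=tF(\xi)+\omega F(\eta)$ (possible since $\tae F=\theta(F)$ for $F$ stable), the $(p+1)$-st coordinate forces $\xi_{n+1}=-\eta_{p+1}\circ F\in F^*\ofu_{p+1}$, and the first $p$ coordinates give $v=t_\text{rel}F(\xi')+\xi_{n+1}\,d_\lambda\bar f+\omega_\text{rel}F(\eta')$, i.e.
$$\theta(\bar f)=\tae(F/1)+(F^*\ofu_{p+1})\,d_\lambda\bar f.$$
Since $\lambda=\Lambda\circ F\in F^*\ofu_{p+1}$ and $\lambda\,d_\lambda\bar f\in\tae(F/1)$ by the reformulation, we conclude
$$\mfr_\lambda\theta(\bar f)=\lambda\,\theta(\bar f)\subseteq\lambda\,\tae(F/1)+(F^*\ofu_{p+1})\big(\lambda\,d_\lambda\bar f\big)\subseteq\tae(F/1),$$
using that $\tae(F/1)$ absorbs multiplication by $F^*\ofu_{p+1}$.

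The step I expect to be the main obstacle is getting the \emph{refined} stability relation, where the coefficient of $d_\lambda\bar f$ lies in the small subring $F^*\ofu_{p+1}$ and not merely in all of $\ofu_{n+1}$: a naive projection of $\tae F=\theta(F)$ onto the first $p$ coordinates only gives $\theta(\bar f)=\tae(F/1)+\ofu_{n+1}\,d_\lambda\bar f$, which is useless here because $\tae(F/1)$ is not an $\ofu_{n+1}$-module. The finite-generation hypothesis on $\theta(f)$ (equivalently, $\ofu_{n+1}$ is module-finite over $F^*\ofu_{p+1}$ via $F$) and the normal form of Equation \ref{eq_normal_opsu} (which makes $d_\lambda\bar f$ a unit multiple of $\gamma$) keep the ambient objects under control, but the real content is the last-coordinate bookkeeping above together with the $F^*\ofu_{p+1}$-module structure of $\tae(F/1)$.
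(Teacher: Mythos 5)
Your proof is correct, and it takes a genuinely different route from the paper's. The shared starting point is the observation that substantiality of $F$ is equivalent to $\lambda\,d_\lambda\bar f\in\tae(F/1)$ (which the paper also records midway through its argument, reading off the last coordinate of the lift). From there the paper proceeds via Nakayama: it uses the finite-generation hypothesis on $\theta(f)$ to make $\mfr_\lambda\theta(\bar f)$ module-finite over $F^*\ofu_{p+1}$, and the explicit normal form of Equation~\ref{eq_normal_opsu} to expand $\lambda\,d_\lambda\bar f=\lambda\gamma+(\text{terms in }F^*\mfr_{p+1}\mfr_\lambda\theta(\bar f))$, so that the single generator $\lambda\gamma$ lands in $\tae(F/1)+F^*\mfr_{p+1}\mfr_\lambda\theta(\bar f)$ and Nakayama finishes. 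You instead extract a refined stability decomposition
$$\theta(\bar f)=\tae(F/1)+(F^*\ofu_{p+1})\,d_\lambda\bar f,$$
by reading the last coordinate of $(v,0)=tF(\xi)+\omega F(\eta)$ to get $\xi_{n+1}\in F^*\ofu_{p+1}$, and then simply multiply by $\lambda\in F^*\ofu_{p+1}$, using the $F^*\ofu_{p+1}$-module structure of $\tae(F/1)$. This is more elementary: it avoids Nakayama altogether and, as you note, it does not actually invoke the finite-generation hypothesis nor the explicit form of the $q_i,s_i$ in the normal form — only stability of $F$ and the reformulation of substantiality. (The only role left for the normal form in your version is to ensure, via Proposition~\ref{propo_normal_opsu}, that the lemma applies to all OPSUs up to $\phi$-equivalence, which is how it is used downstream.) A small wording remark: you write ``produces a single $\eta$'' in the forward direction of the reformulation; since $\delta(F)=1$ means $\Lambda\in d\pi(\Lift(F))$ rather than literally that $\Lambda$ is the last component of some generator, you should add the (one-line) observation that any $\eta\in\Lift(F)$ with $d\pi(\eta)=\Lambda u$ for a unit $u$ can be scaled by $u^{-1}\in\ofu_{p+1}$ to achieve $d\pi(\eta)=\Lambda$, using that $\Lift(F)$ is an $\ofu_{p+1}$-module.
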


\begin{proof}
Assume $F$ is substantial.
We will prove the result by applying Nakayama's lemma.
Having an OPSU in the form of Equation \ref{eq_normal_opsu} means that we have a stable $F(x,\lambda) = (\bar{f}(x,\lambda),\lambda)$, with $\bar{f}(x,\lambda) = f(x) + \left(1+\sum_{i=1}^{k-1}q_i(\lambda)s_i(f(x))\right)\lambda\gamma(x)$, so that $\gamma\in\theta(f)$, $s_1,\ldots,s_{k-1}\in\mfr_{p}$ and:
\begin{equation}\label{eq_nae_f}
\nae f = \linsp{\bbk}{\gamma(x),s_1(f(x))\gamma(x),\ldots,s_{k-1}(f(x))\gamma(x)}
\end{equation}
Recall that $\tae(F/1)$ is an $\ofu_{p+1}$-module via $F$.
Since $F$ is an unfolding of $f$ and $\theta(f)$ is finitely generated as an $\ofu_p$-module via $f$, $\mfr_\lambda\theta(\bar{f})$ is finitely generated as an $\ofu_{p+1}$-module via F.
To apply Nakayama's lemma we need to check:
$$\mfr_\lambda\theta(\bar f) \subseteq \tae (F/1) + F^*\mfr_{p+1}\mfr_\lambda\theta(\bar{f})$$
Now, by Lemma \ref{lemma_relative_normal} and Equation \ref{eq_nae_f}, it suffices to check that: $$\operatorname{Sp}_{\mfr_\lambda}\left\lbrace\gamma(x),s_1(f(x))\gamma(x),\ldots,s_{k-1}(f(x))\gamma(x)\right\rbrace \subseteq \tae (F/1) + F^*\mfr_{p+1}\mfr_\lambda\theta(\bar{f})$$
Using the fact that $\bar f$ is equal to $f$ plus an element in $\mfr_\lambda\theta(\bar f)$, we see that:
\begin{align*}
F^*\mfr_{p+1}\mfr_\lambda\theta(\bar f)
& = \linsp{\ofu_{n+1}}{\lambda\bar{f}_1(x,\lambda),\ldots,\lambda\bar{f}_p(x,\lambda),\lambda^2}\theta(\bar f) \\
& = \linsp{\ofu_{n+1}}{\lambda{f}_1(x),\ldots,\lambda{f}_p(x),\lambda^2}\theta(\bar f)
\end{align*}
Hence, we only need to check that $\lambda\gamma(x) \in \tae (F/1) + F^*\mfr_{p+1}\mfr_\lambda\theta(\bar{f})$.
But since $F$ is substantial, there exists $\eta\in\theta_{p+1}$ of the form $\eta(X,\Lambda) = (\tilde{\eta}(X,\Lambda),\Lambda)$ and a $\xi\in\theta_{n+1}$ of the form $\xi(x,\lambda) = (\tilde{\xi}(x,\lambda),\lambda)$ such that $\eta\circ F = dF\circ\xi$, that is:
$$\tilde{\eta}\circ F = d_x\bar f(\tilde\xi) +  d_\lambda\bar{f}(\lambda) =d_x\bar f(\tilde\xi) +  \lambda d_\lambda\bar{f} $$
This means that $\lambda d_\lambda\bar{f} \in\tae(F/1)$.
Derive $\bar{f}$ with respect to $\lambda$ to check that:
$$\lambda d_\lambda\bar f = \lambda\gamma(x) + \lambda\left(\sum_{i=1}^{k-1}\left(q_i(\lambda) + \lambda q_i^\prime(\lambda)\right)s_i(f(x))  \right)\gamma(x)$$
Observe now that since $q_i(0) = 0$ for each $i=1,\ldots,k-1$: $$\lambda\left(\sum_{i=1}^{k-1}\left(q_i(\lambda) + \lambda q_i^\prime(\lambda)\right)s_i(f(x))  \right)\gamma(x)\in\linsp{\ofu_{n+1}}{\lambda^2}\theta(\bar{f}) \subseteq F^*\m_{p+1}\mfr_\lambda\theta(\bar{f}).$$ 
Hence, $\lambda\gamma(x)\in  \tae (F/1) + F^*\mfr_{p+1}\mfr_\lambda\theta(\bar{f})$.

Now, assuming $\mfr_\lambda\theta(\bar{f})\subseteq \tae(F/1)$, this implies $ d_\lambda f_\lambda(\lambda) \in \tae(F/1)$, which means there exist $\tilde{\xi}\in\theta_{n+1/1}$ and $\tilde{\eta}\in\theta_{p+1/1}$ such that:
$$d_x\bar{f}(\tilde{\xi}) + \tilde{\eta}\circ F =  d_\lambda f_\lambda(\lambda)$$
Then, if $\xi(x,\lambda) = (-\tilde{\xi}(x,\lambda),\lambda)$ and $\eta(X,\Lambda) = (\tilde{\eta}(X,\Lambda),\Lambda)$, we have $dF\circ\xi = \eta\circ F$.

\end{proof}

Recall Mather's lemma (see Theorem 6.3 from \cite{nunomond}):

\begin{teo}\label{lemma_mather}
If a Lie group $G$ acts smoothly on a manifold $M$, and $W\subseteq M$ is a connected submanifold, a necessary and sufficient condition for $W$ to be contained in a single $G$-orbit is that:
\begin{enumerate}[noitemsep]
\item for all $x\in W$, $T_xW\subseteq T_x(Gx)$, and
\item the dimension of $T_x(Gx)$ is the same for all $x\in W$.
\end{enumerate}
\end{teo}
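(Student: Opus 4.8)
The final statement is the classical Mather Lemma; here is how I would prove it. The \emph{necessity} direction is immediate: if $W\subseteq Gx_0$ for a single orbit, then for every $x\in W$ we have $T_xW\subseteq T_x(Gx_0)=T_x(Gx)$, and $\dim T_x(Gx)$ is constant along $Gx_0$ (hence on $W$) because every orbit is a homogeneous space $G/G_x$, all of whose tangent spaces have the same dimension.

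For \emph{sufficiency}, the plan is to study the restricted action map $\beta\colon G\times W\to M$, $\beta(g,x)=gx$, and to show it has constant rank $r:=\dim T_x(Gx)$ (well defined by hypothesis (2)). First I would compute the image of $d\beta_{(g,x)}$: differentiating along a curve in $G$ through $g$ and a curve in $W$ through $x$ gives $\operatorname{im} d\beta_{(g,x)}=T_{gx}(Gx)+dL_g(T_xW)$, where $L_g\colon M\to M$ is the diffeomorphism $x'\mapsto gx'$ (the first summand because the orbit map $g'\mapsto g'x$ is a submersion onto $Gx\cong G/G_x$). By hypothesis (1), $T_xW\subseteq T_x(Gx)$, so $dL_g(T_xW)\subseteq dL_g\bigl(T_x(Gx)\bigr)=T_{gx}(Gx)$; hence $\operatorname{im} d\beta_{(g,x)}=T_{gx}(Gx)$, which has dimension $r$ since $gx$ lies on the orbit $Gx$ and tangent spaces to a fixed orbit have constant dimension. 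Thus $\beta$ has constant rank $r$ on all of $G\times W$.

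Next I would invoke the constant rank theorem: each point of $G\times W$ has a neighbourhood whose $\beta$-image is an embedded $r$-dimensional submanifold of $M$. Fix $x\in W$ and choose such a neighbourhood $U$ of $(e,x)$ with $\beta(U)=N$ an embedded $r$-submanifold through $x$. Then $\beta\bigl(U\cap(G\times\{x\})\bigr)$ is, after shrinking $U$, an embedded $r$-dimensional piece of the orbit $Gx$ through $x$; being an $r$-dimensional embedded submanifold contained in the $r$-dimensional $N$, it is open in $N$. On the other hand $\beta\bigl(U\cap(\{e\}\times W)\bigr)$ is a neighbourhood of $x$ in $W$ contained in $N$. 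Shrinking once more, a neighbourhood of $x$ in $W$ lies inside the open-in-$N$ piece of $Gx$, hence inside $Gx$. Consequently every orbit meets $W$ in an open subset, so the partition of $W$ into its intersections with the $G$-orbits is open; since $W$ is connected, there is only one block, i.e.\ $W$ is contained in a single orbit.

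The step I expect to require the most care is the localization in the last paragraph: orbits of a Lie group action are immersed but in general not embedded submanifolds, so one cannot naively speak of ``$Gx\cap N$'' being a manifold. The clean way around this is precisely to work through the constant rank theorem as above and to use the elementary fact that an $r$-dimensional immersed submanifold passing through a point of an $r$-dimensional embedded submanifold agrees, locally near that point, with an open subset of the latter; the remaining verifications (the computation of $\operatorname{im} d\beta_{(g,x)}$ and the constancy of $\dim T_y(Gy)$ along an orbit) are routine.
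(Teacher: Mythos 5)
The paper does not prove this result; it merely states it as Mather's Lemma and cites Theorem 6.3 of Mond and Nu\~no-Ballesteros's book, so there is no in-paper argument to compare against. Your proof is correct and is essentially the standard textbook argument: compute the rank of the restricted action map $\beta(g,x)=gx$ on $G\times W$, use hypothesis (1) to identify $\operatorname{im}d\beta_{(g,x)}$ with $T_{gx}(Gx)$, use hypothesis (2) to get constant rank, invoke the rank theorem to compare an embedded $r$-dimensional piece of the orbit with the embedded image $N=\beta(U)$, conclude that each orbit meets $W$ in an open set, and finish by connectedness. One small imprecision worth flagging: in your closing paragraph you state the auxiliary fact as ``an $r$-dimensional immersed submanifold passing through a point of an $r$-dimensional embedded submanifold agrees locally with an open subset of the latter,'' which is false without a containment hypothesis (the two could meet transversally). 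The version you actually use in the body of the argument --- an embedded $r$-dimensional piece of the orbit that is \emph{contained} in the embedded $r$-manifold $N$ is open in $N$ --- is the correct one, and the proof goes through.
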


We are now in the position to prove:

\begin{teo}\label{thm_all_phi}
If  $\theta(f)$ is finitely generated as an $\ofu_p$-module via $f$ and
all OPSUs of $f$ are substantial, then all of them are $\phi$-equivalent.
\end{teo}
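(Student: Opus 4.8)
The plan is to apply Mather's Lemma (Theorem \ref{lemma_mather}) to the action of the group $G^\phi$ on a suitable finite jet space, with the OPSUs of $f$ (put in the normal form of Equation \ref{eq_normal_opsu}) forming the submanifold $W$. By Proposition \ref{propo_normal_opsu} it is enough to show all OPSUs in that form are $\phi$-equivalent; there the vector fields $\gamma(x),s_1(f(x))\gamma(x),\dots,s_{k-1}(f(x))\gamma(x)$ are a fixed $\bbk$-basis of $\nae f$ (with $k=\aecod(f)$), so the only varying data is $q=(q_1,\dots,q_{k-1})\in\ofu_1^{\,k-1}$, and I would set
$$W=\Big\{\,\bar f_q(x,\lambda)=f(x)+\big(1+\textstyle\sum_{i=1}^{k-1}q_i(\lambda)s_i(f(x))\big)\lambda\gamma(x)\ :\ q\in\ofu_1^{\,k-1}\Big\}.$$
Each $\bar f_q$ will turn out to be $\phi$-finite, hence finitely $\phi$-determined with a determinacy degree bounded uniformly on $W$, so one may pass to $\ell$-jets for $\ell$ large; there the $q_i$ are polynomials of bounded degree, $W$ is an affine (in particular connected) submanifold of a finite-dimensional jet space, and $G^\phi$ descends to an algebraic group acting algebraically, exactly as in the standard treatment of these group actions in \cite{nunomond}.

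For condition (1) of Mather's Lemma: differentiating $\bar f_{q(t)}$ along a path $q(t)$ with $q(0)=q$ produces a tangent vector to $W$ at $\bar f_q$ of the form $\lambda\big(\sum_i\dot q_i(\lambda)s_i(f(x))\big)\gamma(x)\in\mfr_\lambda\theta(\bar f)$. Since all OPSUs of $f$ are substantial by hypothesis, Lemma \ref{lemma_mlambda} gives $\mfr_\lambda\theta(\bar f)\subseteq\tae(F_q/1)\subseteq TG_e^\phi\bar f_q$, hence $T_{\bar f_q}W\subseteq TG_e^\phi\bar f_q$, the tangent space to the $G^\phi$-orbit of $\bar f_q$. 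This is the one place where the substantiality hypothesis is used, and it is the conceptual core of the proof.

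For condition (2): it suffices to see that $\phicod(\bar f_q)$ is independent of $q$. Because $\mfr_\lambda\theta(\bar f)\subseteq TG_e^\phi\bar f_q$, the quotient $\theta(\bar f)/TG_e^\phi\bar f_q$ is isomorphic to $\theta(f)$ modulo the image of $TG_e^\phi\bar f_q$ under restriction to $\lambda=0$, using the identification $\theta(\bar f)/\mfr_\lambda\theta(\bar f)\cong\theta(f)$. Evaluating the three summands $t_\text{rel}F(\theta_{n+1/1})$, $t_1\bar f(\ofu_1)$, $\omega_\text{rel}F(\theta_{p+1/1})$ of $TG_e^\phi\bar f_q$ at $\lambda=0$, and using $q_i(0)=0$ so that $d_x\bar f_q|_{\lambda=0}=df$, $d_\lambda\bar f_q|_{\lambda=0}=\gamma$ and $\bar f_q|_{\lambda=0}=f$, the images are respectively $tf(\theta_n)$, $\bbk\gamma$ and $\omega f(\theta_p)$. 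So that image equals $\tae f+\bbk\gamma$ for every $q$, giving
$$\phicod(\bar f_q)=\dim_\bbk\frac{\theta(f)}{\tae f+\bbk\gamma}=\aecod(f)-1=k-1,$$
finite and independent of $q$; in particular the $G^\phi$-orbit tangent space has constant dimension on $W$.

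Mather's Lemma then forces $W$ into a single $G^\phi$-orbit, so all OPSUs of $f$ in normal form are $\phi$-equivalent, and Proposition \ref{propo_normal_opsu} promotes this to all OPSUs of $f$. The main obstacle I anticipate is not the algebra (conditions (1) and (2) above are short) but the jet-space bookkeeping: verifying that each $\bar f_q$ really is a stable unfolding, so that $W$ genuinely consists of OPSUs; that the $\phi$-determinacy degree is uniform on $W$; and that the $\ell$-jet of the $G^\phi$-orbit tangent space coincides with the $\ell$-jet of $TG_e^\phi\bar f_q$, so that conditions (1) and (2) carry their literal meaning in Mather's Lemma. The hypothesis that $\theta(f)$ is finitely generated as an $\ofu_p$-module via $f$ is precisely what Lemma \ref{lemma_mlambda} needs (it enters there through Nakayama's Lemma), so it propagates into this argument automatically.
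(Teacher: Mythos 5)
Your proposal is correct and follows essentially the same route as the paper: reduce to the normal form of Proposition \ref{propo_normal_opsu}, use Lemma \ref{lemma_mlambda} (substantiality $\Rightarrow \mfr_\lambda\theta(\bar f)\subseteq\tae(F/1)$) to verify condition (1) of Mather's Lemma, compute that $\phicod(\bar f_q)=\aecod(f)-1$ is constant, and apply Mather's Lemma in a suitable jet space after a uniform determinacy bound. The only cosmetic difference is that you apply Mather's Lemma once to the full affine family $W$ parametrized by $q\in\ofu_1^{k-1}$, whereas the paper fixes each $q$ and runs Mather's Lemma along the one-parameter segment $\bar f_a=f+\lambda\gamma+a\lambda\sum_i q_i s_i\gamma$ joining $F_0$ to $F_q$; both variants rest on identical ingredients and your restriction-to-$\lambda=0$ computation of the $\phi$-codimension is a valid repackaging of the paper's argument via Lemmas \ref{lemma_relative_normal} and \ref{lemma_mlambda}.
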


\begin{proof}

First, we check that all OPSUs in the form of Equation \ref{eq_normal_opsu} have the same $\phi_e$-codimension: since $TG_\phi^e\bar f = \tae (F/1) + t_1 \bar f(\ofu_1)$, we can use Lemmas \ref{lemma_relative_normal} and \ref{lemma_mlambda} to see that:
$$\frac{\theta(\bar{f})}{TG_\phi^e \bar f} = \operatorname{Sp}_{\bbk}\left\lbrace s_1(f(x))\gamma(x),\ldots,s_{k-1}(f(x))\gamma(x)\right\rbrace$$
Hence, they are all $d$-$\phi$-determined for some degree $d$ big enough.
For each combination of $q_i\in\ofu_1$, we can define the following family of map-germs:
$$F_a(x,\lambda) = (\bar f_a(x,\lambda),\lambda) = \left(f(x) + \lambda\gamma(x) + a\lambda\sum_{i=1}^{k-1}q_i(\lambda)s_i(f(x))\gamma(x),\lambda\right)$$
The family $\bar f_a$ defines a connected submanifold $M$ of the space of $d$-jets, $J^d(\bbk^{n+1},\bbk^p)$.
For each $a$, we have $T_{\bar f_a}M\subseteq TG_e^{\phi,(d)}\bar{f}_a$, where $TG_e^{\phi,(d)}\bar{f}_a$ is the $\phi$-tangent space to the $\phi$-orbit of $\bar{f}_a$ in $J^d(\bbk^{n+1},\bbk^p)$, consisting of $d$-jets of vector fields in $TG_\phi^e \bar f$.
Since the codimension of the $\phi$-tangent space is constant along $\bar f_a$, so will be the dimension of $TG_e^{\phi,(d)}\bar{f}_a$.
Hence, applying Mather's lemma \ref{lemma_mather}, all the $j^d\bar f_a$ lie on the same $\phi$-orbit on $J^d(\bbk^{n+1},\bbk^p)$.
Since all $\bar{f_a}$ are $d$-determined, this implies they all lie on the same $\phi$-orbit as  $\bar{f}_0(x,\lambda) = f(x) + \lambda\gamma(x)$.
This can be done for any combination of $q_i\in\ofu_1$, and so every OPSU in the form of Equation \ref{eq_normal_opsu} is $\phi$-equivalent to $F_0$.
In particular, by Proposition \ref{propo_normal_opsu} every OPSU is $\phi$-equivalent to $F_0$.
\end{proof}

We can join all the results from this section to give a condition for which all OPSUs of a given map-germ are $\phi$-equivalent:

\begin{teo}\label{teo_opsu_indep}
If $f$ admits an OPSU of the form $ (f(x) + \lambda\gamma(x),\lambda)$ which is both substantial and cross-substantial, then all OPSUs of $f$ are $\phi$-equivalent, and hence the choice of OPSU for augmenting $f$ does not affect the $\A$-class of the augmentation.
\end{teo}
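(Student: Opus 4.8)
The plan is to chain together the structural results established earlier, taking as starting point the distinguished OPSU $F_0(x,\lambda) = (f(x)+\lambda\gamma(x),\lambda)$, which by hypothesis is both substantial and cross-substantial. First I would exploit cross-substantiality: by Lemma \ref{propo_cross_condition}, cross-substantiality of $F_0$ implies $f_p(x)\gamma(x)\in\tae f$, so a $\bbk$-basis of $\nae f$ of the shape appearing in \eqref{eq_normal_opsu} can be chosen with the $s_i$ depending only on $f_1,\dots,f_{p-1}$. Proposition \ref{propo_weak_all_weak} then gives that every OPSU of $f$ is weak equivalent to $F_0$.

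Next I would propagate substantiality along this weak-equivalence class. Since $F_0$ is substantial and, by Proposition \ref{propo_weak_subs}, weak equivalence preserves substantiality, every OPSU of $f$ is substantial. To apply Theorem \ref{thm_all_phi} one also needs $\theta(f)$ to be finitely generated as an $\ofu_p$-module via $f$; this holds because $f$ admits a stable unfolding, hence is $\A$-finite and (in the relevant dimensions) finite, so $\ofu_n$ — and with it $\theta(f)\cong\ofu_n^{\,p}$ — is a finite $\ofu_p$-module through $f$ (alternatively one carries this finiteness as a standing assumption). Theorem \ref{thm_all_phi} now yields that all OPSUs of $f$ are mutually $\phi$-equivalent, indeed all $\phi$-equivalent to $F_0$.

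It then remains to transfer this to augmentations. Given two OPSUs $F,F'$ of $f$ and an augmenting function $g$ that is $\R$-equivalent to a quasi-homogeneous function — only the $\R$-class of $g$ being relevant, by Proposition \ref{prop_requiv_augm_aequiv} — I would apply Theorem \ref{thm_phi_aug} with both base germs taken to be $f$ itself: the $\phi$-equivalence of the families $f_\lambda$ and $f'_\lambda$ gives $A_{F,g}(f)\sim_\A A_{F',g}(f)$, so the $\A$-class of the augmentation does not depend on the choice of OPSU.

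Granting the earlier results, the steps above are essentially bookkeeping; the two points to watch are the finite-generation hypothesis of Theorem \ref{thm_all_phi} and the observation that the property which must survive passage along the weak-equivalence class is substantiality (Proposition \ref{propo_weak_subs}), with cross-substantiality invoked only once, to produce weak equivalence in the first place. If one instead wanted a self-contained argument, the genuinely hard ingredient would be Theorem \ref{thm_all_phi} itself, whose proof combines the Nakayama-type identification $\mfr_\lambda\theta(\bar f)\subseteq\tae(F/1)$ for substantial OPSUs (Lemma \ref{lemma_mlambda}) with a Mather's-lemma homotopy (Theorem \ref{lemma_mather}) connecting every normal-form OPSU back to $F_0$.
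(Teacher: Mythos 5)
Your proposal is correct and follows essentially the same chain as the paper's proof: cross-substantiality of $F_0$ yields weak equivalence of all OPSUs (via Lemma~\ref{propo_cross_condition} and Proposition~\ref{propo_weak_all_weak}, which the paper packages as the unnamed theorem just before \ref{teo_opsu_indep}), substantiality then propagates by Proposition~\ref{propo_weak_subs}, and Theorem~\ref{thm_all_phi} gives $\phi$-equivalence, after which Theorem~\ref{thm_phi_aug} transfers this to augmentations. You are a bit more scrupulous than the paper in flagging the finite-generation hypothesis of Theorem~\ref{thm_all_phi} and the (implicit) requirement that $g$ be $\R$-equivalent to a quasi-homogeneous function for the final ``hence'', but these are the same steps the paper is tacitly invoking.
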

\begin{proof}
By Proposition \ref{propo_weak_all_weak}, as this OPSU is cross-substantial, all OPSUs will be weak-equivalent.
Since one of them is substantial, by Proposition \ref{propo_weak_subs} all of them will be substantial.
Apply now Theorem \ref{thm_all_phi}.
\end{proof}

\begin{ex}\label{ex_cusps_cross}
All the map-germs in the family $f_k(y) = (y^2,y^{2k+1})$ for $k\geq 1$ admit the cross-substantial OPSU $F_k(y,\lambda) = (y^2,y^{2k+1} +\lambda y,\lambda)$, which is also substantial since it is quasi-homogeneous.
Hence, applying Theorem \ref{teo_opsu_indep}, augmentations of $f_k$ do not depend on the chosen OPSU.
\end{ex}

\section{Consequences and examples}\label{sec_examples}

We have seen in Theorem \ref{thm_phi_aug} that if two OPSUs are $\phi$-equivalent with $\phi(x,\lambda) = \lambda$, they will produce the same augmentation.
Hence, the number of $\phi$-equivalence classes of a given map-germ determines the possible  different augmentations that it can produce.

\begin{definition}
A map-germ $f$ is \textit{good for augmentation} if all its OPSUs lie on the same $\phi$-equivalence class.
\end{definition}

\begin{prop} \label{propo_good_augm}
If $f$ is good for augmentation and $g\sim_\A f$, then $g$ is good for augmentation.
\end{prop}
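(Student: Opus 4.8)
The plan is to exhibit a bijection between the OPSUs of $f$ and those of $g$ induced by the $\A$-equivalence, and to verify that it carries $\phi$-equivalence classes to $\phi$-equivalence classes; goodness for augmentation then transfers from $f$ to $g$ immediately. Write $g = \psi\circ f\circ\varphi^{-1}$ with $\varphi\in\text{Diff}_0(n)$, $\psi\in\text{Diff}_0(p)$, and let $\bar\varphi = \varphi\times\id$ and $\bar\psi = \psi\times\id$ denote the corresponding diffeomorphisms of $\bbk^n\times\bbk$ and $\bbk^p\times\bbk$, acting as the identity on the last coordinate.

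First I would check that $G\mapsto\bar\psi^{-1}\circ G\circ\bar\varphi$ maps OPSUs of $g$ to OPSUs of $f$, with inverse $F\mapsto\bar\psi\circ F\circ\bar\varphi^{-1}$. Indeed, if $G(x,\lambda) = (g_\lambda(x),\lambda)$ is an OPSU of $g$ then $\tilde F := \bar\psi^{-1}\circ G\circ\bar\varphi$ is $\A$-equivalent to $G$, hence stable; it has the form $(\tilde f_\lambda(x),\lambda)$ because $\bar\varphi,\bar\psi$ fix the last coordinate; and its restriction to $\lambda=0$ equals $\psi^{-1}\circ g\circ\varphi = f$, so it is a 1-parameter unfolding of $f$. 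The two assignments are mutually inverse, so they give a bijection between the OPSUs of $f$ and of $g$.

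Next I would show this bijection preserves $\phi$-equivalence. Suppose $F$ and $F'$ are $\phi$-equivalent OPSUs of $f$, realized by $(h,k)\in G_e^\phi$ with associated $l\in\text{Diff}_0(1)$, so that $h(x,\lambda) = (\bar h_\lambda(x),l(\lambda))$, $k(X,\Lambda) = (k_\Lambda(X),l(\Lambda))$ and $F'\circ h = k\circ F$. Put $G = \bar\psi\circ F\circ\bar\varphi^{-1}$, $G' = \bar\psi\circ F'\circ\bar\varphi^{-1}$, $h' = \bar\varphi\circ h\circ\bar\varphi^{-1}$ and $k' = \bar\psi\circ k\circ\bar\psi^{-1}$. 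A direct computation gives $G'\circ h' = k'\circ G$, and moreover $h'(x,\lambda) = \big(\varphi(\bar h_\lambda(\varphi^{-1}(x))),\,l(\lambda)\big)$ and $k'(X,\Lambda) = \big(\psi(k_\Lambda(\psi^{-1}(X))),\,l(\Lambda)\big)$; in particular both $h'$ and $k'$ have last component depending on the parameter only, through the very same reparametrization $l$. Hence $(h',k')\in G_e^\phi$, so $G$ and $G'$ are $\phi$-equivalent.

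Finally, given two OPSUs $G_1,G_2$ of $g$, the corresponding OPSUs $\tilde F_i = \bar\psi^{-1}\circ G_i\circ\bar\varphi$ of $f$ are $\phi$-equivalent because $f$ is good for augmentation; the previous step, applied to $\tilde F_1$ and $\tilde F_2$, shows that $\bar\psi\circ\tilde F_1\circ\bar\varphi^{-1} = G_1$ and $\bar\psi\circ\tilde F_2\circ\bar\varphi^{-1} = G_2$ are $\phi$-equivalent. Therefore all OPSUs of $g$ lie in a single $\phi$-equivalence class, i.e.\ $g$ is good for augmentation. The argument is essentially bookkeeping; the only point needing care is checking that conjugation by $\bar\varphi$ and $\bar\psi$ keeps the pair $(h,k)$ inside $G_e^\phi$, which is precisely where one uses that $\bar\varphi$ and $\bar\psi$ act as the identity on the last coordinate.
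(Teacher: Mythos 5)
Your proof is correct and follows essentially the same approach as the paper: conjugate OPSUs by the given $\A$-equivalence extended trivially in the parameter direction, observe that these trivial extensions are themselves $\phi$-equivalence diffeomorphisms, and transfer $\phi$-equivalence between the OPSUs of $f$ and of $g$. You spell out the conjugation of the witness pair $(h,k)$ more explicitly than the paper's terse three-line argument, but the underlying mechanism is identical.
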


\begin{proof}
Let $h\colon(\bbk^n,0)\to(\bbk^n,0)$ and $k\colon(\bbk^p,0)\to(\bbk^p,0)$ be two germs of diffeomorphism such that $f = k\circ g\circ h$.
Define $H(x,\lambda) = (h(x),\lambda)$ and $K(X,\Lambda) = (k(X),\Lambda)$, which are $\phi$-equivalence diffeomorphisms.
Given $G,G'$ any two OPSUs of $g$, we have that both $K\circ G\circ H$ and $K\circ G'\circ H$ are OPSUs of $f$: hence, they are $\phi$-equivalent.
\end{proof}

\begin{prop}
If $f\colon(\bbk^n,0)\to(\bbk^p,0)$ is good for augmentation, then the $\A$-class of the augmentation only depends on  the $\R$-class of the augmenting function and the $\A$-class of $f$.
\end{prop}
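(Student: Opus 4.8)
The plan is to prove the apparently stronger statement: if $f\sim_\A f'$, if $F$ and $F'$ are OPSUs of $f$ and $f'$ respectively, if $g\sim_\R g'$, and if $f$ is good for augmentation, then $A_{F,g}(f)\sim_\A A_{F',g'}(f')$. I would organise this as three successive reductions --- first of the augmenting function, then of the base germ, then of the chosen OPSU.

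The first reduction is immediate: with the OPSU $F'$ held fixed, Proposition \ref{prop_requiv_augm_aequiv} together with $g'\sim_\R g$ gives $A_{F',g'}(f')\sim_\A A_{F',g}(f')$. For the second, I would pick $h\in\text{Diff}_0(n)$ and $k\in\text{Diff}_0(p)$ with $f=k\circ f'\circ h$ and set $\hat F=(k\times\id_1)\circ F'\circ(h\times\id_1)$. Evaluating at $\lambda=0$ shows that $\hat F$ is an unfolding of $f$, and it is stable, being $\A$-equivalent to $F'$; hence $\hat F$ is an OPSU of $f$, of the form $(\hat f_\lambda(x),\lambda)$ with $\hat f_\lambda=k\circ f'_\lambda\circ h$. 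Unwinding the definition of augmentation then gives $A_{\hat F,g}(f)=(k\times\id_d)\circ A_{F',g}(f')\circ(h\times\id_d)$, hence $A_{\hat F,g}(f)\sim_\A A_{F',g}(f')$.

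Now $F$ and $\hat F$ are both OPSUs of the same germ $f$. Since $f$ is good for augmentation they lie in a single $\phi$-equivalence class (with $\phi(x,\lambda)=\lambda$), so Theorem \ref{thm_phi_aug}, applied with $f$ as the common base germ of $F$ and $\hat F$, gives $A_{F,g}(f)\sim_\A A_{\hat F,g}(f)$. Concatenating
$$A_{F',g'}(f')\sim_\A A_{F',g}(f')\sim_\A A_{\hat F,g}(f)\sim_\A A_{F,g}(f)$$
yields the claim, and in particular shows that the $\A$-class of the augmentation is a function only of the $\R$-class of $g$ and the $\A$-class of $f$.

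The first two reductions are routine --- they use only the definition of $A_{F,g}(f)$ and a direct conjugation, with no hypothesis beyond $f\sim_\A f'$ and $g\sim_\R g'$ --- so I expect the substantive step to be the third, namely the comparison of the two OPSUs $F$ and $\hat F$ of $f$; this is precisely where ``good for augmentation'' enters, through Theorem \ref{thm_phi_aug}. The one delicate point inside that step is that a $\phi$-equivalence of OPSUs in general reparametrizes the unfolding variable by a germ of diffeomorphism $l$ of $(\bbk,0)$, replacing $g$ by $l\circ g$; this is absorbed using Propositions \ref{prop_tak_requiv} and \ref{prop_requiv_augm_aequiv} exactly as in the proof of Theorem \ref{thm_phi_aug}, which is why the argument runs most cleanly when $g$ lies in the class considered there.
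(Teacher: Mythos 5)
Your proof is correct and follows essentially the same route as the paper: reduce the augmenting function via Proposition \ref{prop_requiv_augm_aequiv}, conjugate to pass between $\A$-equivalent base germs, and invoke the ``good for augmentation'' hypothesis together with Theorem \ref{thm_phi_aug} to absorb the choice of OPSU. The only cosmetic difference is that you conjugate the OPSU $F'$ into $\hat F$ and then compare two OPSUs of $f$, while the paper conjugates the finished augmentation of $g$ directly and uses Proposition \ref{propo_good_augm} to transfer the property to $g$; you are in fact slightly more careful than the paper in flagging that Theorem \ref{thm_phi_aug} tacitly requires $g$ to be $\R$-equivalent to a quasi-homogeneous function, a hypothesis the proposition's statement omits but which both proofs rely on.
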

\begin{proof}
The fact that all augmentations of a given $f$ do not depend on the choice of the OPSU is given by Theorem \ref{thm_phi_aug}, and the fact that the choice of augmenting function only affects up to $\R$-equivalence comes from Proposition \ref{prop_requiv_augm_aequiv}.

Given any $g\colon(\bbk^n,0)\to(\bbk^p,0)$ in the $\A$-equivalence class of $f$, let $h,k$ be diffeomorphisms such that  $f = k\circ g\circ h$, and define $H(x,\lambda) = (h(x),\lambda)$ and $K(X,\Lambda) = (k(X),\Lambda)$.
By Proposition \ref{propo_good_augm}, $g$ is also good for augmentation.
Denote by $A_sg$ a representative in the $\A$-class of the augmentation of $g$ via a function $s\in\ofu_d$.
Then, $K\circ A_sg\circ H$ is the augmentation of $f$ via $s$.
\end{proof}

\begin{coro}\label{coro_subs_cross_simple}
If $\mononp f$ is simple, $F(x,\lambda) = (f(x)+\lambda\gamma(x),\lambda)$ is a substantial and cross-substantial OPSU and $g\in\ofu_d$ is simple, and all deformations of $A_{F,g}(f)$ are augmentations of $f$ via deformations of $g$, then $A_{F,g}(f)$ is simple.
\end{coro}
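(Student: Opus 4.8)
The plan is to obtain the corollary by assembling Theorem~\ref{teo_opsu_indep} and Theorem~\ref{thm_suf_simple}: the substance of the statement lies entirely in those two results, and what remains is to check that their hypotheses are met.

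First I would apply Theorem~\ref{teo_opsu_indep} to the OPSU $F(x,\lambda) = (f(x)+\lambda\gamma(x),\lambda)$. Since $F$ is assumed to be both substantial and cross-substantial, that theorem gives that every OPSU of $f$ is $\phi$-equivalent to $F$; in particular all OPSUs of $f$ lie in a single $\phi$-equivalence class, so condition~(3) of Theorem~\ref{thm_suf_simple} (finitely many $\phi$-equivalence classes of OPSUs of $f$) holds. Conditions~(1) and~(2) of Theorem~\ref{thm_suf_simple}, namely simplicity of $f$ and of $g$, are hypotheses of the corollary. For condition~(4), note that the assumption ``every deformation of $A_{F,g}(f)$ is an augmentation of $f$ via a deformation of $g$'' is a special case of ``every deformation of $A_{F,g}(f)$ is the augmentation of some deformation of $f$ via some deformation of $g$'', since $f$ is in particular a deformation of itself. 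Thus all four hypotheses of Theorem~\ref{thm_suf_simple} are verified, and that theorem yields that $A_{F,g}(f)$ is simple.

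I do not expect a real obstacle here, since the technical work has been carried out earlier: Propositions~\ref{propo_weak_all_weak} and~\ref{propo_weak_subs} promote the substantiality and cross-substantiality of the single OPSU $F$ to substantiality of all OPSUs, Theorem~\ref{thm_all_phi} then forces them into one $\phi$-class, and Theorem~\ref{thm_suf_simple} packages the counting argument. The only nuance to flag is the identification of the corollary's hypothesis with condition~(4); if instead one only has the weaker condition~(4') of Remark~\ref{F4simple}, the same argument with Remark~\ref{F4simple} replacing condition~(4) still delivers simplicity of $A_{F,g}(f)$.
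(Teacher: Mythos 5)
Your proof is correct and follows exactly the paper's own route: Theorem~\ref{teo_opsu_indep} gives that $f$ is good for augmentation, and then Theorem~\ref{thm_suf_simple} is invoked. You simply spell out the hypothesis-checking in more detail (including the correct observation that the corollary's assumption is stronger than, and hence implies, condition~(4) of Theorem~\ref{thm_suf_simple}), but the argument is the same.
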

\begin{proof}
By Theorem \ref{teo_opsu_indep}, $f$ is good for augmentation, and so we can apply Theorem \ref{thm_suf_simple} to ensure that the resulting augmentation is simple.
\end{proof}

Lastly, we give a list of examples with different properties in which we can apply the results developed throughout the paper.

From \cite{augcod1morse}, we know that if the augmenting function is simple and the augmented map-germ has $\A_e$-codimension 1, then the augmentation will be simple.
Similarly, if the map-germ is simple and the augmenting function is a Morse function, the augmentation will be simple.
Moreover, if either the map-germ or the augmenting function are not simple, then the augmentation will also fail to be simple.

\begin{ex}
Let $f\colon(\bbk,0)\to(\bbk^2,0)$ be the curve $f(y) = (y^2,y^5)$.
As we saw in Examples \ref{ex_substantial} and \ref{ex_cross} above, the OPSU $F(y,\lambda) = (y^2, y^5 + \lambda y)$ is substantial and cross-substantial: hence by Propositions \ref{propo_weak_all_weak}, \ref{propo_weak_subs} and Theorem \ref{thm_all_phi}, $f$ is good for augmentation and we do not need to consider any other OPSU.

This is a simple map-germ with $\aecod(f) = 2$, and when augmented via the function $g(z) = z^3$ with $\tau(g) = 2$, we obtain the map-germ $F_4$ from \cite{mond2en3}: it is a simple map-germ, yet the codimensions lie outside of the scope of the theorems mentioned from \cite{augcod1morse}.
In the versal unfolding of $F_4$ we can find singularities of type $C_3\equiv(x,y^2,xy^3+x^3y)$, which we have seen are not augmentations, so we cannot use Corollary \ref{coro_subs_cross_simple} either. However, by Remark \ref{F4simple}, since the $C_3$ singularities are simple, $F_4$ is simple.

Augmenting $f$ via the function $z^4$, which is simple and has Tjurina number equal to 3, yields a map-germ which we will call $F_6\equiv\left( x,y^2, y^5 + x^4y\right)$.
In its versal unfolding, besides simple, non-augmentation singularities of type $C_3$ and $C_4$, we can find the family:
$$F_6^a(x,y) = \left(x,y^2,y(y^4 + ax^2y^2 + x^4)\right)$$
We can use Theorem \ref{thm_augcar_fix} to check that these are not augmentations and the classification in \cite{mond2en3} to see that $F_6^a$ is non-simple for each $a\in\bbk$, which implies that $F_6$ is not simple, i.e. it is not equivalent to any germ in Mond's classification.
\end{ex}

\begin{ex}\label{115z3}
We saw in Example \ref{ex_substantial} that the map-germ $11_5\equiv(x,y^4+x^2y + xy^2)$ from \cite{rieger2to2} does not admit any substantial OPSU, since all of them are weak equivalent and it admits a non-substantial OPSU.

This map-germ of $A_e$-codimension $2$ is not an augmentation and it can be augmented via $z^3$ to the non-simple map-germ $A_{F,z^3}(11_5)(x,y,z) = (x,y^4+x^2y + xy^2 +z^3y,z)$ (see \cite{marartari}).
It has $\A_e$-codimension 4, and all the singularities that appear in its bifurcation diagram are augmentations of Beaks, Lips, Swallowtails and $11_5$ (all of which appear in the bifurcation diagram of $11_5$), except for the singularities in the strata:
$$F_a(x,y,z) = (x,y^4+x^2y + xy^2 +z^3y + azxy,z)$$
which, applying Theorem \ref{thm_augcar_fix}, are not augmentations. Then, by Corollary \ref{lackofsimplicity}, $a$ is the only modal parameter in the versal unfolding of $A_{F,z^3}(11_5)$.
\end{ex}


\begin{ex}\label{52t3}

The map-germ $5_2\equiv (x,y,z^5 + xz + y^2z^2 + yz^3)$, from Marar and Tari's list of simple map-germs from $\bbc^3\to\bbc^3$ (\cite{marartari}), is not quasihomogeneous, but admits an OPSU $F(x,y,z,\lambda) = (x,y,z^5 + xz + y^2z^2 + yz^3 + \lambda z^2,\lambda)$.
$F$ is not substantial, but it is cross-substantial: hence, by Propositions \ref{propo_cross_condition}, \ref{propo_weak_subs}, none of its OPSUs are substantial. This means we do not know if there are a finite number of $\phi$-equivalence classes of OPSUs.

We can consider the augmentation of $5_2$ by $t^3$, $(x,y,z^5 + xz + y^2z^2 + yz^3 + t^3z^2,t)$ which has $\mathscr A_e$-codimension 4. A versal unfolding is given by $(x,y,z^5+ xz + y^2z^2 + yz^3 + t^3z^2+\lambda_1z^2+\lambda_2z^3+\lambda_3tz^2+\lambda_4tyz^2,t,\lambda)$. We know by \ref{lackofsimplicity} that if it is not simple, the modality must lie in the stratum $\lambda_1=\lambda_2=\lambda_3=0$. In fact, calculations in SINGULAR using Damon's theorem from \cite{damonakv} show that $(x,y,z^5+ xz + y^2z^2 + yz^3 + t^3z^2+atyz^2,t)$ has $\mathscr A_e$-codimension 4 for any $a$, and so $A_{F,t^3}(5_2)$ is not simple.

\end{ex}

In \cite{augcod1morse} we conjectured that all corank 1 simple augmentations from $\bbc^4$ to $\bbc^4$ are the ones given in Table \ref{table_44}. These are the only possible augmentations of $\mathscr A_e$-codimension 1 germs and by Morse functions. It remained to see what happened when augmenting higher codimension germs by non-Morse functions. From Example \ref{115z3}, we can deduce that $A_{F,z^3+t^2}(11_5)(x,y,z,t)$ is not simple and, by adjacencies, $A_{F,R(z,t)}(11_{2k+1})(x,y,z,t)$ is not simple for any $k\geq 2$ if $R$ is an $A_2$ singularity or worse (non-Morse). On the other hand, from Example \ref{52t3}, $A_{F,t^k}(5_2)(x,y,z,t)$ is not simple for $k\geq 3$, and, since $5_3$ is adjacent to $5_2$, $A_{F,t^k}(5_3)(x,y,z,t)$ is not simple for $k\geq 3$. This proves the following

\begin{teo}
The classification of corank 1 simple augmentations from $\bbc^4$ to $\bbc^4$ is given in Table \ref{table_44}.
\bgroup
\def\arraystretch{1.2}%
\begin{table}[h]
\begin{tabular}{clcl}
\hline
Type    & \multicolumn{1}{c}{Normal Form}    & \multicolumn{1}{c}{$\A_e$-codimension} &            \\ \hline
$3_{P}$ & $(x,y,z,t^3+P(x,y,z)t)$            & $\mu(P)$                               & \\
$4_Q$   & $(x,y,z,t^4+xt+Q(y,z)t^2)$         & $\mu(Q)$                               &            \\
$4_k^2$ & $(x,y,z,t^4+(x^k+y^2+z^2)t+xt^2)$   & $k$                                    & $k\geq 2$  \\
$5_k$   & $(x,y,z,t^5+xt+yt^2+z^kt^3)$       & $k-1$                                  & $k \geq 1$ \\
$5^2$   & $(x,y,z,t^5+xt+(y^2+z^2)t^2+yt^3)$ & $2$                                    &            \\
$5^3$   & $(x,y,z,t^5+xt+z^2t^2+yt^3)$       & $3$                                    &            \\ \hline
\end{tabular}
\medskip
\caption{$P,Q$ are of type $A_k,D_k,E_6,E_7,E_8$.}
\label{table_44}
\end{table}
\egroup
\end{teo}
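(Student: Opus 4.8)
The plan is to establish the two inclusions separately: that each germ of Table~\ref{table_44} is a corank-$1$ simple augmentation $(\bbc^4,0)\to(\bbc^4,0)$, and conversely that every such augmentation occurs in the table.

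For the first inclusion I would realize each family explicitly as an augmentation $A_{F,g}(f)$ and invoke the two sufficient conditions for simplicity from \cite{augcod1morse}. The families $3_P$, $4_Q$, $5_k$ are the augmentations of the $\A_e$-codimension-$1$ germs $t\mapsto t^3$, $(x,t^4+xt)$, $(x,y,t^5+xt+yt^2)$ by the augmenting functions $P$, $Q$, $z^k$; these functions are simple, hence quasi-homogeneous, so by \cite{augcod1morse} the augmentation is simple, and by Theorem~\ref{thm_ineq_iff} $\aecod(A_{F,g}(f))=\tau$ of the augmenting function, which gives the codimensions in the table. The families $4_k^2$, $5^2$, $5^3$ are augmentations by Morse functions of simple germs from \cite{marartari}, namely a member of $\A_e$-codimension $k$ of one of their families, and $5_2$, $5_3$; by \cite{augcod1morse} these are simple, with $\aecod$ equal to that of the base, again by Theorem~\ref{thm_ineq_iff}. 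Matching these explicit augmentations to the normal forms in the table is a routine computation.

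For the converse, let $h=A_{F,g}(f)$ be a corank-$1$ simple augmentation into $(\bbc^4,0)\to(\bbc^4,0)$, with $f\colon(\bbc^k,0)\to(\bbc^k,0)$ admitting an OPSU and $g\colon(\bbc^{4-k},0)\to(\bbc,0)$, $1\le k\le 3$. By \cite{augcod1morse}, non-simplicity of $f$ or of $g$ forces non-simplicity of $h$; so $f$ and $g$ are both simple, and $f$ is one of the corank-$1$ simple equidimensional germs admitting an OPSU. For $k=1$ this forces $f=(t\mapsto t^3)$; for $k=2,3$ one extracts the relevant $f$ from Rieger's \cite{rieger2to2} and Marar--Tari's \cite{marartari} classifications, keeping only the germs admitting a $1$-parameter stable unfolding. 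I would then split by cases. If $\aecod(f)=1$ the augmentation is automatically simple, and running over the finitely many such $f$ and all simple $g$ in the complementary number of variables should reproduce exactly $3_P$, $4_Q$, $5_k$, once the $\A$-equivalences among the resulting normal forms are identified. If $g$ is Morse the augmentation is again automatically simple, and running over all simple $f$ with an OPSU should add precisely $4_k^2$, $5^2$, $5^3$. The remaining case is $\aecod(f)\ge 2$ with $g$ non-Morse, where I would show that $h$ is never simple.

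For $k=2$ the germs $f$ with $\aecod(f)\ge 2$ admitting an OPSU are Rieger's $11_{2j+1}$, and for $k=3$ they are $5_2$, $5_3$ together with the Marar--Tari germs of $\A_e$-codimension at least $2$ in the families above. Starting from the non-simplicity of $A_{F,z^3}(11_5)$ (Example~\ref{115z3}) and of $A_{F,t^3}(5_2)$ (Example~\ref{52t3}), I would propagate non-simplicity using two facts: augmenting a non-simple germ by a Morse function yields a non-simple germ, and non-simplicity is inherited along deformations, since $A_{F',g'}(f')$ has $A_{F,g}(f)$ among its deformations whenever $f'$ deforms to $f$ and $g'$ to $g$. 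The adjacencies of $11_{2j+1}$ to $11_5$, of $5_j$ to $5_2$, and of a non-Morse $g$ to an $A_2$ singularity, together with the observation that the augmentations by non-Morse functions of the Marar--Tari germs of $\A_e$-codimension $k$ are themselves Morse augmentations of the non-simple germs above, should then cover all remaining $(f,g)$. The main obstacle is exactly this last case and the bookkeeping around it: one must verify that Rieger's and Marar--Tari's lists have been correctly restricted to the germs admitting an OPSU, that the $\aecod(f)=1$ and Morse enumerations yield the six families and nothing more (which requires recognizing several $\A$-equivalences of normal forms), and, most delicately, that the two non-simplicity computations of Examples~\ref{115z3} and~\ref{52t3}, propagated along adjacencies, genuinely exhaust every augmentation with $\aecod(f)\ge 2$ and $g$ non-Morse landing in the dimensions $(4,4)$.
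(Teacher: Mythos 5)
Your proposal follows essentially the same route as the paper. The paper's own argument is exactly: (i) the table is known from \cite{augcod1morse} to list every augmentation of an $\A_e$-codimension-$1$ germ and every augmentation by a Morse function, all of which are simple; (ii) the only remaining case is augmenting a higher-codimension germ by a non-Morse function, and the paper disposes of this by citing Example~\ref{115z3} (non-simplicity of $A_{F,z^3}(11_5)$, suspended by $t^2$ to reach $(4,4)$) and Example~\ref{52t3} (non-simplicity of $A_{F,t^3}(5_2)$), then propagating non-simplicity along the adjacencies $11_{2k+1}\to 11_5$, $5_3\to 5_2$, and $R\to A_2$. Your outline identifies the same two sufficient criteria from \cite{augcod1morse}, the same case split on $\aecod(f)$ and on $g$ Morse vs.\ non-Morse, and the same two examples plus adjacency propagation; your extra bookkeeping (matching normal forms, trimming Rieger's and Marar--Tari's lists to germs admitting an OPSU) is exactly what the paper leaves implicit by deferring to \cite{augcod1morse}. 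The caveat you raise at the end — that one must check the two computed non-simplicity seeds, together with the Morse-augmentation and adjacency moves, really do reach every pair $(f,g)$ with $\aecod(f)\ge 2$ and $g$ non-Morse — is a legitimate concern, but it is shared with the published argument rather than a defect of your reconstruction.
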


\printbibliography

\end{document}